\newtheorem{theorem}{Theorem}[section]
\newtheorem{definition}[theorem]{Definition}
\newtheorem{proposition}[theorem]{Proposition}
\newtheorem{lemma}[theorem]{Lemma}
\begin{document}

\title{Random walk questions for linear quantum groups}

\author{Teodor Banica}
\address{T.B.: Department of Mathematics, Cergy-Pontoise University, 95000 Cergy-Pontoise, France. {\tt teodor.banica@u-cergy.fr}}

\author{Julien Bichon}
\address{J.B.: Department of Mathematics, Clermont-Ferrand University, F-63177 Aubiere, France. {\tt bichon@math.univ-bpclermont.fr}}

\subjclass[2000]{46L65 (46L54)}
\keywords{Quantum group, Random walk, Freeness}

\begin{abstract}
We study the discrete quantum groups $\Gamma$ whose group algebra has an inner faithful representation of type $\pi:C^*(\Gamma)\to M_K(\mathbb C)$. Such a representation can be thought of as coming from an embedding $\Gamma\subset U_K$. Our main result, concerning a certain class of examples of such quantum groups, is an asymptotic convergence theorem for the random walk on $\Gamma$. The proof uses various algebraic and probabilistic techniques.
\end{abstract}

\maketitle

\section*{Introduction}

A discrete quantum group $\Gamma$ is dual to a compact quantum group $G$, and vice versa. Of particular interest is the case where $\Gamma$ is finitely generated, which corresponds to the case where $G$ is a matrix quantum group. The associated unital Hopf $C^*$-algebras $A=C^*(\Gamma)=C(G)$ were axiomatized by Woronowicz in \cite{wo1}, \cite{wo2}.

The algebra $A$ has a Haar functional, $\int:A\to\mathbb C$, and possesses a certain distinguished element $\chi=Tr(u)\in A$, and one interesting problem is that of computing the probabilistic distribution of $\chi\in(A,\int)$. There are two motivations for this problem:
\begin{enumerate}
\item Random walks. For a discrete group $\Gamma=<g_1,\ldots,g_N>$ we have $\chi=g_1+\ldots+g_N$, whose moments are the numbers $c_p=\#\{i_1,\ldots,i_p|g_{i_1}\ldots g_{i_p}=1\}$.

\item Representation theory. For a compact group $G\subset_u U_N$ we have $\chi(g)=Tr(u(g))$, whose moments are the numbers $c_p=\dim(Fix(u^{\otimes p}))$.
\end{enumerate}

In general, understanding the structure of $A$, and computing the law of $\chi=Tr(u)$, are non-trivial questions. The available methods here fall into two main classes:
\begin{enumerate}
\item Category theory. When the relations between the standard coordinates $u_{ij}\in A$ are known, by definition or computation, Tannakian methods apply.

\item Matrix models. Here the idea is to search for models for the variables $u_{ij}\in A$. Once such a model found, matrix analysis gives us information about $\chi$.
\end{enumerate}

The first idea is well-known, going back to old work of Brauer \cite{bra}, when $G$ is classical, and to old work of Kesten \cite{kes}, when $\Gamma$ is classical. In the quantum group context, this idea has been heavily developed, starting with \cite{wo2}. See \cite{bsp}, \cite{fre}, \cite{rwe}.

The second idea, while having a big potential as well, is more of an ``underground'' one. Only some general algebraic theory is available here (\cite{abi}, \cite{ban}, \cite{bb2}, \cite{bfs}, \cite{chi}, \cite{end}), and there is still a lot of work to be done, in order for this idea to really ``take off''.

We will do here some work in this direction. First of all, a matrix model for $A$ will be by definition an inner faithful representation of type $\pi:A\to M_K(\mathbb C)$. The existence of such a representation is a linearity type condition on $\Gamma$, because in the classical case, the representation must come from a group embedding $\Gamma\subset U_K$. 

The simplest models are those coming from the Fourier representations $Z\subset U_{|Z|}$ of the finite abelian groups $Z$. Assuming now that we are in a product situation, $Z=X\times Y$, the corresponding matrix model can be twisted by a parameter belonging to a torus, $Q\in\mathbb T^Z$, and produces in this way a certain quantum group $G_Q$. We will study here $G_Q$, and the random walk on the dual quantum group $\Gamma_Q$, our main result being:

\medskip

\noindent {\bf Theorem.} {\em When $Q\in\mathbb T^Z$ is generic, with $|X|=\alpha K,|Y|=\beta K$, $K\to\infty$ we have
$$law(\chi)=\left(1-\frac{1}{\alpha\beta K^2}\right)\delta_0+\frac{1}{\alpha\beta K^2}D_{\frac{1}{\beta K}}(\pi_{\alpha/\beta})$$
where $\pi_t$ is the Marchenko-Pastur (or free Poisson) law of parameter $t$.}

\medskip

The proof uses various algebraic and probabilistic techniques, notably Hopf algebra methods from \cite{ade}, \cite{bb2}, \cite{bic}, \cite{sch} and free probability theory from \cite{bia}, \cite{mpa}, \cite{nsp}, \cite{vdn}.

Generally speaking, the situation that we have, with quantum groups $G_Q$  which are undeformed ($S^2=id$) and which depend critically on the arithmetics of $Q\in\mathbb T^Z$, is of course quite beautiful, and reminds a bit the theory of the algebras $U_q(\mathfrak g)$ at $|q|=1$, coming from \cite{dri}, \cite{jim}. At $|Z|=4$ the arithmetic specializations can be computed by using methods from \cite{bb1}. In general, this remains to be explored.

From a more applied point of view now, the main raison d'\^etre of the compact quantum groups is that of acting on noncommutative manifolds coming from quantum physics, cf. \cite{bdd}, \cite{con}. There are many interesting questions here, regarding the potential applications of the matrix model techniques. One problem is that of unifying the present results with those in \cite{bco}, and then trying to investigate more specialized models.

The paper is organized as follows: 1 is a preliminary section, in 2-3-4 we develop a number of algebraic methods for computing the quantum groups associated to the above matrix models, and in 5 we state and prove the random walk results.

\medskip

\noindent {\bf Acknowledgements.} The work of TB was partly supported by the ``Harmonia'' NCN grant 2012/06/M/ST1/00169. 

\section{Matrix models}

We fix a Hopf algebra $A=C^*(\Gamma)=C(G)$, satisfying Woronowicz's axioms in \cite{wo1}, \cite{wo2}. We assume in addition that the square of the antipode is the identity, $S^2=id$. By \cite{wo1}, this is the same as assuming that the Haar integration functional $\int:A\to\mathbb C$ has the trace property $\int ab=\int ba$. In short, we use the ``minimal'' framework covering the finitely generated groups, $\Gamma=<g_1,\ldots,g_N>$, and the compact Lie groups, $G\subset U_N$.

The axioms are in fact very simple, as follows:

\begin{definition}
A unitary Hopf algebra is a pair $(A,u)$ formed by a $C^*$-algebra $A$, and a unitary matrix $u\in M_N(A)$ whose transpose $u^t$ is unitary too, such that:
\begin{enumerate}
\item The formula $\Delta(u_{ij})=\sum_ku_{ik}\otimes u_{kj}$ defines a morphism $\Delta:A\to A\otimes A$.

\item The formula $\varepsilon(u_{ij})=\delta_{ij}$ defines a morphism $\varepsilon:A\to\mathbb C$.

\item The formula $S(u_{ij})=u_{ji}^*$ defines a morphism $S:A\to A^{op}$.
\end{enumerate}
We write $A=C^*(\Gamma)=C(G)$, and call $\Gamma,G$ the underlying quantum groups.
\end{definition}

At the level of basic examples, given a finitely generated group $\Gamma=<g_1,\ldots,g_N>$, we have the group algebra $A=C^*(\Gamma)$, with $u=diag(g_1,\ldots,g_N)$. Also, given a compact Lie group $G\subset U_N$, we have the algebra $A=C(G)$, with $u_{ij}(g)=g_{ij}$. See \cite{wo1}, \cite{wo2}.

We refer to the recent book \cite{ntu} for a detailed presentation of the theory.

Let us explain now what we mean by matrix model for $A$. We can of course consider embeddings of type $A\subset M_K(\mathbb C)$, with $K<\infty$, but this is not very interesting, because it can only cover the finite dimensional case. Observe that such an embedding always exists at $K=\infty$, by the GNS theorem, but this is just a theoretical result.

The answer comes from the notion of inner faithfulness, introduced in \cite{bb2}:

\begin{definition}
Let $\pi:A\to R$ be a $C^*$-algebra representation. 
\begin{enumerate}
\item The Hopf image of $\pi$ is the smallest quotient Hopf algebra $A\to A'$ producing a factorization of type $\pi:A\to A'\to R$.

\item When $A=A'$, we say that $\pi$ is inner faithful. That is, we call $\pi:A\to R$ inner faithful when there is no factorization $\pi:A\to A'\to R$.
\end{enumerate}
\end{definition}

Here the existence of $A'$ as in (1) comes from standard Hopf algebra theory. See \cite{bb2}.

As a basic example, when $\Gamma$ is a classical group, the representation $\pi$ must come from a unitary group representation $\Gamma\to U_R$, and the factorization in (1) is simply the one obtained by taking the image, $\Gamma\to\Gamma'\subset U_R$. Thus $\pi$ is inner faithful when $\Gamma\subset U_R$.

Also, given a compact group $G$, and elements $g_1,\ldots,g_K\in G$, we can consider the representation $\pi=\oplus_iev_{g_i}:C(G)\to\mathbb C^K$. The minimal factorization of $\pi$ is then via $C(G')$, with $G'=\overline{<g_1,\ldots,g_K>}$. Thus $\pi$ is inner faithful when $G=\overline{<g_1,\ldots,g_K>}$.

Finally, observe that the representation $A'\to R$ constructed in Definition 1.2 (1) is inner faithful. Thus, we have many other potential examples. See \cite{bb2}, \cite{chi}.

Now back to the matrix model problematics, we can formulate:

\begin{definition}
A matrix model for $A$ is a $C^*$-algebra representation
$$\pi:A\to M_K(\mathbb C)$$
which is inner faithful in the sense of Definition 1.2.
\end{definition}

When the underlying discrete quantum group $\Gamma$ is classical, such a model must come from a group embedding $\Gamma\subset U_K$. At the group dual level, given a compact group $G$, and elements $g_1,\ldots,g_K\in G$, we can consider the representation $\pi:C(G)\to M_K(\mathbb C)$ given by $\pi(\varphi)=diag(\varphi(g_i))$. By the above, $\pi$ is a matrix model when $G=\overline{<g_1,\ldots,g_K>}$.

Further examples include the fibers of the Pauli matrix representation of $A=C(S_4^+)$, studied in \cite{bb1}, \cite{bco}, \cite{end}. When dropping the assumption $S^2=id$, examples appear as well from certain $q$-deformations of enveloping Lie algebras, with $|q|\neq1$, see \cite{abi}. Let us also mention that, given an abstract algebra $A$ satisfying the axioms in Definition 1.1, deciding whether $A$ has or not a matrix model is a subtle analytic problem. See \cite{chi}.

Let us record, for future reference, the group and group dual statements:

\begin{proposition}
Given $g_1,\ldots,g_K\in U_N$, consider the discrete group $\Gamma=<g_1,\ldots,g_K>$, and the compact group $G=\overline{\Gamma}$. We have then matrix models, as follows:
\begin{enumerate}
\item $\pi:C^*(\Gamma)\to M_N(\mathbb C)$, mapping $g\to g$.

\item $\nu:C(G)\to M_K(\mathbb C)$, mapping $\varphi\to diag(\varphi(g_i))$.
\end{enumerate}
\end{proposition}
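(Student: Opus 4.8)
The plan is to recognize both statements as direct instances of the two basic examples recorded right after Definition 1.2, and to supply the small verifications those examples require. For part (1), I would first note that the defining inclusion $\Gamma\subset U_N$ is a unitary representation of the abstract group $\Gamma$ on $\mathbb C^N$, so by the universal property of $C^*(\Gamma)$ it induces a $C^*$-algebra morphism $\pi:C^*(\Gamma)\to M_N(\mathbb C)$ sending each group element $g$ to the corresponding unitary of $U_N$; this respects the unitary Hopf algebra structure by construction, hence is a representation in the sense of Definition 1.3. The remaining content is purely about the Hopf image. By the first example, the Hopf image of a group representation $\Gamma\to U_N$ is $C^*(\Gamma')$, where $\Gamma'\subset U_N$ is the image subgroup. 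Since $\Gamma=\langle g_1,\ldots,g_K\rangle$ was defined as the subgroup of $U_N$ generated by the $g_i$, the map $\Gamma\to U_N$ is already injective, so $\Gamma'=\Gamma$ and the Hopf image is $C^*(\Gamma)=A$. Thus $\pi$ is inner faithful, i.e.\ a matrix model.

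For part (2), I would first check that $\nu$ is a $C^*$-algebra representation: each evaluation $ev_{g_i}:C(G)\to\mathbb C$ is a character, hence a $*$-homomorphism, so $\varphi\mapsto diag(\varphi(g_1),\ldots,\varphi(g_K))$ is a $*$-homomorphism of $C(G)$ into the diagonal subalgebra $\mathbb C^K\subset M_K(\mathbb C)$. The key observation is that $\nu$ factors as the faithful inclusion $\mathbb C^K\hookrightarrow M_K(\mathbb C)$ composed with $\oplus_i ev_{g_i}:C(G)\to\mathbb C^K$. Because this inclusion is injective, any Hopf quotient factoring $\nu$ automatically factors $\oplus_i ev_{g_i}$ and conversely (the factoring map must land in the diagonal, by injectivity of the inclusion and surjectivity of the quotient map), so the two Hopf images coincide. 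By the second example, the Hopf image of $\oplus_i ev_{g_i}$ is $C(G')$ with $G'=\overline{\langle g_1,\ldots,g_K\rangle}$. But by hypothesis $G=\overline{\Gamma}=\overline{\langle g_1,\ldots,g_K\rangle}=G'$, so $A'=C(G)=A$ and $\nu$ is inner faithful, hence a matrix model.

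Given the two examples, the result is then immediate, and the only genuinely new remark is that the matrix target $M_N(\mathbb C)$ in (1), respectively the diagonal embedding into $M_K(\mathbb C)$ in (2), does not disturb the identification of the Hopf image. I expect the main obstacle, were one to make the argument fully self-contained rather than citing the examples, to be the identification of the Hopf image of $\oplus_i ev_{g_i}$ with $C(\overline{\langle g_i\rangle})$: one must show that a Hopf quotient $C(G)\to C(H)$ through which all the $ev_{g_i}$ factor corresponds to a closed subgroup $H\subset G$ containing every $g_i$, and that the smallest such $H$ is the closed subgroup generated by the $g_i$. This is standard but relies on the correspondence between Hopf ideals and closed quantum subgroups.
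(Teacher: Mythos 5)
Your proposal is correct and follows the same route as the paper, which simply notes that both assertions are elementary consequences of the two basic examples discussed after Definition 1.2 and defers the details to the reference \cite{bb2}. Your additional verifications (that the maps are well-defined $*$-homomorphisms and that passing through the diagonal embedding $\mathbb C^K\hookrightarrow M_K(\mathbb C)$ does not change the Hopf image) are exactly the details the paper leaves implicit.
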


\begin{proof}
Both the assertions are elementary, and follow from the above discussion. For full details here, we refer to our previous paper \cite{bb2}.
\end{proof}

As explained in the introduction, we are interested here in using matrix models for solving some concrete questions, regarding the random walk on $\Gamma$. In order to discuss such questions, we must first study the Haar functional of $A$. We recall that such a Haar functional exists, thanks to the general results of Woronowicz in \cite{wo1}. 

We use in what follows multi-indices of exponents, $\varepsilon=(\varepsilon_1,\ldots,\varepsilon_p)\in\{1,*\}^p$. Given a square matrix $w\in M_n(R)$, we define $w^{\otimes\varepsilon}\in M_{n^p}(R)$ by $w^\varepsilon=w_{1,p+1}^{\varepsilon_1}\ldots w_{p,p+1}^{\varepsilon_p}$, using the leg-numbering notation, and the standard identification $M_{n^p}(R)\simeq M_n(\mathbb C)^{\otimes p}\otimes R$.

The general available results on the matrix models can be summarized as follows:

\begin{proposition}
Let $\pi:A\to M_K(\mathbb C)$ be a matrix model, mapping $u_{ij}\to U_{ij}$.
\begin{enumerate}
\item We have $Fix(u^{\otimes\varepsilon})=Fix(U^{\otimes\varepsilon})$, where $Fix(W)=\{\xi|W\xi=\xi\}$.

\item $(\int_Gu_{a_1b_1}^{\varepsilon_1}\ldots u_{a_pb_p}^{\varepsilon_p})_{a_1\ldots a_p,b_1\ldots b_p}$ is the orthogonal projection on $Fix(U^{\otimes\varepsilon})$.

\item $\int_G=\lim_{k\to\infty}\frac{1}{k}\sum_{r=1}^k\int_G^r$, where $\int_G^r=(tr\circ\pi)^{*r}$, with $\phi*\psi=(\phi\otimes\psi)\Delta$.

\item $\int_G^ru_{a_1b_1}^{\varepsilon_1}\ldots u_{a_pb_p}^{\varepsilon_p}=(T_\varepsilon^r)_{a_1\ldots a_p,b_1\ldots b_p}$, where $(T_\varepsilon)_{i_1\ldots i_p,j_1\ldots j_p}=tr(U_{i_1j_1}^{\varepsilon_1}\ldots U_{i_pj_p}^{\varepsilon_p})$.
\end{enumerate}
\end{proposition}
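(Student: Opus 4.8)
The plan is to treat (4) first as a bookkeeping computation, then deduce (3) from it by a mean ergodic argument, while (1) and (2) rest on the inner-faithfulness/Tannaka correspondence of \cite{bb2} together with Woronowicz's projection formula. The whole statement is really about the four objects $\int_G$, $\int_G^r$, $T_\varepsilon$ and $Fix(U^{\otimes\varepsilon})$ being compatible, so I would organize the proof around making these fit together.

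For (4) I would argue by induction on $r$. The case $r=1$ is the definition: since $\varphi=tr\circ\pi$ sends $u_{ab}^\varepsilon$ to $tr(U_{ab}^\varepsilon)$, we get $\int_G^1 u^\varepsilon=T_\varepsilon$. For the inductive step, apply the coproduct formula $\Delta(u_{a_1b_1}^{\varepsilon_1}\cdots u_{a_pb_p}^{\varepsilon_p})=\sum_{k_1\ldots k_p}u_{a_1k_1}^{\varepsilon_1}\cdots u_{a_pk_p}^{\varepsilon_p}\otimes u_{k_1b_1}^{\varepsilon_1}\cdots u_{k_pb_p}^{\varepsilon_p}$, which follows from Definition 1.1(1) and its $*$-version. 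Since $\int_G^{r}=\int_G^{r-1}*\varphi=(\int_G^{r-1}\otimes\varphi)\Delta$, the convolution becomes ordinary matrix multiplication of the index matrices, and $\int_G^r u^\varepsilon=T_\varepsilon^{r-1}T_\varepsilon=T_\varepsilon^r$ drops out. This step is routine.

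For (3) the key observation is that $T_\varepsilon$ is a linear contraction: writing $T_\varepsilon=(id\otimes tr)(U^{\otimes\varepsilon})$ and noting that $U^{\otimes\varepsilon}$ is a product of copies of the unitaries $U$ and $U^*$, hence unitary, while $id\otimes tr$ is unital completely positive, we get $\|T_\varepsilon\|\le 1$. The von Neumann mean ergodic theorem then gives $\frac1k\sum_{r=1}^k T_\varepsilon^r\to P_\varepsilon$, the orthogonal projection onto $\ker(1-T_\varepsilon)=Fix(T_\varepsilon)$. Combined with (4), this identifies the Cesàro limit of $\int_G^r$, on the dense $*$-subalgebra spanned by the coefficients $u^\varepsilon$, with the functional whose index matrix is $P_\varepsilon$; boundedness upgrades this to all of $A$. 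It then remains to match this limit with $\int_G$, which I would do through (2).

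The crux of the whole proposition, and where I expect the real work, is the identification $Fix(T_\varepsilon)=Fix(U^{\otimes\varepsilon})$, needed to make (2) and (3) agree. The inclusion $\supseteq$ is immediate by applying $tr$ to the defining matrix equation. For $\subseteq$, set $V=U^{\otimes\varepsilon}$ and suppose $(id\otimes tr)(V)\xi=\xi$. Writing $tr=\frac1K Tr$ and $\zeta_a=\xi\otimes e_a$, I would expand $\|\xi\|^2=\langle(id\otimes tr)(V)\xi,\xi\rangle=\frac1K\sum_a\langle V\zeta_a,\zeta_a\rangle$; since $V$ is unitary each term has real part at most $\|\xi\|^2$, so the averaged equality forces $V\zeta_a=\zeta_a$ for every $a$, which is exactly $\sum_j V_{ij}\xi_j=\xi_i\,1_K$, i.e. $\xi\in Fix(V)$. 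This equality-case argument is the step that genuinely uses both unitarity of $U^{\otimes\varepsilon}$ and faithfulness of $tr$. With it in hand, Woronowicz's theorem \cite{wo1} that the integral matrix $(\int_G u^\varepsilon)$ is the orthogonal projection onto $Fix(u^{\otimes\varepsilon})$ gives (2), once (1) is available; and (1), the equality $Fix(u^{\otimes\varepsilon})=Fix(U^{\otimes\varepsilon})$, is precisely the Tannakian reformulation of inner faithfulness from \cite{bb2}, namely that the Hopf image has Tannakian category generated by the fixed spaces of the model. Alternatively, (1) can be read off a posteriori by comparing the two computations of the Cesàro limit, so that the only truly external inputs are Woronowicz's projection formula and the Hopf-image construction.
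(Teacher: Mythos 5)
Your proposal is correct in its main line, but it is worth saying up front that the paper itself gives no real proof of this proposition: its ``proof'' is a list of pointers to \cite{bb2}, \cite{bfs}, \cite{ban}, \cite{wo1}, \cite{wo2}. What you have done is reconstruct the actual arguments behind those citations, and your reconstruction is sound. Your induction for (4) via $\Delta(u^{\varepsilon_1}_{a_1b_1}\cdots u^{\varepsilon_p}_{a_pb_p})=\sum_k u^{\varepsilon_1}_{a_1k_1}\cdots u^{\varepsilon_p}_{a_pk_p}\otimes u^{\varepsilon_1}_{k_1b_1}\cdots u^{\varepsilon_p}_{k_pb_p}$, turning convolution into matrix multiplication, is exactly the ``elementary'' computation the paper attributes to \cite{ban}, \cite{bfs}. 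For (3), your route --- $\|T_\varepsilon\|\le 1$ because $id\otimes tr$ is completely contractive and $U^{\otimes\varepsilon}$ is unitary, von Neumann's mean ergodic theorem for the Ces\`aro averages, and the equality case of Cauchy--Schwarz to prove $Fix(T_\varepsilon)=Fix(U^{\otimes\varepsilon})$ --- is a clean substitute for the ``idempotent state methods'' of \cite{bfs}, and the Cauchy--Schwarz step is correctly identified as the place where unitarity of $U^{\otimes\varepsilon}$ and faithfulness of $tr$ are genuinely used. Matching the limit functional with $\int_G$ on the dense $*$-subalgebra spanned by the coefficients, using uniform boundedness of the states, is fine.

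Two caveats. First, your closing remark that ``(1) can be read off a posteriori by comparing the two computations of the Ces\`aro limit'' is circular as stated: in your architecture the identification of the Ces\`aro limit with $\int_G$ goes through (2), which goes through (1), so you cannot then recover (1) from that comparison unless you first establish (3) by an independent argument (e.g.\ the idempotent-state argument: the Ces\`aro limit is an idempotent state whose null space generates a Hopf ideal, which must vanish by minimality of the Hopf image). Treat that sentence as a dispensable aside; your main route, in which (1) is taken from the Tannakian characterization of Hopf images in \cite{bb2} and \cite{wo2}, is the correct dependency order and coincides with what the paper intends. Second, for (2) you should note explicitly that Woronowicz's formula gives the \emph{orthogonal} projection onto $Fix(u^{\otimes\varepsilon})$ because the paper works under the standing assumption $S^2=id$, so that the Haar functional is a trace; in the non-Kac case the projection is in general not orthogonal, and your statement would need adjusting.
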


\begin{proof}
These results are known from \cite{bb2}, \cite{bfs}, the proof being as follows:

(1) This follows from Tannakian duality \cite{wo2}, see \cite{bb2}.

(2) This follows from (1) and from the Peter-Weyl theory in \cite{wo1}, see \cite{bb2}.

(3) This follows by using idempotent state methods, see \cite{bfs}.

(4) This formula, useful in conjunction with (3), is elementary, see \cite{ban}, \cite{bfs}.
\end{proof}

Let us try now to compute the Kesten type measure $\mu=law(Tr(u))$.

As a first observation, in the real case, $u=\bar{u}$, the character $\chi=Tr(u)$ is self-adjoint, and by unitarity of $u$, it satisfies $||\chi||\leq N$. Thus in this case $\mu$ is a probability measure, supported on $[-N,N]$. It is well-known that $\Gamma$ is amenable when $N\in supp(\mu)$.

In general, $\mu$ is a $*$-distribution, in the sense of noncommutative probability theory. Such a $*$-distribution is uniquely determined by its $*$-moments. See \cite{nsp}, \cite{vdn}.

We have the following result, coming from Proposition 1.5 above:

\begin{proposition}
Let $\mu^r$ be the law of $\chi=Tr(u)$ with respect to $\int_G^r=(tr\circ\pi)^{*r}$.
\begin{enumerate}
\item We have the convergence formula $\mu=\lim_{k\to\infty}\frac{1}{k}\sum_{r=0}^k\mu^r$, in moments.

\item The $*$-moments of $\mu^r$ are $c_\varepsilon^r=Tr(T_\varepsilon^r)$, where $(T_\varepsilon)_{i_1\ldots i_p,j_1\ldots j_p}=tr(U_{i_1j_1}^{\varepsilon_1}\ldots U_{i_pj_p}^{\varepsilon_p})$.
\end{enumerate}
\end{proposition}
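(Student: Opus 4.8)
The plan is to deduce both assertions directly from Proposition 1.5, the connecting device being the expansion $\chi=Tr(u)=\sum_i u_{ii}$. Since a $*$-distribution is determined by its collection of $*$-moments indexed by words $\varepsilon=(\varepsilon_1,\ldots,\varepsilon_p)\in\{1,*\}^p$ (as noted just before the statement), it suffices to compute, for each such $\varepsilon$, the number $c_\varepsilon^r=\int_G^r(\chi^{\varepsilon_1}\cdots\chi^{\varepsilon_p})$ and to relate it to the corresponding moment of $\mu$. Everything below is a manipulation of these scalars.

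I would first establish (2). Writing $\chi^{\varepsilon_k}=\sum_{i_k}u_{i_ki_k}^{\varepsilon_k}$ and distributing the product, the $\varepsilon$-moment with respect to $\int_G^r$ becomes a sum over diagonal multi-indices,
$$c_\varepsilon^r=\sum_{i_1,\ldots,i_p}\int_G^r u_{i_1i_1}^{\varepsilon_1}\cdots u_{i_pi_p}^{\varepsilon_p}.$$
By Proposition 1.5(4), taken with $a=b=(i_1,\ldots,i_p)$, each summand equals the diagonal entry $(T_\varepsilon^r)_{i_1\ldots i_p,\,i_1\ldots i_p}$; summing these diagonal entries is by definition $Tr(T_\varepsilon^r)$, which is exactly (2). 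The same computation, applied instead to $\int_G$ via Proposition 1.5(2), turns the projection onto $Fix(U^{\otimes\varepsilon})$ into its trace and recovers the expected limiting moment $c_\varepsilon=\dim Fix(u^{\otimes\varepsilon})$, using $Fix(u^{\otimes\varepsilon})=Fix(U^{\otimes\varepsilon})$ from Proposition 1.5(1).

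For (1), I would apply the Cesàro convergence of Proposition 1.5(3) to the fixed element $a=\chi^{\varepsilon_1}\cdots\chi^{\varepsilon_p}\in A$. Since each $\int_G^r$ is a linear functional, evaluating $\int_G=\lim_k\frac1k\sum_{r=1}^k\int_G^r$ at $a$ gives $c_\varepsilon=\lim_k\frac1k\sum_{r=1}^k c_\varepsilon^r$, which is precisely the assertion that $\mu=\lim_k\frac1k\sum_r\mu^r$ in moments. The only mismatch is that the statement sums from $r=0$ whereas Proposition 1.5(3) sums from $r=1$; but $\int_G^0$ is the counit $\varepsilon$, a single bounded term, so including it alters the Cesàro average by $O(1/k)$ and leaves the limit unchanged.

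I do not expect a genuine obstacle, as both parts are bookkeeping built on Proposition 1.5. The only points requiring minor care are the passage from convergence of functionals on $A$ to convergence of $*$-moments (immediate, since each $*$-moment is one fixed functional evaluated at one fixed element), the harmless index shift above, and matching the normalized trace $tr$ occurring inside $T_\varepsilon$ against the full trace $Tr$ outside it, consistently with the dimensions $U_{ij}\in M_K(\mathbb C)$ and $T_\varepsilon\in M_{N^p}(\mathbb C)$.
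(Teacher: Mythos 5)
Your proposal is correct and follows essentially the same route as the paper, which proves (2) by applying Proposition 1.5(4) and summing over $a_i=b_i$, and (1) by evaluating the Ces\`aro limit of Proposition 1.5(3) on the monomials $\chi^{\varepsilon_1}\cdots\chi^{\varepsilon_p}$. Your extra remarks on the $r=0$ versus $r=1$ index shift and the $tr$ versus $Tr$ normalization are harmless clarifications of details the paper leaves implicit.
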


\begin{proof}
These results are basically known since \cite{ban}, the proof being as follows:

(1) This follows from the limiting formula in Proposition 1.5 (3).

(2) This follows from Proposition 1.5 (4), by summing over $a_i=b_i$.
\end{proof}

The above discussion regarding $\mu$ applies to each $\mu^r$. More precisely, when $u=\bar{u}$ each $\mu^r$ is a probability measure on $[-N,N]$. In general, $\mu^r$ is a $*$-distribution.

Let us prove now that, under suitable assumptions, $\mu^r$ is the law of a certain explicit matrix. In order to do so, we will need a certain duality operation, as follows:

\begin{definition}
Let $\pi:A\to M_K(\mathbb C)$ be a matrix model, mapping $u_{ij}\to U_{ij}$.
\begin{enumerate}
\item We set $(U'_{kl})_{ij}=(U_{ij})_{kl}$, and define $\widetilde{\rho}:C(U_K^+)\to M_N(\mathbb C)$ by $v_{kl}\to U_{kl}'$.

\item We perform the Hopf image construction, as to get a model $\rho:A'\to M_N(\mathbb C)$.
\end{enumerate}
\end{definition}

Here the quantum group $U_K^+$ is the free analogue of the unitary group $U_K$, constructed by Wang in \cite{wa1}. More precisely, the algebra $C(U_K^+)$ is by definition the universal one generated by the entries of a $K\times K$ biunitary matrix $v=(v_{ij})$. See \cite{wa1}.

Observe that the matrix constructed in (1) is given by $U'=\Sigma U$, where $\Sigma$ is the flip. Thus this matrix is indeed biunitary, and produces a representation $\rho$ as in (1).

The operation $A\to A'$ is a duality, in the sense that we have $A''=A$. Let us first discuss a few basic examples, following \cite{ban}, \cite{bb2}. We use the duality operation $G\to\widehat{G}$ for the finite quantum groups, for which we refer to \cite{wo1}, and the basic group/group dual matrix model constructions, from Proposition 1.4 above:

\begin{proposition}
With $A=C(G),A'=C(G')$, we have:
\begin{enumerate}
\item $|G|<\infty\implies G'=\widehat{G}$.

\item $G=\overline{<g_1,\ldots,g_K>}\iff G'=\widehat{<g_1,\ldots,g_K>}$.
\end{enumerate}
\end{proposition}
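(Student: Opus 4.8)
The plan is to compute the dual matrix $U'$ explicitly for the two canonical matrix models of Proposition 1.4, and then identify the resulting Hopf image. The mechanism behind both assertions is the same: the duality operation $U\mapsto U'=\Sigma U$ interchanges the two index sets of the model, and thereby converts a ``diagonal'' model into a ``block-diagonal'' one, and vice versa. The content of the proposition is that a block-diagonal fundamental matrix $\mathrm{diag}(g_1,\ldots,g_K)$ is precisely the fundamental corepresentation of a group algebra, under the standard model of Proposition 1.4(1).

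For (2), I would start with $A=C(G)$, where $G=\overline{<g_1,\ldots,g_K>}\subset U_N$, equipped with the canonical model $\nu:C(G)\to M_K(\mathbb C)$, $\varphi\mapsto\mathrm{diag}(\varphi(g_i))$. Writing $U_{ij}=\nu(u_{ij})$, a direct computation gives $U_{ij}=\mathrm{diag}((g_1)_{ij},\ldots,(g_K)_{ij})$, a diagonal $K\times K$ matrix. Applying the reshuffling $(U'_{kl})_{ij}=(U_{ij})_{kl}$ then yields $U'_{kl}=\delta_{kl}g_k$, so that the dual fundamental matrix $U'=\mathrm{diag}(g_1,\ldots,g_K)$ is block-diagonal, with the $g_k\in U_N$ on the diagonal. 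The representation $\widetilde{\rho}:C(U_K^+)\to M_N(\mathbb C)$, $v_{kl}\mapsto U'_{kl}$, therefore kills all off-diagonal coordinates and sends each diagonal coordinate $v_{kk}$ to a unitary; since $\Delta(v_{kk})=\sum_l v_{kl}\otimes v_{lk}$ collapses to $v_{kk}\otimes v_{kk}$, the $v_{kk}$ must become group-like unitaries in the Hopf image, and the latter coincides with that of the induced representation $C^*(F_K)\to M_N(\mathbb C)$, $v_{kk}\mapsto g_k$, where $C^*(F_K)$ is the free group algebra on $K$ generators. By Proposition 1.4(1) this is inner faithful with Hopf image $C^*(<g_1,\ldots,g_K>)$, whence $A'=C^*(<g_1,\ldots,g_K>)=C(\widehat{<g_1,\ldots,g_K>})$, i.e. $G'=\widehat{<g_1,\ldots,g_K>}$. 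The reverse implication follows either by running the same computation starting from the model $\pi$ of Proposition 1.4(1), or formally from the duality $A''=A$.

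For (1), in the classical case one applies (2) with $g_1,\ldots,g_K$ chosen to enumerate all the elements of $G$: then $\overline{<g_1,\ldots,g_K>}=G$, and the model $\nu:C(G)\to M_K(\mathbb C)$ is exactly multiplication on $L^2(G)$, so (2) gives $G'=\widehat{G}$. For a genuine finite quantum group I would argue similarly with the left regular representation $\pi:A\to B(L^2(A))=M_K(\mathbb C)$, $K=\dim A$, which is faithful, hence inner faithful; here the dual matrix $U'$ is, up to the flip $\Sigma$, the multiplicative unitary $W$ of $G$, and since $\Sigma W^*\Sigma$ is by definition the multiplicative unitary of the Pontryagin dual, the theory of multiplicative unitaries identifies the Hopf image as $C(\widehat{G})$.

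The main obstacle is the second half of the Hopf image identification in (2): showing that the image is \emph{exactly} $C^*(<g_1,\ldots,g_K>)$, and not a proper quotient. The reshuffling computation and the fact that a block-diagonal biunitary forces group-like diagonal coordinates are both elementary; the nontrivial input is inner faithfulness, which is precisely what Proposition 1.4(1) supplies. In the finite quantum group case the analogous difficulty is to match $U'$ correctly with the multiplicative unitary and to check that its Hopf image realizes the full Pontryagin dual rather than a quotient of it — again a faithfulness statement, now for the regular representation.
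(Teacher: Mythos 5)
Your treatment of part (2) is essentially the paper's own argument: you compute the two biunitary matrices $U=\sum_l g_l\otimes e_{ll}$ (for the model $\nu$) and $U'=\sum_l e_{ll}\otimes g_l$, observe they are exchanged by the flip $\Sigma$, and invoke the inner faithfulness of Proposition 1.4 (1) to identify the Hopf image as $C^*(\Gamma)$. One small caveat: the step ``$\widetilde\rho$ kills the off-diagonal $v_{kl}$, hence they vanish in the Hopf image'' is not automatic, since the Hopf image need not embed into $M_N(\mathbb C)$; what makes it work is that the relations $v_{kl}=0$ for $k\neq l$ cut out a Hopf algebra quotient $C(U_K^+)\to C^*(F_K)$ through which $\widetilde\rho$ factors, so that the Hopf image of $\widetilde\rho$ coincides with that of the induced map $C^*(F_K)\to M_N(\mathbb C)$. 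With that said, part (2) is fine and matches the paper.

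Part (1) has a genuine gap. The duality $A\mapsto A'$ of Definition 1.7 is defined relative to a given matrix model $\pi:C(G)\to M_K(\mathbb C)$, and the statement (as the paper proves it) asserts $G'=\widehat G$ for an \emph{arbitrary} inner faithful $\pi$ and an arbitrary fundamental corepresentation $u$ of the finite quantum group $G$. You only treat the left regular representation (and, in the classical case, the evaluation model at an enumeration of the elements), so even if your multiplicative-unitary argument were completed it would establish the claim for one specific model only. Moreover, the key identification --- that the Hopf image of $v_{kl}\mapsto U'_{kl}$ is all of $C(\widehat G)$ and not a proper quotient --- is exactly the point you defer to ``the theory of multiplicative unitaries'' without carrying it out; you flag this yourself as the difficulty, but it is the heart of the matter. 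The paper's route is both shorter and fully general: given any matrix model $\pi$, the functionals $w_{kl}(x)=(\pi(x))_{kl}$ form a corepresentation of the dual Hopf algebra $C(G)^*=C(\widehat G)$, the map $\rho(\varphi)=(\varphi(u_{kl}))_{kl}$ is a representation of $C(G)^*$, and a one-line computation gives $\rho(w_{ab})=U'_{ab}$; hence $\widetilde\rho$ factors through $C(\widehat G)$, so $G'\subset\widehat G$, and equality follows by running the same argument with $G$ and $\widehat G$ interchanged (using $A''=A$). You would need to either reproduce this general argument or justify why the answer is independent of the chosen model before reducing to the regular representation.
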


\begin{proof}
These results are known since \cite{ban}, \cite{bb2}, the proof being as follows:

(1) Assume that $(C(G),u)$ is as in Definition 1.1 above, with $|G|<\infty$, and that we have a matrix model $\pi:C(G)\to M_K(\mathbb C)$. We can then construct a fundamental corepresentation for $C(G)^*$, by the formula $w_{kl}(x)=(\pi(x))_{kl}$, and a $*$-representation $\rho:C(G)^*\to M_N(\mathbb C)$, by the formula $\rho(\varphi)=(\varphi(u_{kl}))_{kl}$. We have:
$$\rho(w_{ab})=(w_{ab}(v_{kl}))_{kl}=(\rho(v_{kl})_{ab})_{kl}=((U_{kl})_{ab})_{kl}=((U'_{ab})_{kl})_{kl}=U'_{ab}$$

Thus we have $G'\subset\widehat{G}$, and by interchanging $G,\widehat{G}$, we obtain the result. See \cite{ban}.

(2) Given unitaries $g_1,\ldots,g_k\in U_N$, set $\Gamma=<g_1,\ldots,g_K>$ and $G=\overline{\Gamma}$. The standard examples of matrix models, described in Proposition 1.4 above, are as follows:

-- We have a model $\pi:C^*(\Gamma)\to M_N(\mathbb C)$, given by $g\to g$. Since we have $U_{ij}=\delta_{ij}g_i$ in this case, the associated biunitary matrix is diagonal, $U=\sum_le_{ll}\otimes g_l$.

-- We have a model $\nu:C(G)\to M_K(\mathbb C)$, given by $\nu(\varphi)=diag(\varphi(g_l))$. Here we have $U_{ij}=diag((g_l)_{ij})=\sum_le_{ll}(g_l)_{ij}$, and so $U=\sum_{ijl}e_{ij}\otimes e_{ll}(g_l)_{ij}=\sum_lg_l\otimes e_{ll}$.

Summarizing, for these two models the respective biunitary matrices are $\sum_le_{ll}\otimes g_l$ and $\sum_lg_l\otimes e_{ll}$, related indeed by the flip operation $\Sigma$. See \cite{bb2} for details.
\end{proof}

We denote by $D$ the dilation operation for probability measures, or for general $*$-distributions, given by the formula $D_r(law(X))=law(rX)$. 

We have the following result, extending previous findings from \cite{ban}:

\begin{theorem}
Consider the rescaled measure $\eta^r=D_{1/N}(\mu^r)$.
\begin{enumerate}
\item The moments $\gamma_p^r=c_p^r/N^p$ of $\eta^r$ satisfy $\gamma_p^r(A)=\gamma_r^p(A')$.

\item $\eta^r$ has the same moments as the matrix $T_r'=T_r(A')$.

\item In the real case $u=\bar{u}$ we have $\eta^r=law(T_r')$.
\end{enumerate}
\end{theorem}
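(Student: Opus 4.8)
The plan is to establish (1) by a direct moment computation, reducing both quantities to a single sum over a two-dimensional array of indices, and then to read off (2) and (3) as consequences. The one identity doing all the work is the ``transport'' relation $(U_{ij})_{kl}=(U'_{kl})_{ij}$ from Definition 1.7, which exchanges the roles of the $N$-indices and the $K$-indices. Throughout I write $tr$ for the normalized trace and $Tr$ for the usual one, so that by Proposition 1.6 we have $\gamma_p^r(A)=Tr(T_p^r)/N^p$, and likewise $\gamma_r^p(A')=Tr((T_r')^p)/K^r$, where $T_r'=T_r(A')$ is built from the matrices $U'_{kl}\in M_N(\mathbb C)$ and is a $K^r\times K^r$ matrix, normalized by $(N')^r=K^r$.

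The main step is the index chase for (1). Expanding $T_p^r$, I would write
\[
Tr(T_p^r)=\sum_{\{i_b^{(a)}\}}\prod_{a=1}^r(T_p)_{I_a,I_{a+1}},\qquad I_a=(i_1^{(a)},\ldots,i_p^{(a)}),\ I_{r+1}=I_1,
\]
and then expand each inner trace $(T_p)_{I_a,I_{a+1}}=tr(U_{i_1^{(a)}i_1^{(a+1)}}\cdots U_{i_p^{(a)}i_p^{(a+1)}})$ by introducing $K$-indices $k_b^{(a)}$ running cyclically in $b$ modulo $p$. This produces a factor $K^{-r}$ and yields
\[
Tr(T_p^r)=\frac{1}{K^r}\sum_{\{i_b^{(a)}\},\{k_b^{(a)}\}}\ \prod_{a=1}^r\prod_{b=1}^p\bigl(U_{i_b^{(a)}i_b^{(a+1)}}\bigr)_{k_b^{(a)}k_{b+1}^{(a)}}=:\frac{1}{K^r}\,\Sigma.
\]
Doing the same for $(T_r')^p$, but now with the $K$-indices carried by the subscripts of the $U'$ and the $N$-indices carried by its matrix entries, gives $Tr((T_r')^p)=N^{-p}\Sigma'$ for an analogous sum $\Sigma'$. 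Applying $(U_{ij})_{kl}=(U'_{kl})_{ij}$ to every factor identifies the two summands term by term: the grid of indices $(a,b)\in\{1,\ldots,r\}\times\{1,\ldots,p\}$ is simply read along rows in one case and along columns in the other, the cyclic conventions (superscripts modulo $r$, subscripts modulo $p$) matching up exactly, so $\Sigma'=\Sigma$. Hence
\[
\gamma_p^r(A)=\frac{Tr(T_p^r)}{N^p}=\frac{1}{N^pK^r}\,\Sigma=\frac{Tr((T_r')^p)}{K^r}=\gamma_r^p(A'),
\]
which is (1).

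Statement (2) is then immediate: by (1) the $p$-th moment of $\eta^r$ equals $\gamma_r^p(A')=tr((T_r')^p)$, which is exactly the $p$-th moment of the matrix $T_r'$ with respect to its normalized trace. For (3) I would first check that in the real case $u=\bar u$ the matrix $T_r'$ is self-adjoint: from $U_{ij}=U_{ij}^*$ one gets $(U'_{kl})_{ij}=(U_{ij})_{kl}=\overline{(U_{ij})_{lk}}=\overline{(U'_{lk})_{ij}}$, i.e. $\overline{U'_{lk}}=U'_{kl}$ entrywise, whence $\overline{(T_r')_{LK}}=(T_r')_{KL}$ by conjugating the defining product entrywise (entrywise conjugation being multiplicative). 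Thus $law(T_r')$ is the eigenvalue distribution of a Hermitian matrix, determined by its moments, while $\eta^r=D_{1/N}(\mu^r)$ is a probability measure supported on $[-1,1]$, hence also determined by its moments. By (2) these moment sequences agree, so $\eta^r=law(T_r')$.

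The routine part is the bookkeeping; the point requiring genuine care---and the main obstacle---is keeping the two cyclic conventions and the two normalizations $N^p$ and $K^r$ consistent through the transport identity, so that the row-reading and column-reading of the index grid really produce the same scalar $\Sigma$. Once the grid picture is set up correctly, (1) follows mechanically, and (2)--(3) are short, the only extra ingredient being the verification that $T_r'$ is Hermitian in the real case so that equality of moments upgrades to equality of distributions.
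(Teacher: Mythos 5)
Your proof is correct and follows essentially the same route as the paper: the same expansion of $Tr(T_p^r)$ into a doubly-indexed sum, the term-by-term application of $(U_{ij})_{kl}=(U'_{kl})_{ij}$ to identify it (up to the normalizations $K^r$ and $N^p$) with the expansion of $Tr((T_r')^p)$, and then (2) and (3) as immediate consequences. Your explicit verification that $T_r'$ is Hermitian in the real case is a small detail the paper leaves implicit, but otherwise the arguments coincide.
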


\begin{proof}
All results follow from Proposition 1.6 (2), as follows:

(1) We have the following computation:
\begin{eqnarray*}
c_p^r(A)
&=&Tr(T_p^r)
=\sum_i(T_p)_{i_1^1\ldots i_p^1,i_1^2\ldots i_p^2}\ldots\ldots(T_p)_{i_1^r\ldots i_p^r,i_1^1\ldots i_p^1}\\
&=&\sum_itr(U_{i_1^1i_1^2}\ldots U_{i_p^1i_p^2})\ldots\ldots tr(U_{i_1^ri_1^1}\ldots U_{i_p^ri_p^1})\\
&=&\frac{1}{N^r}\sum_i\sum_j(U_{i_1^1i_1^2})_{j_1^1j_2^1}\ldots(U_{i_p^1i_p^2})_{j_p^1j_1^1}\ldots\ldots(U_{i_1^ri_1^1})_{j_1^rj_2^r}\ldots(U_{i_p^ri_p^1})_{j_p^rj_1^r}
\end{eqnarray*}

In terms of the matrix $(U'_{kl})_{ij}=(U_{ij})_{kl}$, then by permuting the terms in the product on the right, and finally with the changes $i_a^b\leftrightarrow i_b^a,j_a^b\leftrightarrow j_b^a$, we obtain:
\begin{eqnarray*}
c_p^r(A)
&=&\frac{1}{N^r}\sum_i\sum_j(U'_{j_1^1j_2^1})_{i_1^1i_1^2}\ldots(U'_{j_p^1j_1^1})_{i_p^1i_p^2}\ldots\ldots(U'_{j_1^rj_2^r})_{i_1^ri_1^1}\ldots(U'_{j_p^rj_1^r})_{i_p^ri_p^1}\\
&=&\frac{1}{N^r}\sum_i\sum_j(U'_{j_1^1j_2^1})_{i_1^1i_1^2}\ldots(U'_{j_1^rj_2^r})_{i_1^ri_1^1}\ldots\ldots(U'_{j_p^1j_1^1})_{i_p^1i_p^2}\ldots(U'_{j_p^rj_1^r})_{i_p^ri_p^1}\\
&=&\frac{1}{N^r}\sum_i\sum_j(U'_{j_1^1j_1^2})_{i_1^1i_2^1}\ldots(U'_{j_r^1j_r^2})_{i_r^1i_1^1}\ldots\ldots(U'_{j_1^pj_1^1})_{i_1^pi_2^p}\ldots(U'_{j_r^pj_r^1})_{i_r^pi_1^p}
\end{eqnarray*}

On the other hand, if we use again the above formula of $c_p^r(A)$, but this time for the matrix $U'$, and with the changes $r\leftrightarrow p$ and $i\leftrightarrow j$, we obtain:
$$c_r^p(A')=\frac{1}{N^p}\sum_i\sum_j(U'_{j_1^1j_1^2})_{i_1^1i_2^1}\ldots(U'_{j_r^1j_r^2})_{i_r^1i_1^1}\ldots\ldots(U'_{j_1^pj_1^1})_{i_1^pi_2^p}\ldots(U'_{j_r^pj_r^1})_{i_r^pi_1^p}$$

Now by comparing this with the previous formula, we obtain $N^rc_p^r(A)=N^pc_r^p(A')$. Thus we have $c_p^r(A)/N^p=c_r^p(A')/N^r$, and this gives the result.

(2) By using (1) and the formula in Proposition 1.6 (2), we obtain:
$$\frac{c_p^r(A)}{N^p}=\frac{c_r^p(A')}{N^r}=\frac{Tr((T'_r)^p)}{N^r}=tr((T'_r)^p)$$

But this gives the equality of moments in the statement.

(3) This follows from the moment equality in (2), and from the standard fact that for self-adjoint variables, the moments uniquely determine the distribution.
\end{proof}

\section{Projective models}

In general, the use of the above methods is quite limited. We restrict now attention to a certain special class of models, for which more general theory can be developed. 

We recall that a square matrix $u=(u_{ij})$ is called ``magic'' if its entries are projections ($p=p^2=p^*$), which sum up to 1 on each row and column. The basic example is provided by the matrix coordinates $u_{ij}:S_N\subset O_N\to\mathbb R$, given by $u_{ij}(\sigma)=\delta_{i\sigma(j)}$.

The following key definition is due to Wang \cite{wa2}:

\begin{definition}
$C(S_N^+)$ is the universal $C^*$-algebra generated by the entries of a $N\times N$ magic matrix $u$, with $\Delta(u_{ij})=\sum_ku_{ik}\otimes u_{kj}$, $\varepsilon(u_{ij})=\delta_{ij}$, $S(u_{ij})=u_{ji}$.
\end{definition}

This algebra satisfies the axioms in Definition 1.1, so the underlying space $S_N^+$ is a compact quantum group, called quantum permutation group. The canonical embedding $S_N\subset S_N^+$ is an isomorphism at $N=1,2,3$, but not at $N\geq4$. See \cite{wa2}.

Now back to the matrix models, we recall from Definition 1.7 that such models come in pairs, $\pi:A\to M_K(\mathbb C)$, $u_{ij}\to U_{ij}$ and $\pi':A'\to M_N(\mathbb C)$, $u_{ij}'\to U_{ij}'$, the connecting formula being $(U'_{ij})_{kl}=(U_{kl})_{ij}$, or, equivalently, $U'=\Sigma U$, where $\Sigma$ is the flip. We agree from now on to restrict the attention to the case $K=N$. We have:

\begin{definition}
A matrix model $\pi:A\to M_N(\mathbb C)$, with dual model $\pi':A'\to M_N(\mathbb C)$, is called projective if both $\pi,\pi'$ appear from representations of $C(S_N^+)$.
\end{definition}

In other words, with $A=C(G),A'=C(G')$, the projectivity condition states that we have $G,G'\subset S_N^+$. Equivalently, with $U=\sum_{ij}e_{ij}\otimes U_{ij}$ and $U'=\sum_{kl}e_{kl}\otimes U'_{kl}$, the projectivity condition states that both matrices $(U_{ij})$ and $(U'_{ij})$ must be magic.

The basic examples of such models are those coming from the complex Hadamard matrices. We recall that such a matrix, $H\in M_N(\mathbb C)$, has by definition its entries on the unit circle, and its rows are pairwise orthogonal. At the level of examples, the Fourier matrix $F_G$ of any finite abelian group $G$ is Hadamard, of size $N=|G|$. See \cite{tzy}.

The models associated to the Hadamard matrices are constructed as follows:

\begin{proposition}
If $H\in M_N(\mathbb C)$ is Hadamard, with rows $H_1,\ldots,H_N\in\mathbb T^N$, then
$$U_{ij}=Proj\left(\frac{H_i}{H_j}\right)=\frac{1}{N}\left(\frac{H_{ik}H_{jl}}{H_{il}H_{jk}}\right)_{kl}$$
produces a projective model. In addition we have $U'(H)=U(H^t)$.
\end{proposition}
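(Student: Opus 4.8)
The plan is to check directly that the matrix $(U_{ij})$ is magic, so that Wang's universal property for $C(S_N^+)$ yields a representation $\widetilde\pi:C(S_N^+)\to M_N(\mathbb C)$; taking the Hopf image then produces, as in Definition 1.2, an inner faithful model $\pi:A\to M_N(\mathbb C)$ with $A=C(G)$ and $G\subset S_N^+$. Projectivity will in addition require the dual matrix $(U'_{ij})$ to be magic, and for this I would use the stated identity $U'(H)=U(H^t)$ together with the fact that $H^t$ is again Hadamard.

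First I would record that each $U_{ij}$ is a projection. Since the entries of $H$ lie on the unit circle, the vector $\xi_{ij}=H_i/H_j$ with components $(H_{ik}/H_{jk})_k$ has $\|\xi_{ij}\|^2=N$, so the orthogonal projection onto the line $\mathbb C\xi_{ij}$ is $\frac1N\xi_{ij}\xi_{ij}^*$; computing its $(k,l)$ entry, and using $\overline{H_{ab}}=H_{ab}^{-1}$, recovers the displayed formula and shows $U_{ij}=U_{ij}^*=U_{ij}^2$ automatically.

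The key step is the summation condition. Writing out the $(k,l)$ entry of the row sum gives
$$\sum_j(U_{ij})_{kl}=\frac1N\frac{H_{ik}}{H_{il}}\sum_jH_{jl}\overline{H_{jk}},$$
and the inner sum is the scalar product of the columns $l$ and $k$ of $H$. The essential input here is that the columns of a square complex Hadamard matrix are orthogonal: this follows from row orthogonality, since $HH^*=N\cdot 1_N$ forces $H^*H=N\cdot 1_N$. Hence the inner sum equals $N\delta_{kl}$ and the row sum collapses to $1_N$. The column sum $\sum_i(U_{ij})_{kl}=\frac1N\frac{H_{jl}}{H_{jk}}\sum_iH_{ik}\overline{H_{il}}$ is treated identically, again invoking orthogonality of the columns of $H$. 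This proves that $(U_{ij})$ is magic.

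Finally I would establish $U'(H)=U(H^t)$ by the reindexing $(U'_{ij})_{kl}=(U_{kl})_{ij}=\frac1N(H_{ki}H_{lj})/(H_{kj}H_{li})$, which matches the definition of $U(H^t)$ term by term once $(H^t)_{ab}=H_{ba}$ is substituted. Since $H^t$ has unit-modulus entries and its rows are the (orthogonal) columns of $H$, it is again a complex Hadamard matrix, so the first part applies to $H^t$ and shows that $U(H^t)=U'(H)$ is magic as well; thus $\pi'$ also factors through $C(S_N^+)$ and the model is projective. I expect the main obstacle to be the summation step, where the passage from row to column orthogonality for square Hadamard matrices is precisely what makes the magic condition hold, and should not be glossed over.
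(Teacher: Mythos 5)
Your proof is correct and follows essentially the same route as the paper: verify the magic condition directly from Hadamard orthogonality, then deduce that $U'$ is magic from the identity $U'(H)=U(H^t)$ together with the fact that $H^t$ is again Hadamard. The only cosmetic difference is that you compute the row and column sums of $(U_{ij})$ entrywise, using column orthogonality of $H$ (i.e. $H^*H=N\cdot 1_N$), whereas the paper checks pairwise orthogonality of the vectors $H_i/H_j$ along rows and columns and concludes that the $N$ resulting rank-one projections sum to $1$; the two verifications are equivalent.
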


\begin{proof}
The vectors $H_1,\ldots,H_N$ being pairwise orthogonal, we obtain:
$$\Big\langle\frac{H_i}{H_j},\frac{H_i}{H_k}\Big\rangle=\sum_r\frac{H_{ir}}{H_{jr}}\cdot\frac{H_{kr}}{H_{ir}}=\sum_r\frac{H_{kr}}{H_{jr}}=<H_k,H_j>=N\delta_{jk}$$

A similar computation gives $<H_i/H_j,H_k/H_j>=N\delta_{ik}$, so the matrix of rank one projections $U_{ij}=Proj(H_i/H_j)$ is magic. Moreover, by using the  formula $Proj(\xi)=\frac{1}{||\xi||^2}(\xi_i\overline{\xi}_j)_{ij}$, the projections $U_{ij}$ are indeed given by the formula in the statement.

Regarding now the last assertion, this follows from:
$$(U_{ij}')_{kl}=(U_{kl})_{ij}=\frac{1}{N}\cdot\frac{H_{ki}H_{lj}}{H_{kj}H_{li}}=\frac{1}{N}\cdot\frac{(H^t)_{ik}(H^t)_{jl}}{(H^t)_{jk}(H^t)_{il}}$$

In particular the matrix $U'$ is magic as well, and this finishes the proof.
\end{proof}

We can deform the tensor products of projective models, as follows:

\begin{proposition}
Given two projective models $\pi:A\to M_M(\mathbb C)$, $\nu:B\to M_N(\mathbb C)$, mapping $u_{ij}\to U_{ij},v_{ij}\to V_{ij}$, the matrix $W=U\otimes_QV$ given by
$$(W_{ia,jb})_{kc,ld}=\frac{Q_{ic}Q_{jd}}{Q_{id}Q_{jc}}(U_{ij})_{kl}(V_{ab})_{cd}$$
produces a projective model, for any choice of the parameter matrix $Q\in M_{M\times N}(\mathbb T)$.
\end{proposition}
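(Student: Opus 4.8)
The plan is to verify the two conditions packaged in Definition 2.2, namely that both the matrix $W$ and its dual $W'$ are magic; by the characterization recorded after that definition, this is exactly what ``produces a projective model'' means (inner faithfulness being automatic, via the Hopf image construction). The first thing I would isolate is a preliminary observation that uses \emph{only} the magic property of the inputs: for any two magic matrices $U,V$ and any unimodular parameter $Q$, the deformed tensor product $U\otimes_QV$ is again magic. Since its proof never invokes anything beyond magic-ness, it can be recycled for the dual at the end.

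For the first condition I would check directly that each $W_{ia,jb}$ is a projection. Self-adjointness follows by conjugating the defining formula: because $|Q_{ic}|=1$ the phase factor is inverted under complex conjugation, which exactly compensates the transposition of the inner indices, while $\overline{(U_{ij})_{lk}}=(U_{ij})_{kl}$ and the analogous identity for $V$ come from $U_{ij},V_{ab}$ being self-adjoint. Idempotency is the crux: multiplying two copies of $W_{ia,jb}$ and summing over the intermediate inner indices $(m,e)$, the phase factor telescopes, the $e$- and $m$-dependent contributions cancelling in pairs of the form $(Q_{je}/Q_{ie})(Q_{ie}/Q_{je})=1$, and what remains is the original phase times $(U_{ij}^2)_{kl}(V_{ab}^2)_{cd}=(U_{ij})_{kl}(V_{ab})_{cd}$. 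This telescoping is precisely what makes the deformation compatible with multiplication, and I expect it to be the main point of the whole argument.

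The magic sum rules for $W$ then follow by a mechanism worth highlighting: summing over a column or row of one tensor factor produces a Kronecker delta (from $\sum_bV_{ab}=1$, resp.\ $\sum_aV_{ab}=1$) which forces the relevant $V$-inner indices to coincide and thereby trivializes the otherwise obstructing $Q$-phase; the surviving sum over the other factor ($\sum_jU_{ij}=1$, resp.\ $\sum_iU_{ij}=1$) then supplies the remaining delta. This shows $W$ is magic.

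For the dual I would rewrite $(W'_{kc,ld})_{ia,jb}=(W_{ia,jb})_{kc,ld}$ using $(U'_{kl})_{ij}=(U_{ij})_{kl}$ and $(V'_{cd})_{ab}=(V_{ab})_{cd}$; the result is exactly the deformation of $V'$ and $U'$ with parameter $Q^t$, up to a flip of the two tensor legs, which is a consistent relabeling of the outer and inner index spaces and hence preserves magic-ness. Since $U,V$ are assumed projective, the matrices $U',V'$ are themselves magic, so the preliminary observation applies verbatim to conclude that $W'$ is magic as well. The subtlety I would flag is precisely here: the dual condition cannot be extracted from magic-ness of $U,V$ alone and genuinely needs the full projectivity hypothesis, which is what renders the class of projective models stable under this deformation. (Should one prefer to avoid the leg-flip bookkeeping, the same conclusion is reached by repeating the self-adjointness, idempotency and sum-rule computations above directly for $W'$, now invoking the corresponding magic relations for $U'_{kl}$ and $V'_{cd}$.)
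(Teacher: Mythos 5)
Your proof is correct, and the verification of the magic condition for $W$ itself (self-adjointness via $|Q_{ic}|=1$, idempotency via the telescoping of the $e$-dependent phases, and the row/column sum rules via $\sum_b V_{ab}=1$ forcing $c=d$ and killing the phase) matches the paper's computation almost line for line; the only cosmetic difference is that the paper establishes idempotency and pairwise row-orthogonality in a single computation of $W_{ia,jb}W_{ia,me}$ and then checks only the column sums, whereas you check both sum rules directly. Where you genuinely diverge is the dual half. The paper derives the formula $(W'_{ia,jb})_{kc,ld}=\frac{Q_{ka}Q_{lb}}{Q_{kb}Q_{la}}(U'_{ij})_{kl}(V'_{ab})_{cd}$ and then \emph{redoes} the self-adjointness and idempotency computations for $W'$ from scratch, using the shortcut that $W'$ is automatically biunitary (being the dual of a model), so only the projection property of its entries needs checking. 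You instead package the first half as a standalone lemma -- the deformed tensor product of any two magic matrices is magic -- observe that the displayed formula identifies $W'$ with $V'\otimes_{Q^t}U'$ up to a simultaneous flip of the outer and inner index legs, and invoke the lemma a second time with $U',V'$ (magic by the projectivity hypothesis) in place of $U,V$. This buys you a shorter proof with no repeated computation, and it anticipates the structural identity $W'=U'\,{}_Q\!\otimes V'$ that the paper only isolates afterwards in Proposition 2.5; the paper's route, in exchange, produces the explicit formula for $W'$ and the verification pattern that it then reuses there. Your remark that the dual condition genuinely requires projectivity of $U,V$ rather than mere magic-ness is accurate and worth keeping.
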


\begin{proof}
Let us first check that the elements $W_{ia,jb}$ are self-adjoint. We have indeed:
\begin{eqnarray*}
(W_{ia,jb}^*)_{kc,ld}
&=&\overline{(W_{ia,jb})_{ld,kc}}=\frac{\bar{Q}_{id}\bar{Q}_{jc}}{\bar{Q}_{ic}\bar{Q}_{jd}}\overline{(U_{ij})_{lk}}\cdot\overline{(V_{ab})_{dc}}\\
&=&\frac{Q_{ic}Q_{jd}}{Q_{id}Q_{jc}}(U_{ij})_{kl}(V_{ab})_{cd}=(W_{ia,jb})_{kc,ld}
\end{eqnarray*}

We verify now the magic condition. First, we have:
\begin{eqnarray*}
(W_{ia,jb}W_{ia,me})_{kc,ld}
&=&\sum_{nf}(W_{ia,jb})_{kc,nf}(W_{ia,me})_{nf,ld}\\
&=&\sum_{nf}\frac{Q_{ic}Q_{jf}}{Q_{if}Q_{jc}}(U_{ij})_{kn}(V_{ab})_{cf}\frac{Q_{if}Q_{md}}{Q_{id}Q_{mf}}(U_{im})_{nl}(V_{ae})_{fd}\\
&=&\sum_f\frac{Q_{ic}Q_{jf}Q_{md}}{Q_{jc}Q_{id}Q_{mf}}(V_{ab})_{cf}(V_{ae})_{fd}\sum_n(U_{ij})_{kn}(U_{im})_{nl}
\end{eqnarray*}

The last sum on the right being $(U_{ij}U_{im})_{kl}=\delta_{jm}(U_{ij})_{kl}$, we obtain:
\begin{eqnarray*}
(W_{ia,jb}W_{ia,me})_{kc,ld}
&=&\delta_{jm}\sum_f\frac{Q_{ic}Q_{jd}}{Q_{jc}Q_{id}}(V_{ab})_{cf}(V_{ae})_{fd}(U_{ij})_{kl}\\
&=&\delta_{jm}\frac{Q_{ic}Q_{jd}}{Q_{jc}Q_{id}}(U_{ij})_{kl}\sum_f(V_{ab})_{cf}(V_{ae})_{fd}
\end{eqnarray*}

The last sum on the right being $(V_{ab}V_{ae})_{cd}=\delta_{be}(V_{ab})_{cd}$, we obtain:
$$(W_{ia,jb}W_{ia,me})_{kc,ld}=\delta_{jm}\delta_{be}\frac{Q_{ic}Q_{jd}}{Q_{jc}Q_{id}}(U_{ij})_{kl}(V_{ab})_{cd}=\delta_{jm}\delta_{be}(W_{ia,jb})_{kc,ld}$$

Thus the elements $W_{ia,jb}$ are indeed projections, which are pairwise orthogonal on rows. In order to conclude that $W$ is magic, we check that the sum on the columns is 1:
$$\sum_{ia}(W_{ia,jb})_{kc,ld}
=\sum_i\frac{Q_{ic}Q_{jd}}{Q_{id}Q_{jc}}(U_{ij})_{kl}\sum_a(V_{ab})_{cd}
=\delta_{cd}\sum_i(U_{ij})_{kl}=\delta_{cd}\delta_{kl}$$

It remains to prove that $W'$ is a projective model too. Since $W$ is a model, so is $W'$, so it suffices to prove that the entries of $W'$ are projections. We have:
$$(W'_{ia,jb})_{kc,ld}=(W_{kc,ld})_{ia,jb}=\frac{Q_{ka}Q_{lb}}{Q_{kb}Q_{la}}(U_{kl})_{ij}(V_{cd})_{ab}=\frac{Q_{ka}Q_{lb}}{Q_{kb}Q_{la}}(U'_{ij})_{kl}(V'_{ab})_{cd}$$

Now since both $U',V'$ are projective models, we obtain:
\begin{eqnarray*}
(W_{ia,jb}'^*)_{kc,ld}
&=&\overline{(W'_{ia,jb})_{ld,kc}}=\frac{\bar{Q}_{la}\bar{Q}_{kb}}{\bar{Q}_{lb}\bar{Q}_{ka}}\overline{(U'_{ij})_{lk}}\cdot\overline{(V'_{ab})_{dc}}\\
&=&\frac{Q_{lb}Q_{ka}}{Q_{la}Q_{kb}}(U'_{ij})_{kl}(V'_{ab})_{cd}=(W'_{ia,jb})_{kc,ld}
\end{eqnarray*}

Finally, the ckeck of the idempotent condition goes as follows:
\begin{eqnarray*}
(W_{ia,jb}'^2)_{kc,ld}
&=&\sum_{nf}(W'_{ia,jb})_{kc,nf}(W'_{ia,jb})_{nf,ld}\\
&=&\sum_{nf}\frac{Q_{ka}Q_{nb}}{Q_{kb}Q_{na}}(U'_{ij})_{kn}(V'_{ab})_{cf}\frac{Q_{na}Q_{lb}}{Q_{nb}Q_{la}}(U'_{ij})_{nl}(V'_{ab})_{fd}\\
&=&\frac{Q_{ka}Q_{lb}}{Q_{kb}Q_{la}}\sum_f(V'_{ab})_{cf}(V'_{ab})_{fd}\sum_n(U'_{ij})_{kn}(U'_{ij})_{nl}\\
&=&\frac{Q_{ka}Q_{lb}}{Q_{kb}Q_{la}}(V'_{ab})_{cd}(U'_{ij})_{kl}=(W'_{ia,jb})_{kc,ld}
\end{eqnarray*}

We conclude that $W'$ is magic too, and this finishes the proof.
\end{proof}

As an example, assume that we are given two Hadamard matrices, $H\in M_M(\mathbb C)$ and $K\in M_N(\mathbb C)$, and let us form the deformed tensor product $H\otimes_QK=(Q_{ib}H_{ij}K_{ab})_{ia,jb}$, which is Hadamard as well. See \cite{tzy}. The model associated to this matrix is:
$$(U_{ia,jb})_{kc,ld}=\frac{1}{MN}\cdot\frac{(Q_{ic}H_{ik}K_{ac})(Q_{jd}H_{jl}K_{bd})}{(Q_{id}H_{il}K_{ad})(Q_{jc}H_{jk}K_{bc})}=\frac{Q_{ic}Q_{jd}}{Q_{id}Q_{jc}}(U^H_{ij})_{kl}(U^K_{ab})_{cd}$$

Thus, the $\otimes_Q$ operations for models and for Hadamard matrices are compatible.

Another theoretical remark is that, in the projective model framework, the operation constructed in Proposition 2.4 has a dual counterpart, constructed as follows:

\begin{proposition}
With $U,V$ as in Proposition 2.4, the matrix
$W^\circ=U\!\!{\ }_Q\!\otimes V$ given by
$$(W^\circ_{ia,jb})_{kc,ld}=\frac{Q_{ka}Q_{lb}}{Q_{kb}Q_{la}}(U_{ij})_{kl}(V_{ab})_{cd}$$
produces a projective model. We have $W'=U'\!\!\!{\ }_Q\!\otimes V'$ and $W^{\circ\prime}=U'\otimes_QV'$.
\end{proposition}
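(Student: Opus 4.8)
The plan is to avoid repeating the long magic-matrix verification of Proposition 2.4, and instead to reduce everything to that proposition by first establishing the two duality identities, which carry the genuinely new content. So I would first prove $W'=U'\,{}_Q\!\otimes V'$ and $W^{\circ\prime}=U'\otimes_Q V'$, and only afterwards deduce the projectivity of $W^\circ$ as a formal consequence, with essentially no extra computation.

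First I would prove the two duality formulas by a direct index computation using only the defining rule $(X'_{pq})_{rs}=(X_{rs})_{pq}$. For the first, I compute $(W'_{ia,jb})_{kc,ld}=(W_{kc,ld})_{ia,jb}$; feeding this into the Proposition 2.4 formula for $W=U\otimes_Q V$ and relabelling, the prefactor becomes $Q_{ka}Q_{lb}/(Q_{kb}Q_{la})$ and the entries become $(U_{kl})_{ij}(V_{cd})_{ab}=(U'_{ij})_{kl}(V'_{ab})_{cd}$, which is exactly the defining formula of $U'\,{}_Q\!\otimes V'$. In fact this identity is already contained in the displayed computation inside the proof of Proposition 2.4. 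The computation for $W^{\circ\prime}=U'\otimes_Q V'$ is entirely analogous, starting from $(W^{\circ\prime}_{ia,jb})_{kc,ld}=(W^\circ_{kc,ld})_{ia,jb}$: here the inner $U$-indices $k,l$ of the prefactor turn into the outer indices $i,j$, so the weight $Q_{ka}Q_{lb}/(Q_{kb}Q_{la})$ is replaced by $Q_{ic}Q_{jd}/(Q_{id}Q_{jc})$, recovering the $\otimes_Q$ operation applied to $U',V'$.

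With the two identities in hand, projectivity of $W^\circ$ follows with no further computation. The dual matrix $W^{\circ\prime}=U'\otimes_Q V'$ is magic because $U',V'$ are projective models and Proposition 2.4 asserts that any $\otimes_Q$ of projective models is projective. For $W^\circ$ itself I would use the first identity in its equivalent form ``$X\,{}_Q\!\otimes Y$ is magic whenever $X,Y$ are projective models'': indeed, for projective $\tilde U,\tilde V$ the matrix $\tilde U'\,{}_Q\!\otimes\tilde V'=(\tilde U\otimes_Q\tilde V)'$ is the dual of a projective model, hence magic. Since $X\mapsto X'$ is an involution on projective models ($X''=X$, and the dual of a projective model is again projective), taking $\tilde U=U'$, $\tilde V=V'$ yields that $U''\,{}_Q\!\otimes V''=U\,{}_Q\!\otimes V=W^\circ$ is magic. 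Together with the magicness of $W^{\circ\prime}$ this shows $W^\circ$ produces a projective model.

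The only delicate point, and the step I would double-check, is the index bookkeeping in the two duality computations. The operations $\otimes_Q$ and $\,{}_Q\!\otimes$ differ exactly in whether the $Q$-weight sits on the outer $U$-indices (and inner $V$-indices) or on the inner $U$-indices (and outer $V$-indices), and the flip $X\mapsto X'$ is precisely what swaps these roles; getting the substitution $i\leftrightarrow k$, $j\leftrightarrow l$, $a\leftrightarrow c$, $b\leftrightarrow d$ right in the prefactor is where an error would most easily slip in. Everything else is purely formal, so the one genuine insight is the recognition that $W^\circ$ need not be verified directly at all.
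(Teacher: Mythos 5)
Your proposal is correct and takes essentially the same route as the paper: first establish the two duality identities $W'=U'\,{}_Q\!\otimes V'$ and $W^{\circ\prime}=U'\otimes_QV'$ by the direct index computation $(X'_{pq})_{rs}=(X_{rs})_{pq}$, then deduce the projectivity of $W^\circ$ from Proposition 2.4 via these identities. The only difference is that the paper compresses the final deduction into a one-line remark, whereas you spell out the involution argument ($W^\circ=(U'\otimes_QV')'$ with $U'\otimes_QV'$ projective by Proposition 2.4) explicitly, which is a faithful expansion of what the paper intends.
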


\begin{proof}
We use the following formula, already met in the proof of Proposition 2.4:
$$(W'_{ia,jb})_{kc,ld}=(W_{kc,ld})_{ia,jb}=\frac{Q_{ka}Q_{lb}}{Q_{kb}Q_{la}}(U_{kl})_{ij}(U_{cd})_{ab}=
\frac{Q_{ka}Q_{lb}}{Q_{kb}Q_{la}}(U'_{ij})_{kl}(V'_{ab})_{cd}$$

With the convention in the statement for the products $\!\!{\ }_Q\otimes$, this means precisely that we have $W'=U'\!\!\!{\ }_Q\!\otimes V'$. The last assertion is proved similarly, because we have:
$$(W^{\circ\prime}_{ia,jb})_{kc,ld}=(W^\circ_{kc,ld})_{ia,jb}=\frac{Q_{ic}Q_{jd}}{Q_{id}Q_{jc}}(U_{kl})_{ij}(V_{cd})_{ab}=\frac{Q_{ic}Q_{jd}}{Q_{id}Q_{jc}}(U'_{ij})_{kl}(V'_{ab})_{cd}$$

Finally, the fact that the matrix $W^\circ$ produces indeed a projective model follows from Proposition 2.4, by using the two connecting formulae that we just proved.
\end{proof}

Once again, we have here a compatibility with the known complex Hadamard matrix constructions, and namely with the operation $H\!\!{\ }_Q\!\otimes K=(Q_{ja}H_{ij}K_{ab})_{ia,jb}$ from \cite{tzy}. Indeed, the projective model associated to such a matrix is:
$$(U_{ia,jb})_{kc,ld}=\frac{1}{MN}\cdot\frac{(Q_{ka}H_{ik}K_{ac})(Q_{lb}H_{jl}K_{bd})}{(Q_{la}H_{il}K_{ad})(Q_{kb}H_{jk}K_{bc})}=\frac{Q_{ka}Q_{lb}}{Q_{la}Q_{kb}}(U^H_{ij})_{kl}(U^K_{ab})_{cd}$$

As a last theoretical result about the deformed tensor products, constructed in Proposition 2.4 and Proposition 2.5 above, here is an alternative definition for them:

\begin{proposition}
We have $\widetilde{W}=\widetilde{U}_{13}Q^\delta\widetilde{V}_{24}$ and $\widetilde{W^\circ}=\widetilde{V}_{24}Q^\delta\widetilde{U}_{13}$, where 
$$Q^\delta=diag\left(\frac{Q_{ic}Q_{jd}}{Q_{id}Q_{jc}}\right)_{icjd}$$
and where the $U\to\widetilde{U}$ operation is defined by $(\widetilde{U}_{ij})_{kl}=(U_{jl})_{ik}$.
\end{proposition}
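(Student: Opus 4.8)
The plan is to verify both identities by a direct comparison of matrix entries; the only genuine work is the bookkeeping of indices. I would first unfold the left-hand side $\widetilde{W}$ straight from the definitions. Applying the tilde operation to the model matrix $W$ of Proposition 2.4 gives $(\widetilde{W}_{ia,jb})_{kc,ld}=(W_{jb,ld})_{ia,kc}$, and substituting into the formula for $W=U\otimes_QV$ (reading the outer index as $(jb,ld)$ and the inner index as $(ia,kc)$) yields
$$(\widetilde{W}_{ia,jb})_{kc,ld}=\frac{Q_{ja}Q_{lc}}{Q_{jc}Q_{la}}(U_{jl})_{ik}(V_{bd})_{ac}.$$
Recognising $(U_{jl})_{ik}=(\widetilde{U}_{ij})_{kl}$ and $(V_{bd})_{ac}=(\widetilde{V}_{ab})_{cd}$ directly from the defining formula of the tilde operation, this becomes
$$(\widetilde{W}_{ia,jb})_{kc,ld}=\frac{Q_{ja}Q_{lc}}{Q_{jc}Q_{la}}(\widetilde{U}_{ij})_{kl}(\widetilde{V}_{ab})_{cd}.$$

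Next I would compute the right-hand side $\widetilde{U}_{13}Q^\delta\widetilde{V}_{24}$ in the four-leg algebra $M_M\otimes M_N\otimes M_M\otimes M_N$, under the identification in which the row multi-index of $\widetilde{W}$ is $(i,a,k,c)$ and its column multi-index is $(j,b,l,d)$. Since $Q^\delta$ is diagonal, the sum over the intermediate leg indices collapses at once: the identity legs of $\widetilde{U}_{13}$ (legs $2,4$) and of $\widetilde{V}_{24}$ (legs $1,3$) force the intermediate index to be $(j,a,l,c)$, at which $Q^\delta$ reads off exactly the weight $Q_{ja}Q_{lc}/(Q_{jc}Q_{la})$, so that
$$(\widetilde{U}_{13}Q^\delta\widetilde{V}_{24})_{(i,a,k,c),(j,b,l,d)}=(\widetilde{U}_{ij})_{kl}\cdot\frac{Q_{ja}Q_{lc}}{Q_{jc}Q_{la}}\cdot(\widetilde{V}_{ab})_{cd}.$$
Comparing with the expression for $\widetilde{W}$ above gives the first identity. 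The second is obtained the same way: unfolding $\widetilde{W^\circ}$ from the formula of Proposition 2.5 gives $(\widetilde{W^\circ}_{ia,jb})_{kc,ld}=\frac{Q_{ib}Q_{kd}}{Q_{id}Q_{kb}}(\widetilde{U}_{ij})_{kl}(\widetilde{V}_{ab})_{cd}$, and reversing the order of the factors to $\widetilde{V}_{24}Q^\delta\widetilde{U}_{13}$ produces the intermediate index $(i,b,k,d)$, hence the matching weight $Q_{ib}Q_{kd}/(Q_{id}Q_{kb})$.

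The computation is entirely mechanical once conventions are fixed, so the step I would carry out most carefully — and the only place an error could creep in — is the index dictionary: tracking how the doubled indices $(ia),(kc)$ of the $(MN)$-dimensional model $W$ are distributed across the four legs, how the tilde operation permutes the four indices inside each of $U$, $V$, $W$, and which legs $Q^\delta$ occupies. Everything else is bilinearity together with the collapse of a diagonal matrix product. As sanity checks one can confirm consistency with the connecting formulae $W'=U'\,{}_Q\!\otimes\,V'$ and $W^{\circ\prime}=U'\otimes_QV'$ of Proposition 2.5, and observe that specialising $Q$ to the all-ones matrix sends $Q^\delta$ to the identity and recovers the plain tensor product $\widetilde{W}=\widetilde{U}_{13}\widetilde{V}_{24}$.
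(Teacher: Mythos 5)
Your proposal is correct and follows essentially the same route as the paper: both unfold $\widetilde{W}$ entrywise via $(\widetilde{W}_{ia,jb})_{kc,ld}=(W_{jb,ld})_{ia,kc}=\frac{Q_{ja}Q_{lc}}{Q_{jc}Q_{la}}(\widetilde{U}_{ij})_{kl}(\widetilde{V}_{ab})_{cd}$ and then identify the diagonal weight as $Q^\delta$ sandwiched between $\widetilde{U}_{13}$ and $\widetilde{V}_{24}$ (the paper phrases this as factoring a sum of elementary tensors, you phrase it as collapsing the intermediate index, which is the same computation). The index bookkeeping and the second identity are handled identically, so there is nothing to add.
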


\begin{proof}
According to the definition of $W$ in Proposition 2.4, we have:
\begin{eqnarray*}
\widetilde{W}
&=&\sum_{iajb}\sum_{kcld}e_{ia,jb}\otimes e_{kc,ld}\frac{Q_{ja}Q_{lc}}{Q_{jc}Q_{la}}(U_{jl})_{ik}(V_{bd})_{ac}\\
&=&\sum_{iajb}\sum_{kcld}e_{ij}\otimes e_{ab}\otimes e_{kl}\otimes e_{cd}\frac{Q_{ja}Q_{lc}}{Q_{jc}Q_{la}}(\widetilde{U}_{ij})_{kl}(\widetilde{V}_{ab})_{cd}\\
&=&\widetilde{U}_{13}\left(\sum_{jalc}e_{jj}\otimes e_{aa}\otimes e_{ll}\otimes e_{cc}\frac{Q_{ja}Q_{lc}}{Q_{jc}Q_{la}}\right)\widetilde{V}_{24}
\end{eqnarray*}

We recognize in the middle the diagonal matrix $Q^\delta$ in the statement, and we are therefore done with the proof of the first formula. Similarly, we have:
\begin{eqnarray*}
\widetilde{W^\circ}
&=&\sum_{iajb}\sum_{kcld}e_{ia,jb}\otimes e_{kc,ld}\frac{Q_{ib}Q_{kd}}{Q_{kb}Q_{id}}(U_{jl})_{ik}(V_{bd})_{ac}\\
&=&\sum_{iajb}\sum_{kcld}e_{ij}\otimes e_{ab}\otimes e_{kl}\otimes e_{cd}\frac{Q_{ib}Q_{kd}}{Q_{kb}Q_{id}}(\widetilde{U}_{ij})_{kl}(\widetilde{V}_{ab})_{cd}\\
&=&\widetilde{V}_{24}\left(\sum_{ibkd}e_{ii}\otimes e_{bb}\otimes e_{kk}\otimes e_{dd}\frac{Q_{ib}Q_{kd}}{Q_{kb}Q_{id}}\right)\widetilde{U}_{13}
\end{eqnarray*}

But this gives the second formula in the statement, and we are done.
\end{proof}

We should mention the above description of the deformed tensor products is in fact not very enlightening, because the operation $U\to\widetilde{U}$ given by $(\widetilde{U}_{ij})_{kl}=(U_{jl})_{ik}$ does not map models to models, or projective models to projective models, even in the most simple cases. As an example here, for a Fourier matrix model, $(U_{ij})_{kl}=F_{i-j,k-l}$, where $F=F_X$ is the Fourier matrix of a finite abelian group $X$, we have $(\widetilde{U}_{ij})_{kl}=F_{j-l,i-k}$. Now since we have $(\widetilde{U}_{ij}^*)_{kl}=F_{j-k,l-i}$, we see that the matrices $\widetilde{U}_{ij}$ are not self-adjoint.

Let us study now the truncated moments. First, we have:

\begin{lemma}
The truncated moments for $W=U\otimes_QV$ are given by
$$c_p^r
=\frac{1}{(MN)^r}\sum_{ib}\Delta_U(i)\Delta_{V'}(b^t)\frac{Q_{i_1^1b_1^1}Q_{i_1^2b_2^1}}{Q_{i_1^1b_2^1}Q_{i_1^2b_1^1}}\ldots\frac{Q_{i_p^1b_p^1}Q_{i_p^2b_1^1}}{Q_{i_p^1b_1^1}Q_{i_p^2b_p^1}}\ldots\ldots\frac{Q_{i_1^rb_1^r}Q_{i_1^1b_2^r}}{Q_{i_1^rb_2^r}Q_{i_1^1b_1^r}}\ldots\frac{Q_{i_p^rb_p^r}Q_{i_p^1b_1^r}}{Q_{i_p^rb_1^r}Q_{i_p^1b_p^r}}$$
where $\Delta_U(i)=M^r\cdot(T_p^U)_{i_1^1\ldots i_p^1,i_1^2\ldots i_p^2}\ldots\ldots(T_p^U)_{i_1^r\ldots i_p^r,i_1^1\ldots i_p^1}$, for $i\in M_{r\times p}(1,\ldots,M)$.
\end{lemma}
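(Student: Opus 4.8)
The plan is to start from Proposition 1.6 (2) and to expand the trace $Tr(T_p^r)$ in coordinates. Since $W$ is a projective model, all entries $W_{ia,jb}$ are self-adjoint, so the multi-index of exponents is trivial and the relevant truncated moment is simply $c_p^r=Tr(T_p^r)$, with $(T_p)_{IJ}=tr(W_{I_1J_1}\cdots W_{I_pJ_p})$. Writing the $r$-th power of $T_p$ as a sum of products of entries, I would introduce two families of indices: the \emph{operator} indices $i_s^t\in\{1,\ldots,M\}$ and $a_s^t\in\{1,\ldots,N\}$, ranging over $s\in\{1,\ldots,p\}$ (the tensor direction) and $t\in\{1,\ldots,r\}$ (the time direction), labelling the entries $W_{i_s^ta_s^t,\,i_s^{t+1}a_s^{t+1}}$; and the \emph{matrix} indices $k_s^t,c_s^t$ coming from expanding each of the $r$ normalized traces $tr$ on $M_{MN}(\mathbb C)$, every such trace contributing a factor $\frac{1}{MN}$ together with a cyclic contraction in the tensor direction. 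All index shifts $t\mapsto t+1$ and $s\mapsto s+1$ are read cyclically, modulo $r$ and modulo $p$ respectively.

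Next I would substitute the defining formula of $W=U\otimes_QV$ from Proposition 2.4, namely
$$(W_{i_s^ta_s^t,i_s^{t+1}a_s^{t+1}})_{k_s^tc_s^t,k_{s+1}^tc_{s+1}^t}=\frac{Q_{i_s^tc_s^t}Q_{i_s^{t+1}c_{s+1}^t}}{Q_{i_s^tc_{s+1}^t}Q_{i_s^{t+1}c_s^t}}(U_{i_s^ti_s^{t+1}})_{k_s^tk_{s+1}^t}(V_{a_s^ta_s^{t+1}})_{c_s^tc_{s+1}^t}$$
and split the resulting summand into three groups: the scalar $Q$-phases, which involve only the $i$ and $c$ indices; the $U$-factors, which involve only the $i$ and $k$ indices; and the $V$-factors, which involve only the $a$ and $c$ indices. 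The crucial feature is that the $k$-indices occur \emph{only} in the $U$-factors and the $a$-indices \emph{only} in the $V$-factors, so these two sums decouple. Summing over all $k_s^t$ gives exactly $\sum_k\prod_{t,s}(U_{i_s^ti_s^{t+1}})_{k_s^tk_{s+1}^t}=\Delta_U(i)$, the factor $M^r$ in the definition of $\Delta_U$ being precisely what cancels the trace normalizations attached to the $U$-block; the global prefactor $\frac{1}{(MN)^r}$ is left untouched.

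It then remains to carry out the sum over the $a$-indices in the $V$-block. Here I would invoke the duality $V'=\Sigma V$, that is $(V_{a_s^ta_s^{t+1}})_{c_s^tc_{s+1}^t}=(V'_{c_s^tc_{s+1}^t})_{a_s^ta_s^{t+1}}$, and observe that in the resulting expression the operator index of $V'$ now runs in the tensor direction ($c_s^t\to c_{s+1}^t$) while its matrix index runs in the time direction ($a_s^t\to a_s^{t+1}$) --- exactly the opposite of the situation for $\Delta_U$. Transposing the index array $c$ therefore identifies $\sum_a\prod_{t,s}(V'_{c_s^tc_{s+1}^t})_{a_s^ta_s^{t+1}}$ with $\Delta_{V'}(c^t)$, the $p$- and $r$-roles being interchanged; this is the same $p\leftrightarrow r$, $i\leftrightarrow j$ swap already used in the proof of Theorem 1.10. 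After renaming $c\to b$ and reading off the surviving $Q$-phases factor by factor, the $(s,t)=(1,1)$ term being $\frac{Q_{i_1^1b_1^1}Q_{i_1^2b_2^1}}{Q_{i_1^1b_2^1}Q_{i_1^2b_1^1}}$ and the $(s,t)=(p,r)$ term being $\frac{Q_{i_p^rb_p^r}Q_{i_p^1b_1^r}}{Q_{i_p^rb_1^r}Q_{i_p^1b_p^r}}$, one recovers the stated formula. The main obstacle is entirely bookkeeping: keeping the two cyclic directions straight and correctly recognizing the transposed array $b^t$ as the argument of $\Delta_{V'}$; once the duality $V'=\Sigma V$ is applied, this reduces to the permutation-of-indices argument of Theorem 1.10.
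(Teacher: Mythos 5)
Your proposal is correct and follows essentially the same route as the paper's proof: expand $Tr(T_p^r)$ in double-index coordinates, substitute the defining formula of $W=U\otimes_QV$, decouple the $U$-sum from the $V$-sum (the latter via $(V_{ab})_{cd}=(V'_{cd})_{ab}$ together with a transposition of the index array, which swaps the roles of $p$ and $r$), and recognize $\Delta_U(i)$ and $\Delta_{V'}(b^t)$ while the $Q$-phases survive untouched. The only slip is the citation of ``Theorem 1.10'' for the index-permutation argument, which in the paper is Theorem 1.9; this is immaterial to the argument.
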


\begin{proof}
We will use several times, in forward and in backwards form, the following computation, which already appeared in the proof of Theorem 1.9 (1) above:
\begin{eqnarray*}
c_p^r(U)
&=&\sum_i(T_p^U)_{i_1^1\ldots i_p^1,i_1^2\ldots i_p^2}\ldots\ldots(T_p^U)_{i_1^r\ldots i_p^r,i_1^1\ldots i_p^1}\\
&=&\sum_itr(U_{i_1^1i_1^2}\ldots U_{i_p^1i_p^2})\ldots\ldots tr(U_{i_1^ri_1^1}\ldots U_{i_p^ri_p^1})\\
&=&\frac{1}{M^r}\sum_i\sum_j(U_{i_1^1i_1^2})_{j_1^1j_2^1}\ldots(U_{i_p^1i_p^2})_{j_p^1j_1^1}\ldots\ldots(U_{i_1^ri_1^1})_{j_1^rj_2^r}\ldots(U_{i_p^ri_p^1})_{j_p^rj_1^r}
\end{eqnarray*}

In double index notation, with $U_{ij}$ replaced by $W_{ia,jb}$, the formula is:
\begin{eqnarray*}
c_p^r(W)
&=&\frac{1}{(MN)^r}\sum_{ia}\sum_{jb}(W_{i_1^1a_1^1,i_1^2a_1^2})_{j_1^1b_1^1,j_2^1b_2^1}\ldots\ldots(W_{i_p^1a_p^1,i_p^2a_p^2})_{j_p^1b_p^1,j_1^1b_1^1}\\
&&\hskip28mm\ldots\ldots\\
&&\hskip28mm(W_{i_1^ra_1^r,i_1^1a_1^1})_{j_1^rb_1^r,j_2^rb_2^r}\ldots\ldots(W_{i_p^ra_p^r,i_p^1a_p^1})_{j_p^rb_p^r,j_1^rb_1^r}
\end{eqnarray*}

Now with $W_{ia,jb}$ being as in Proposition 2.4 above, we obtain:
\begin{eqnarray*}
c_p^r(W)
&=&\frac{1}{(MN)^r}\sum_{ib}\frac{Q_{i_1^1b_1^1}Q_{i_1^2b_2^1}}{Q_{i_1^1b_2^1}Q_{i_1^2b_1^1}}\ldots\frac{Q_{i_p^1b_p^1}Q_{i_p^2b_1^1}}{Q_{i_p^1b_1^1}Q_{i_p^2b_p^1}}\ldots\ldots\frac{Q_{i_1^rb_1^r}Q_{i_1^1b_2^r}}{Q_{i_1^rb_2^r}Q_{i_1^1b_1^r}}\ldots\frac{Q_{i_p^rb_p^r}Q_{i_p^1b_1^r}}{Q_{i_p^rb_1^r}Q_{i_p^1b_p^r}}\\
&&\hskip15mm\sum_j(U_{i_1^1i_1^2})_{j_1^1j_2^1}\ldots(U_{i_p^1i_p^2})_{j_p^1j_1^1}\ldots\ldots(U_{i_1^ri_1^1})_{j_1^rj_2^r}\ldots(U_{i_p^ri_p^1})_{j_p^rj_1^r}\\
&&\hskip15mm\sum_a(V_{a_1^1a_1^2})_{b_1^1b_2^1}\ldots(V_{a_p^1a_p^2})_{b_p^1b_1^1}\ldots\ldots(V_{a_1^ra_1^1})_{b_1^rb_2^r}\ldots(V_{a_p^ra_p^1})_{b_p^rb_1^r}
\end{eqnarray*}

The middle sum can be compacted by using the computation in the beginning of this proof. The last sum can be compacted too, by using a similar computation, after switching indices by using $(V_{ab})_{cd}=(V'_{cd})_{ab}$. We obtain the following formula:
\begin{eqnarray*}
c_p^r(W)
&=&\frac{1}{(MN)^r}\sum_{ib}\frac{Q_{i_1^1b_1^1}Q_{i_1^2b_2^1}}{Q_{i_1^1b_2^1}Q_{i_1^2b_1^1}}\ldots\frac{Q_{i_p^1b_p^1}Q_{i_p^2b_1^1}}{Q_{i_p^1b_1^1}Q_{i_p^2b_p^1}}\ldots\ldots\frac{Q_{i_1^rb_1^r}Q_{i_1^1b_2^r}}{Q_{i_1^rb_2^r}Q_{i_1^1b_1^r}}\ldots\frac{Q_{i_p^rb_p^r}Q_{i_p^1b_1^r}}{Q_{i_p^rb_1^r}Q_{i_p^1b_p^r}}\\
&&\hskip22mm M^r\cdot (T_p^U)_{i_1^1\ldots i_p^1,i_1^2\ldots i_p^2}\ldots\ldots(T_p^U)_{i_1^r\ldots i_p^r,i_1^1\ldots i_p^1}\\
&&\hskip22mm N^p\cdot (T_r^{V'})_{b_1^1\ldots b_1^r,b_2^1\ldots b_2^r}\ldots\ldots(T_r^{V'})_{b_p^1\ldots b_p^r,b_1^1\ldots b_1^r}
\end{eqnarray*}

But this gives the formula in the statement, and we are done.
\end{proof}

In order to further advance, we use the following notion:

\begin{definition}
A model $\pi:A\to M_N(\mathbb C)$, mapping $u_{ij}\to U_{ij}$, is called positive if
$$tr(U_{i_1j_1}\ldots U_{i_pj_p})\geq0$$
for any $p\in\mathbb N$, and any choice of the indices $i_1,\ldots,i_p$ and $j_1,\ldots,j_p$.
\end{definition}

In other words, the model is called positive if the functional $\int_G^1=tr\circ\pi$ from Proposition 1.5 (3) is positive on all the products of standard coordinates $u_{i_1j_1}\ldots u_{i_pj_p}$. Equivalently, the matrix $T_p^U$ in Proposition 1.5 (4) must have positive entries, for any $p\in\mathbb N$.

Once again, the basic examples here come from the Hadamard matrices. In the context of Proposition 2.3 above, with the notations there, we have:
\begin{eqnarray*}
tr(U_{i_1j_1}\ldots U_{i_pj_p})
&=&\frac{1}{N}\sum_k(U_{i_1j_1})_{k_1k_2}\ldots\ldots(U_{i_pj_p})_{k_pk_1}\\
&=&\frac{1}{N^{p+1}}\sum_k\frac{H_{i_1k_1}H_{j_1k_2}}{H_{i_1k_2}H_{j_1k_1}}\ldots\ldots\frac{H_{i_pk_p}H_{j_pk_1}}{H_{i_pk_1}H_{j_pk_p}}\\
&=&\frac{1}{N^{p+1}}\sum_{k_1}\frac{H_{i_1k_1}H_{j_pk_1}}{H_{j_1k_1}H_{i_pk_1}}\ldots\ldots\sum_{k_p}\frac{H_{i_pk_p}H_{j_{p-1}k_p}}{H_{j_pk_p}H_{i_{p-1}k_p}}\\
&=&\frac{1}{N^{p+1}}\left\langle\frac{H_{i_1}}{H_{j_1}},\frac{H_{i_p}}{H_{j_p}}\right\rangle\ldots\ldots\left\langle\frac{H_{i_p}}{H_{j_p}},\frac{H_{i_{p-1}}}{H_{j_{p-1}}}\right\rangle
\end{eqnarray*}

In particular, if the quantities $C_{abcd}=\frac{1}{N}<\frac{H_a}{H_b},\frac{H_c}{H_d}>$ are all positive, then the positivity condition is satisfied. Observe that this is the case for the Fourier matrix $F_X$ of a finite abelian group $X$, where we have $C_{abcd}=\delta_{a-b,c-d}$, with all indices taken in $X$.

Now back to the general case, we have the following result, that we believe of interest, and which is the best one that we could find at the abstract level:

\begin{theorem}
If $U,V'$ come from positive projective models, with $W=U\otimes_QV$ we have:
$$|c_p^r(W)|\leq c_p^r(U)c_p^r(V)$$
Thus, the moments of $\mu^r_W$ are bounded by those of the usual tensor product.
\end{theorem}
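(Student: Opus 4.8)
The plan is to start directly from the truncated-moment formula for $W=U\otimes_QV$ established in Lemma 2.8, and to estimate it by the triangle inequality. The crucial observation is that every factor of the form $\frac{Q_{\cdots}Q_{\cdots}}{Q_{\cdots}Q_{\cdots}}$ appearing in that formula is a ratio of entries of $Q\in M_{M\times N}(\mathbb T)$, hence lies on the unit circle. Consequently, when I take the modulus of $c_p^r(W)$ and pull it inside the sum over $i,b$, all these $Q$-factors have modulus $1$ and disappear, leaving
\[
|c_p^r(W)|\leq\frac{1}{(MN)^r}\sum_{ib}|\Delta_U(i)|\,|\Delta_{V'}(b^t)|.
\]

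Next I would invoke the positivity hypotheses. Since $U$ comes from a positive projective model, every entry $(T_p^U)_{\cdots}=tr(U_{i_1j_1}\cdots U_{i_pj_p})$ is nonnegative, so the product $\Delta_U(i)=M^r(T_p^U)_{\cdots}\cdots(T_p^U)_{\cdots}$ is $\geq0$; likewise $\Delta_{V'}(b^t)\geq0$ because $V'$ comes from a positive projective model. Hence the moduli may be dropped, $|\Delta_U(i)|=\Delta_U(i)$ and $|\Delta_{V'}(b^t)|=\Delta_{V'}(b^t)$. Now, with the $Q$-factors gone, the summand is a product of a function of $i$ alone and a function of $b$ alone, so the double sum factorizes:
\[
|c_p^r(W)|\leq\frac{1}{(MN)^r}\Big(\sum_i\Delta_U(i)\Big)\Big(\sum_b\Delta_{V'}(b^t)\Big).
\]

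It then remains to identify the two factors. Summing $\Delta_U(i)=M^r(T_p^U)_{\cdots}\cdots(T_p^U)_{\cdots}$ over $i$ reproduces, via the computation recalled at the start of the proof of Lemma 2.8, the quantity $M^r c_p^r(U)$; similarly $\sum_b\Delta_{V'}(b^t)=N^p c_r^p(V')$. Substituting gives $|c_p^r(W)|\leq N^{p-r}c_p^r(U)c_r^p(V')$. Finally I would apply the duality of Theorem 1.9 (1) to the model $V$ of size $N$, in the form $c_r^p(V')=N^{r-p}c_p^r(V)$, which cancels the prefactor exactly and yields $|c_p^r(W)|\leq c_p^r(U)c_p^r(V)$. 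Setting $Q=1$ in Lemma 2.8 shows that this right-hand side is precisely $c_p^r(U\otimes V)$, the moment of the plain tensor product, which proves the final assertion.

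The argument is essentially a sequence of bookkeeping steps once the structural inputs are in place, so the main obstacle is conceptual rather than computational: one must recognize that positivity is needed not merely to make $c_p^r(U),c_p^r(V)$ nonnegative, but precisely so that after the triangle inequality the identity $\sum_i|\Delta_U(i)|=\sum_i\Delta_U(i)$ holds, which is exactly what lets the modulus bound be re-expressed in terms of the untwisted moments rather than some strictly larger dominating quantity. The only other place where care is required is tracking the powers of $M$ and $N$ and applying the duality with the correct interchange $r\leftrightarrow p$, where an exponent slip would break the final cancellation.
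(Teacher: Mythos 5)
Your proof is correct and follows essentially the same route as the paper's: triangle inequality using $|Q_{ij}|=1$ in the truncated-moment formula of Lemma 2.7, positivity to drop the moduli on $\Delta_U$ and $\Delta_{V'}$, factorization of the double sum, and the duality $c_r^p(V')=N^{r-p}c_p^r(V)$ from Theorem 1.9 (1) to cancel the power of $N$. The only discrepancy is a harmless reference slip (the moment formula is Lemma 2.7, not 2.8).
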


\begin{proof}
By using $|Q_{ij}|=1$ for any $i,j$, the formula in Lemma 2.7 gives:
$$|c_p^r(W)|\leq\frac{1}{(MN)^r}\sum_{ib}|\Delta_U(i)\Delta_{V'}(b^t)|$$

Now observe that the computation in the beginning of the proof of Lemma 2.7 reads $\sum_i\Delta_U(i)=M^rc_p^r(U)$. Thus, assuming that we have positivity, the $\Delta$ quantities on the right are both positive, we can remove the absolute value sign, and we obtain:
\begin{eqnarray*}
|c_p^r(W)|
&\leq&\frac{1}{(MN)^r}\sum_{ib}\Delta_U(i)\Delta_{V'}(b^t)
=\frac{1}{(MN)^r}\cdot M^rc_p^r(U)\cdot N^pc_r^p(V')\\
&=&N^{p-r}c_p^r(U)c_r^p(V')
=N^{p-r}c_p^r(U)N^{r-p}c_p^r(V)\\
&=&c_p^r(U)c_p^r(V)
\end{eqnarray*}

Here we have used Theorem 1.9 (1). Now since this formula tells us that the moments of $\mu_W^r$ are bounded by those of $\mu^r_U\times\mu^r_V$, this gives the last assertion as well.
\end{proof}

\section{Abelian groups}

In this section we further restrict the attention, to a very special class of projective models. Generally speaking, the problem is that the complex Hadamard matrices, which are the main source of projective models, are quite complicated objects, and the only elementary example is the Fourier matrix $F_X$ of a finite abelian group $X$. See \cite{tzy}.

Let us first recall the construction of this matrix:

\begin{proposition}
Let $X=\mathbb Z_{N_1}\times\ldots\times\mathbb Z_{N_k}$ be a finite abelian group, and consider the matrix $F_X=F_{N_1}\otimes\ldots\otimes F_{N_k}$, where $F_N=(w^{ij})$, with $w=e^{2\pi i/N}$.
\begin{enumerate}
\item In the cyclic group case, $X=\mathbb Z_N$, we have $F_X=F_N$.
 
\item In general, $F_X$ is the matrix of the Fourier transform over $X$.

\item With $F=F_X$ we have $F_{i+j,k}=F_{ik}F_{jk}$, $F_{i,j+k}=F_{ij}F_{ik}$, $F_{-i,j}=F_{i,-j}=\bar{F}_{ij}$.
\end{enumerate}
\end{proposition}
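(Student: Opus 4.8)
The plan is to treat the three assertions in increasing order of content, reducing everything to a direct computation in the cyclic case. Assertion (1) is immediate from the definition of $F_X$: when $X=\mathbb Z_N$ the defining tensor product $F_{N_1}\otimes\ldots\otimes F_{N_k}$ has a single factor, so $F_X=F_N$, where $F_N=(w^{ij})$ with $w=e^{2\pi i/N}$.

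For assertion (3), I would first establish the identities in the cyclic case $X=\mathbb Z_N$, where everything is an elementary calculation with roots of unity. Writing $F_{ij}=w^{ij}$, the relation $F_{i+j,k}=w^{(i+j)k}=w^{ik}w^{jk}=F_{ik}F_{jk}$ holds because the exponent is additive, and symmetrically for $F_{i,j+k}$; the conjugation identities follow from $|w|=1$, which gives $w^{-ij}=\overline{w^{ij}}$. To pass to a general $X=\mathbb Z_{N_1}\times\ldots\times\mathbb Z_{N_k}$, I would use the fact that the entries of a tensor product of matrices factor over the legs: writing indices as tuples $i=(i_1,\ldots,i_k)$ and setting $w_a=e^{2\pi i/N_a}$, one has
$$F_{ij}=\prod_{a=1}^k(F_{N_a})_{i_aj_a}=\prod_{a=1}^kw_a^{i_aj_a}.$$
Since addition in $X$ is componentwise, each of the three identities then follows by applying the cyclic-case computation in every leg $a$ and multiplying.

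Assertion (2) is the conceptual reinterpretation of (3). The point is that the maps $\chi_k:X\to\mathbb T$ defined by $\chi_k(i)=F_{ik}$ are precisely the characters of $X$: the first identity in (3) says exactly that each $\chi_k$ is a group homomorphism, and as $k$ ranges over $X$ these exhaust the dual group $\widehat X$ (here one uses $|\widehat X|=|X|$ together with the orthogonality of the rows of $F$, already recorded in the Hadamard property). Consequently the linear map with matrix $F$ sends a function on $X$ to its expansion in the characters $\chi_k$, which is the definition of the Fourier transform over $X$, giving (2).

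I do not expect a genuine obstacle here, since all three parts are standard; the only point requiring care is the bookkeeping for the general product group in (3), namely checking that group addition matches the componentwise structure of the tensor product, so that the cyclic identities lift cleanly leg by leg.
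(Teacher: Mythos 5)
Your proposal is correct and follows essentially the same route as the paper, which simply records that (1) is clear from definitions, (3) is an elementary root-of-unity computation lifted componentwise, and (2) follows from the compatibility between the product of groups and the tensor product of matrices; you have merely supplied the details the paper omits. No issues.
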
 

\begin{proof}
All these results are well-known:

(1) This is clear from definitions.

(2) This is well-known in the cyclic group case, and in general, it follows by using the compatibility between the product of groups $\times$ and the tensor product of matrices $\otimes$.

(3) This is clear in the cyclic group case, and then in general as well. 
\end{proof}

Observe that each $F_N$, and hence each $F_X$, is a complex Hadamard matrix.

Now let us go back to Proposition 2.3 above. By using the formulae in (3) above, we see that the matrix constructed there, with $H=F_X$, is given by:
$$(U_{ij})_{kl}=\frac{1}{N}\cdot\frac{F_{ik}F_{jl}}{F_{il}F_{jk}}=\frac{1}{N}(F_{ik}F_{i,-l})(F_{-j,k}F_{-j,-l})=\frac{1}{N}F_{i,k-l}F_{-j,k-l}=\frac{1}{N}F_{i-j,k-l}$$

Thus, the projective models associated to the Fourier matrices, coming from Proposition 2.3 above, can be in fact introduced directly, as follows:

\begin{definition}
Associated to a finite abelian group $X$ is the projective model 
$$\pi:C(X)\to M_{|X|}(\mathbb C)$$
coming from the matrix $(U_{ij})_{kl}=\frac{1}{N}F_{i-j,k-l}$, where $F=F_X$.
\end{definition}

Observe that the models $U,U'$ fall into the general framework of Proposition 1.8 (2) above, but with both $U,U'$ being twisted by the Fourier transform.

Now let $X,Y$ be finite abelian groups, and let us try to understand the projective model constructed by deforming the tensor product of the corresponded Fourier models:

\begin{definition}
Given two finite abelian groups $X,Y$, we consider the corresponding Fourier models $U,V$, we construct $W=U\otimes_QV$ as in Proposition 2.5, and we factorize
$$\xymatrix{C(S_{X\times Y}^+)\ar[rr]^{\pi_Q}\ar[rd]&&M_{X\times Y}(\mathbb C)\\&C(G_Q)\ar[ur]_\pi&}$$
with $C(G_Q)$ being the Hopf image of $\pi_Q$, as in Definition 1.2.
\end{definition}

Explicitely computing the compact quantum group $G_Q$, as function of the parameter matrix $Q\in M_{X\times Y}(\mathbb T)$, and understanding the random walk on the corresponding group dual $\Gamma_Q=\widehat{G}_Q$, will be our main purpose, in the reminder of this paper.

In order to do so, we use the following notion, from \cite{bic}:

\begin{definition}
Let $C(S_M^+)\to A$ and $C(S_N^+)\to B$ be Hopf algebra quotients, with fundamental corepresentations denoted $u,v$. We let
$$A*_wB=A^{*N}*B/<[u_{ab}^{(i)},v_{ij}]=0>$$
with the Hopf algebra structure making $w_{ia,jb}=u_{ab}^{(i)}v_{ij}$ a corepresentation.
\end{definition}

The fact that we have indeed a Hopf algebra follows from the fact that $w$ is magic. In terms of quantum groups, if $A=C(G)$, $B=C(H)$, we write $A*_wB=C(G\wr_*H)$:
$$C(G)*_wC(H)=C(G\wr_*H)$$

The $\wr_*$ operation is then the free analogue of $\wr$, the usual wreath product. See \cite{bic}.

We will need as well the following elementary lemma: 

\begin{lemma}
If $X$ is a finite abelian group then
$$C(X)=C(S_X^+)/<u_{ij}=u_{kl}|\forall i-j=k-l>$$
with all the indices taken inside $X$.
\end{lemma}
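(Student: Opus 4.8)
The claim is that for a finite abelian group $X$, the group algebra $C(X)$ (viewed as a quotient of $C(S_X^+)$ via the Fourier-type projective model) is precisely the quotient of $C(S_X^+)$ by the relations $u_{ij}=u_{kl}$ whenever $i-j=k-l$ in $X$. The plan is to identify the right-hand side quotient $B=C(S_X^+)/\langle u_{ij}=u_{kl}\mid i-j=k-l\rangle$ explicitly and show it coincides with $C(X)$. Since $C(X)$ is abelian, I would first verify that $B$ is abelian, and then match the two algebras via their generators.

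\textbf{Key steps.} First I would introduce, for each $g\in X$, the element $p_g=u_{ij}$ for any pair with $i-j=g$; the defining relations of $B$ make this well-defined, so $B$ is generated by the elements $\{p_g\}_{g\in X}$. Next I would translate the magic condition on $u$ into relations on the $p_g$. The row/column sum condition $\sum_j u_{ij}=1$ becomes $\sum_{g\in X}p_g=1$, and I would examine what the projection and orthogonality relations $u_{ij}u_{ik}=\delta_{jk}u_{ij}$ give in terms of the $p_g$. The crucial point is that rewriting $u_{ij}u_{ik}$ as $p_{i-j}p_{i-k}$ and summing appropriately forces the $p_g$ to be mutually orthogonal projections (so $p_g p_h=\delta_{gh}p_g$), hence commuting, which makes $B$ commutative. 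Then I would check that the comultiplication $\Delta(u_{ij})=\sum_k u_{ik}\otimes u_{kj}$ descends to $B$ and yields $\Delta(p_g)=\sum_{h\in X}p_h\otimes p_{g-h}$; comparing this with the group-algebra comultiplication $\Delta(\delta_g)=\sum_{h}\delta_h\otimes\delta_{h^{-1}g}$ on $C(X)=C^*(\widehat X)$ shows the identification $p_g\leftrightarrow\delta_g$ is a Hopf algebra isomorphism.

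\textbf{Identifying the quotient with $C(X)$.} Having shown $B$ is the commutative Hopf algebra with basis a family of orthogonal projections $\{p_g\}_{g\in X}$ summing to $1$ and with the above coproduct, I would recognize $B$ as the function algebra on the dual group, which is exactly the Fourier-dual description of $C(X)$; equivalently, the model $\pi:C(S_X^+)\to M_{|X|}(\mathbb C)$ of Definition 3.2, with $(U_{ij})_{kl}=\frac1N F_{i-j,k-l}$, sends $u_{ij}$ to a matrix depending only on $i-j$, so it factors through $B$, and I would check that the resulting map $B\to C(X)$ is an isomorphism (it is surjective by construction and injective by comparing dimensions, both algebras having dimension $|X|$).

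\textbf{Main obstacle.} The delicate step is the combinatorial verification that the magic relations, once rewritten in the $p_g$ variables, collapse to precisely the orthogonal-projection relations $p_g p_h=\delta_{gh}p_g$ and the partition-of-unity relation, with no additional or missing constraints. In particular one must ensure that commutativity is genuinely forced (rather than merely permitted) by the identifications $u_{ij}=u_{kl}$, since this is what distinguishes the classical $C(X)$ from a larger quantum quotient. I expect this bookkeeping with the index shifts modulo $X$ to be the main point requiring care, whereas the Hopf-structure compatibility and the final dimension count are routine.
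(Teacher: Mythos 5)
Your proposal is correct. The first half coincides with the paper's argument: the computation $p_g p_h=u_{ij}u_{i,i-h}=\delta_{j,i-h}u_{ij}=\delta_{gh}p_g$ (using the identification $u_{kl}=u_{i,i-h}$ when $k-l=h$, then orthogonality along row $i$) is exactly how the paper establishes commutativity. Where you diverge is in the identification step. The paper, once commutativity is known, views the quotient as $C(Y)$ for a subgroup $Y\subset S_X$ and determines $Y$ pointwise: the relation $u_{ij}=u_{kl}$ evaluated on a permutation $\sigma$ forces $\sigma(i)-i$ to be constant, so $\sigma$ is a translation and $Y=X$. You instead stay inside the algebra: the quotient is spanned by the $|X|$ mutually orthogonal projections $p_g$ summing to $1$, it surjects onto $C(X)$ (since $u_{ij}\mapsto\delta_{i-j}$ under the Cayley quotient depends only on $i-j$), and the dimension count plus the coproduct formula $\Delta(p_g)=\sum_h p_h\otimes p_{g-h}$ gives the Hopf algebra isomorphism. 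Your route is slightly longer but more self-contained, as it avoids invoking the fact that a commutative Hopf algebra quotient of $C(S_X^+)$ is of the form $C(Y)$ for a permutation group $Y$; the paper's route is shorter but leans on that structural fact (and, like yours, leaves the verification that the relations generate a Hopf ideal implicit). One small remark: the orthogonality $p_gp_h=\delta_{gh}p_g$ follows from the direct two-term computation alone; no summation over indices is needed, so the "main obstacle" you anticipate is lighter than you fear.
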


\begin{proof}
Observe first that $C(Y)=C(S_X^+)/<u_{ij}=u_{kl}|\forall i-j=k-l>$ is commutative, because $u_{ij}u_{kl}=u_{ij}u_{i,l-k+i}=\delta_{j,l-k+i}u_{ij}$ and $u_{kl}u_{ij}=u_{i,l-k+i}u_{ij}=\delta_{j,l-k+i}u_{ij}$. Thus we have $Y\subset S_X$, and since $u_{ij}(\sigma)=\delta_{i\sigma(j)}$ for any $\sigma\in Y$, we obtain:
$$i-j=k-l\implies(\sigma(j)=i\iff\sigma(l)=k)$$

But this condition tells us precisely that $\sigma(i)-i$ must be independent on $i$, and so $\sigma(i)=i+x$ for some $x\in X$, and so $\sigma\in X$, as desired.
\end{proof}

We can now factorize representation $\pi_Q$ in Definition 3.3, as follows:

\begin{proposition}
We have a factorization
$$\xymatrix{C(S_{X\times Y}^+)\ar[rr]^{\pi_Q}\ar[rd]&&M_{X\times Y}(\mathbb C)\\&C(Y\wr_*X)\ar[ur]_\pi&}$$
given by $U_{ab}^{(i)}=\sum_jW_{ia,jb}$ and by $V_{ij}=\sum_aW_{ia,jb}$, independently of $b$.
\end{proposition}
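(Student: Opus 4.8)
The plan is to exhibit the factorizing representation $\pi : C(Y\wr_* X)\to M_{X\times Y}(\mathbb C)$ directly, by declaring
$$\pi(u_{ab}^{(i)}) = \mathcal U_{ab}^{(i)} := \sum_j W_{ia,jb}, \qquad \pi(v_{ij}) = \mathcal V_{ij} := \sum_a W_{ia,jb},$$
and then checking that these matrices satisfy the relations defining $C(Y\wr_* X)$ in Definition 3.4, so that $\pi$ is well defined; the factorization is then immediate, since $\pi(w_{ia,jb}) = \pi(u_{ab}^{(i)}v_{ij}) = \mathcal U_{ab}^{(i)}\mathcal V_{ij}$ will agree with $\pi_Q(w_{ia,jb}) = W_{ia,jb}$ on generators.

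First I would compute the two candidate images explicitly, using the formula for $W = U\otimes_Q V$ from Proposition 2.4 together with the Fourier expressions of Definition 3.2. The one ingredient here is character orthogonality, in the form $\sum_a(V_{ab})_{cd} = \delta_{cd}$ and $\sum_j(U_{ij})_{kl} = \delta_{kl}$, which are just the column and row sums of the magic matrices $V$ and $U$. The first identity shows that $\mathcal V_{ij}$ is indeed independent of $b$ and satisfies $(\mathcal V_{ij})_{kc,ld} = \delta_{cd}(U_{ij})_{kl}$, i.e.\ $\mathcal V_{ij} = U_{ij}\otimes 1$; hence $(\mathcal V_{ij})$ is magic, and since $(U_{ij})_{kl} = \frac1{|X|}F_{i-j,k-l}$ depends only on $i-j$, Lemma 3.5 shows it factors through $C(X)$. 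For $\mathcal U_{ab}^{(i)}$ one finds $(\mathcal U_{ab}^{(i)})_{kc,ld} = (V_{ab})_{cd}\sum_j\frac{Q_{ic}Q_{jd}}{Q_{id}Q_{jc}}(U_{ij})_{kl}$, whose dependence on $a,b$ is only through $(V_{ab})_{cd} = \frac1{|Y|}F_{a-b,c-d}$, hence only through $a-b$; this is exactly what will let Lemma 3.5 produce the $C(Y)$ quotient on each copy.

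Next I would verify that each family $(\mathcal U_{ab}^{(i)})_{a,b}$ is magic. Since $W$ is magic, the entries $\{W_{ia,jb}\}_{jb}$ of a fixed row $(ia)$ are mutually orthogonal projections summing to $1$, so $\mathcal U_{ab}^{(i)} = \sum_j W_{ia,jb}$ is a projection (a partial row sum) and $\sum_b\mathcal U_{ab}^{(i)} = 1$ is the full row sum. The genuinely new check is the column sum $\sum_a\mathcal U_{ab}^{(i)} = \sum_{aj}W_{ia,jb}$: collapsing $\sum_a(V_{ab})_{cd} = \delta_{cd}$ kills the $Q$-factor (which equals $1$ on the diagonal $c=d$), after which $\sum_j(U_{ij})_{kl} = \delta_{kl}$ yields the identity. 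A projection matrix with all row and column sums equal to $1$ is automatically magic (if $p+q\le 1$ are projections then $pq=0$), so this suffices.

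It then remains to produce the relations binding the copies together, namely the reconstruction $\mathcal U_{ab}^{(i)}\mathcal V_{ij} = W_{ia,jb}$ and the commutation $[\mathcal U_{ab}^{(i)},\mathcal V_{ij}]=0$. Both are short matrix computations, and I expect this to be the delicate point, since the twists $\frac{Q_{ic}Q_{jd}}{Q_{id}Q_{jc}}$ sit asymmetrically and it is not a priori clear that the two orders of the product agree. The mechanism that resolves this is the idempotent relation $U_{ij'}U_{ij} = \delta_{j'j}U_{ij}$ coming from magic-ness of $U$: in either ordering it forces $j'=j$ in the internal sum defining $\mathcal U_{ab}^{(i)}$, collapsing it to the single term carrying precisely the twist $\frac{Q_{ic}Q_{jd}}{Q_{id}Q_{jc}}$, so that one recovers $(W_{ia,jb})_{kc,ld}$ on the nose in both orders, giving commutation and reconstruction simultaneously. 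Granting these relations, the universal property of $A*_wB$ in Definition 3.4 delivers the representation $\pi$, and its agreement with $\pi_Q$ on the generators $w_{ia,jb}$ yields the asserted factorization.
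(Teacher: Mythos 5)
Your proof is correct and follows essentially the same route as the paper: the paper factors $\pi_Q$ through $C(S_Y^+\wr_*S_X^+)$ by citing the standard construction from \cite{bic} and then descends to $C(Y\wr_*X)$ via two applications of Lemma 3.5, while you verify the defining relations of Definition 3.4 directly, which amounts to unpacking that citation. All the computations you outline check out --- the partial row sums $\sum_jW_{ia,jb}$ are projections with row and column sums equal to $1$ (hence magic), the dependences on $i-j$ and $a-b$ give the abelian quotients, and the collapse of the internal sum via $U_{ij'}U_{ij}=\delta_{j'j}U_{ij}$ yields both the commutation and the reconstruction $\mathcal U_{ab}^{(i)}\mathcal V_{ij}=W_{ia,jb}$.
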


\begin{proof}
With $K=F_X,L=F_Y$ and $M=|X|,N=|Y|$, the formula of the magic matrix $W\in M_{X\times Y}(M_{X\times Y}(\mathbb C))$ associated to $H=K\otimes_QL$ is:
$$(W_{ia,jb})_{kc,ld}=\frac{1}{MN}\cdot\frac{Q_{ic}Q_{jd}}{Q_{id}Q_{jc}}\cdot\frac{K_{ik}K_{jl}}{K_{il}K_{jk}}\cdot\frac{L_{ac}L_{bd}}{L_{ad}L_{bc}}=\frac{1}{MN}\cdot\frac{Q_{ic}Q_{jd}}{Q_{id}Q_{jc}}\cdot K_{i-j,k-l}L_{a-b,c-d}$$

Our claim that the representation $\pi_Q$ constructed in Definition 3.3 can be factorized in three steps, up to the factorization in the statement, as follows:
$$\xymatrix{C(S_{X\times Y}^+)\ar[rr]^{\pi_Q}\ar[d]&&M_{X\times Y}(\mathbb C)\\C(S_Y^+\wr_*S_X^+)\ar[r]\ar@{.>}[rru]&C(S_Y^+\wr_*X)\ar[r]\ar@{.>}[ur]&C(Y\wr_*X)\ar@{.>}[u]}$$

Indeed, the construction of the map on the left is standard, see \cite{bic}, and this produces the first factorization. Regarding the second factorization, this comes from the fact that since the elements $V_{ij}$ depend on $i-j$, they satisfy the defining relations for the quotient algebra $C(S_X^+)\to C(X)$, coming from Lemma 3.5. Finally, regarding the third factorization, observe that the above matrix $W_{ia,jb}$ depends only on $a-b$. By summing over $j$ we obtain that $U_{ab}^{(i)}$ depends only on $a-b$, and by using Lemma 3.5, we are done.
\end{proof}

In order to further factorize the representation in Proposition 3.6, we use:

\begin{definition}
If $H\curvearrowright\Gamma$ is a finite group acting by automorphisms on a discrete group, the corresponding crossed coproduct Hopf algebra is
$$C^*(\Gamma)\rtimes C(H)=C^*(\Gamma)\otimes C(H)$$
with comultiplication $\Delta(r\otimes\delta_k)=\sum_{h\in H}(r\otimes\delta_h)\otimes(h^{-1}\cdot r\otimes\delta_{h^{-1}k})$, for $r\in\Gamma,k\in H$.
\end{definition}

Observe that $C(H)$ is a subcoalgebra, and that $C^*(\Gamma)$ is not a subcoalgebra. The quantum group corresponding to $C^*(\Gamma)\rtimes C(H)$ is denoted $\widehat{\Gamma}\rtimes H$.

Now back to the factorization in Proposition 3.6, the point is that we have:

\begin{lemma}
With $L=F_Y,N=|Y|$ we have an isomorphism
$$C(Y\wr_*X)\simeq C^*(Y)^{*X}\rtimes C(X)$$
given by $v_{ij}\to1\otimes v_{ij}$ and $u_{ab}^{(i)}=\frac{1}{N}\sum_cL_{b-a,c}c^{(i)}\otimes 1$.
\end{lemma}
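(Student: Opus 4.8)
The plan is to exhibit the stated correspondence as a bijective Hopf algebra morphism, by first defining it on generators, then constructing an explicit inverse, and finally checking comultiplicativity. Write $P_x^{(i)}=\frac1N\sum_cL_{x,c}c^{(i)}\in C^*(Y)^{*X}$ for the element attached to $x\in Y$ in the $i$-th free factor, so that the proposed map reads $U_{ab}^{(i)}\mapsto P_{b-a}^{(i)}\otimes1$ and $V_{ij}\mapsto1\otimes v_{ij}$, where $v_{ij}=\delta_{i-j}$ denotes the standard magic coordinate of $C(X)\subset C(S_X^+)$. The first task is to check that these images satisfy the defining relations of $C(Y\wr_*X)$. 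Using the Fourier rules of Proposition 3.1(3) together with the orthogonality relation $\sum_cL_{x,c}\bar L_{y,c}=N\delta_{xy}$, I would verify directly that $P_x^{(i)}{}^*=P_x^{(i)}$, that $P_x^{(i)}P_y^{(i)}=\delta_{xy}P_x^{(i)}$, and that $\sum_xP_x^{(i)}=1$; hence the matrix $(P_{b-a}^{(i)}\otimes1)_{ab}$ is magic and depends only on $b-a$, as Lemma 3.5 requires for $C(Y)$, and likewise the $v_{ij}$ form the magic matrix of $C(X)$. The commutation relations $[U_{ab}^{(i)},V_{ij}]=0$ are then automatic, since the two families land in different legs of the tensor product $C^*(Y)^{*X}\otimes C(X)$. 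This produces an algebra morphism $\Phi$.

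For bijectivity I would construct the inverse $\Psi$ by Fourier inversion. Set $g_c^{(i)}=\sum_x\bar L_{x,c}P_x^{(i)}$, computed now inside $C(Y\wr_*X)$ from the magic entries $U^{(i)}$; the same orthogonality identity shows $g_c^{(i)}g_d^{(i)}=g_{c+d}^{(i)}$ and $g_c^{(i)}{}^*=g_{-c}^{(i)}$, so for each fixed $i$ the $g_c^{(i)}$ span a copy of $C^*(Y)$, and since the factors in $C(Y)^{*X}$ are free these copies are free, yielding a morphism out of $C^*(Y)^{*X}$. The point that makes the crossed coproduct appear, whose underlying algebra is the \emph{plain} tensor product, is that $g_c^{(i)}$ must commute with \emph{every} $V_{kl}$, not merely with $V_{ij}$. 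This is exactly where I expect the main obstacle to sit, and it is resolved by the abelianness of $X$: by Lemma 3.5 the entry $V_{kl}$ depends only on $k-l$, so $V_{kl}=V_{i,\,i-(k-l)}$ has first index $i$, and the single relation $[U_{ab}^{(i)},V_{ij}]=0$ therefore upgrades to $[U_{ab}^{(i)},V_{kl}]=0$ for all $k,l$, whence $[g_c^{(i)},V_{kl}]=0$ throughout. Setting $\Psi(1\otimes\delta_k)=V_{k,0}$ then gives a well-defined algebra morphism, and $\Phi,\Psi$ are seen to be mutually inverse on generators by Fourier inversion.

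It remains to verify that $\Phi$ intertwines the comultiplications, for which it suffices to test it on the algebra generators $w_{ia,jb}=U_{ab}^{(i)}V_{ij}$, whose images are $P_{b-a}^{(i)}\otimes\delta_{i-j}$. On one side $(\Phi\otimes\Phi)\Delta(w_{ia,jb})=\sum_{k,e}(P_{e-a}^{(i)}\otimes\delta_{i-k})\otimes(P_{b-e}^{(k)}\otimes\delta_{k-j})$; on the other side I would expand $\Delta(P_{b-a}^{(i)}\otimes\delta_{i-j})$ by linearity from the crossed coproduct formula of Definition 3.7, collapse the sum over $e$ using $\sum_eL_{e-a,c_1}L_{b-e,c_2}=N\delta_{c_1c_2}L_{b-a,c_1}$, and match the two expressions. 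The comparison forces, and is consistent with, the action of $X$ on $Y^{*X}$ being the index shift $h\cdot c^{(i)}=c^{(i+h)}$; under this action the two displays coincide term by term after the substitution $k=i-h$. This establishes that $\Phi$ is the desired Hopf algebra isomorphism. The genuinely delicate points are the commutation upgrade of the previous paragraph and the bookkeeping in this comultiplication computation; the remaining verifications are routine Fourier-analytic identities.
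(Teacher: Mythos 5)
Your proof is correct and follows essentially the same route as the paper's: the commutation upgrade $[u_{ab}^{(i)},v_{kl}]=[u_{ab}^{(i)},v_{i,l-k+i}]=0$ reduces the free wreath product to the plain tensor product $C(Y)^{*X}\otimes C(X)$, after which the factorwise Fourier transform $\Phi(u_{ab})=\frac{1}{N}\sum_cL_{b-a,c}c$ yields the stated isomorphism. You merely make explicit the inverse map and the compatibility with the crossed coproduct comultiplication, which the paper leaves implicit.
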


\begin{proof}
We know that $C(Y\wr_*X)$ is the quotient of $C(Y)^{*X}*C(X)$ by the relations $[u_{ab}^{(i)},v_{ij}]=0$. Now since $v_{ij}$ depends only on $j-i$, we obtain $[u_{ab}^{(i)},v_{kl}]=[u_{ab}^{(i)},v_{i,l-k+i}]=0$, and so we are in a usual tensor product situation, and we have:
$$C(Y\wr_*X)=C(Y)^{*X}\otimes C(X)$$

Let us compose now this identification with $\Phi^{*X}\otimes id$, where $\Phi:C(Y)\to C^*(Y)$ is the Fourier transform. We obtain an isomorphism as in the statement, and since $\Phi(u_{ab})=\frac{1}{N}\sum_cL_{b-a,c}c$, the formula for the image of $u_{ab}^{(i)}$ is indeed the one in the statement.
\end{proof}

Here is now our key lemma, which will lead to further factorizations:

\begin{lemma}
With $c^{(i)}=\sum_aL_{ac}u_{a0}^{(i)}$ and $\varepsilon_{ke}=\sum_iK_{ik}e_{ie}$ we have:
$$\pi(c^{(i)})(\varepsilon_{ke})=\frac{Q_{i,e-c}Q_{i-k,e}}{Q_{ie}Q_{i-k,e-c}}\varepsilon_{k,e-c}$$
In particular if $c_1+\ldots+c_s=0$ then $\pi(c_1^{(i_1)}\ldots c_s^{(i_s)})$ is diagonal, for any $i_1,\ldots,i_s$.
\end{lemma}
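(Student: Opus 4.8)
The plan is to turn the algebraic statement into a concrete matrix computation on the space $\mathbb C^{X\times Y}$ carrying $\pi$, using the factorization from Proposition 3.6. First I would substitute the defining relation $c^{(i)}=\sum_aL_{ac}u_{a0}^{(i)}$ (here $c\in Y$ is the label of the generator) into $\pi$, and use $\pi(u_{ab}^{(i)})=U_{ab}^{(i)}=\sum_jW_{ia,jb}$ together with the explicit entries
$$(W_{ia,j0})_{k'c',ld}=\frac{1}{MN}\cdot\frac{Q_{ic'}Q_{jd}}{Q_{id}Q_{jc'}}\,K_{i-j,k'-l}\,L_{a,c'-d}$$
recorded in the proof of Proposition 3.6. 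This presents $\pi(c^{(i)})$ as an explicit double sum over $a$ and $j$, and I would first compute its action on a standard basis vector $e_{ld}$.

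The computation then reduces to two Fourier collapses, both governed by Proposition 3.1(3). The summation over $a$ pairs $L_{ac}$ with $L_{a,c'-d}$ into $L_{a,c+c'-d}$, and the orthogonality relation $\sum_aL_{a,c+c'-d}=N\delta_{c',d-c}$ forces the second output coordinate to be $c'=d-c$; thus $\pi(c^{(i)})$ shifts the $Y$-index by $-c$. To resolve the $X$-part I would then feed in the Fourier-rotated vector $\varepsilon_{ke}=\sum_mK_{mk}e_{me}$ rather than a standard basis vector: the summation over $m$ now pairs $K_{mk}$ with $K_{i-j,k'-m}$, and collapses via orthogonality to $MK_{i-j,k'}\delta_{j,i-k}$, forcing $j=i-k$ and hence $i-j=k$. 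After these two collapses all sums disappear, the surviving factor reassembles (using the symmetry of $K=F_X$) into $\sum_{k'}K_{k'k}e_{k',e-c}=\varepsilon_{k,e-c}$, and the leftover $Q$-quotient is exactly $Q_{i,e-c}Q_{i-k,e}/(Q_{ie}Q_{i-k,e-c})$, which is the displayed formula.

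For the last assertion, the formula shows that in the basis $\{\varepsilon_{ke}\}$ each operator $\pi(c^{(i)})$ fixes the first index $k$, scales by a unit scalar, and shifts the second index by $-c$. Composing $s$ such operators therefore sends $\varepsilon_{ke}$ to a scalar multiple of $\varepsilon_{k,e-(c_1+\cdots+c_s)}$, so when $c_1+\cdots+c_s=0$ every $\varepsilon_{ke}$ is an eigenvector; since these vectors form a basis of $\mathbb C^{X\times Y}$ (the matrix $K=F_X$ being invertible), the operator $\pi(c_1^{(i_1)}\cdots c_s^{(i_s)})$ is diagonal in this basis.

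The bulk of the work, and the only genuine obstacle, is the index bookkeeping in the double Fourier sum; the conceptual point that makes it tractable is the choice of the basis $\varepsilon_{ke}$, Fourier-transformed in the $X$-variable, in which the action of each generator becomes a unit scalar times a pure shift in the $Y$-variable. Once this basis is used, both orthogonality relations fire cleanly and the eigen-type formula is forced.
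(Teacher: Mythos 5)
Your computation is correct and follows essentially the same route as the paper: expand $\pi(c^{(i)})=\sum_{aj}L_{ac}W_{ia,j0}$, let the two Fourier orthogonality relations collapse the sums over $a$ (forcing the $Y$-shift by $-c$) and over the $X$-index (forcing $j=i-k$), leaving exactly the stated $Q$-quotient, with the diagonality assertion then following from the shift-by-$c_r$ picture in the basis $\{\varepsilon_{ke}\}$. The only organizational difference is that the paper first records the action of a single $W_{ia,jb}$ on $\varepsilon_{ke}$ and then sums over $a$, whereas you act on standard basis vectors first and Fourier-transform afterwards; the bookkeeping is equivalent.
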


\begin{proof}
We have the following formula:
$$\pi(c^{(i)})=\sum_aL_{ac}\pi(u_{a0}^{(i)})=\sum_{aj}L_{ac}W_{ia,j0}$$

On the other hand, in terms of the basis in the statement, we have:
$$W_{ia,jb}(\varepsilon_{ke})=\frac{1}{N}\delta_{i-j,k}\sum_d\frac{Q_{id}Q_{je}}{Q_{ie}Q_{jd}}L_{a-b,d-e}\varepsilon_{kd}$$

We therefore obtain, as desired:
\begin{eqnarray*}
\pi(c^{(i)})(\varepsilon_{ke})
&=&\frac{1}{N}\sum_{ad}L_{ac}\frac{Q_{id}Q_{i-k,e}}{Q_{ie}Q_{i-k,d}}L_{a,d-e}\varepsilon_{kd}=\frac{1}{N}\sum_d\frac{Q_{id}Q_{i-k,e}}{Q_{ie}Q_{i-k,d}}\varepsilon_{kd}\sum_aL_{a,d-e+c}\\
&=&\sum_d\frac{Q_{id}Q_{i-k,e}}{Q_{ie}Q_{i-k,d}}\varepsilon_{kd}\delta_{d,e-c}=\frac{Q_{i,e-c}Q_{i-k,e}}{Q_{ie}Q_{i-k,e-c}}\varepsilon_{k,e-c}
\end{eqnarray*}

Regarding now the last assertion, this follows from the fact that each matrix of type $\pi(c_r^{(i_r)})$ acts on the standard basis elements $\varepsilon_{ke}$ by preserving the left index $k$, and by rotating by $c_r$ the right index $e$. Thus when we assume $c_1+\ldots+c_s=0$ all these rotations compose up to the identity, and we obtain indeed a diagonal matrix.
\end{proof}

We have now all needed ingredients for refining Proposition 3.6:

\begin{theorem}
We have a factorization as follows,
$$\xymatrix{C(S_{X\times Y}^+)\ar[rr]^{\pi_Q}\ar[rd]&&M_{X\times Y}(\mathbb C)\\&C^*(\Gamma_{X,Y})\rtimes C(X)\ar[ur]_\rho&}$$
where $\Gamma_{X,Y}=Y^{*X}/<[c_1^{(i_1)}\ldots c_s^{(i_s)},d_1^{(j_1)}\ldots d_s^{(j_s)}]=1|\sum_rc_r=\sum_rd_r=0>$.
\end{theorem}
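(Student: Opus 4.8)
The plan is to produce the stated factorization by showing that the representation $\pi$ from Lemma 3.8, which lands in $C^*(Y)^{*X}\rtimes C(X)$, actually factors through the proposed quotient $C^*(\Gamma_{X,Y})\rtimes C(X)$. Since the $C(X)$ tensor-factor is untouched, the whole problem reduces to the $C^*(Y)^{*X}$ part: I must verify that $\rho$ is well-defined on the quotient, and then that it is the \emph{Hopf image}, i.e.\ that no further relations hold. Concretely, writing $\Gamma_{X,Y}=Y^{*X}/\langle[c_1^{(i_1)}\cdots c_s^{(i_s)},d_1^{(j_1)}\cdots d_s^{(j_s)}]=1 \mid \sum_r c_r=\sum_r d_r=0\rangle$, I need (a) that every such commutator relation is killed by $\pi$, and (b) that these are exactly the relations that $\pi$ kills, so that $C^*(\Gamma_{X,Y})\rtimes C(X)$ is genuinely the Hopf image and not a proper intermediate quotient.

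\textbf{Killing the relations.}
Step (a) is where Lemma 3.9 does the decisive work. By that lemma, whenever $c_1+\cdots+c_s=0$ the operator $\pi(c_1^{(i_1)}\cdots c_s^{(i_s)})$ is \emph{diagonal} in the basis $\{\varepsilon_{ke}\}$. Two diagonal matrices commute, so for any two balanced words $a=c_1^{(i_1)}\cdots c_s^{(i_s)}$ and $b=d_1^{(j_1)}\cdots d_s^{(j_s)}$ with $\sum_r c_r=\sum_r d_r=0$ we automatically get $\pi(a)\pi(b)=\pi(b)\pi(a)$, hence $\pi([a,b])=1$. This shows $\pi$ descends to $C^*(\Gamma_{X,Y})\rtimes C(X)$, giving the map $\rho$ and the claimed factorization. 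I would present this as the short, clean half of the argument, emphasizing that the explicit diagonal action $\pi(c^{(i)})(\varepsilon_{ke})=\tfrac{Q_{i,e-c}Q_{i-k,e}}{Q_{ie}Q_{i-k,e-c}}\varepsilon_{k,e-c}$ (when the rotations cancel) is precisely the structural input making all such commutators trivial.

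\textbf{The hard direction: minimality.}
Step (b)---that the factorization is through the \emph{Hopf image}, i.e.\ that $C^*(\Gamma_{X,Y})\rtimes C(X)$ is minimal---is the main obstacle. Showing $\pi$ factors is easy; showing it factors no further requires proving $\rho$ itself is inner faithful, so that by the universal property of the Hopf image no additional relations can be imposed. For this I would combine the Hopf-image machinery with the observation that $\Gamma_{X,Y}$ is constructed to retain exactly the non-diagonal information: the individual rotations $\pi(c^{(i)})$ genuinely permute the right index $e$ by $c$, so words that are \emph{not} balanced act non-diagonally and their relations are detected faithfully. A careful argument here needs to trace through the crossed-coproduct structure from Definition 3.7 and verify that the Hopf image of $\pi$, computed via the general theory cited after Definition 1.2, matches the crossed coproduct of $C^*(\Gamma_{X,Y})$ with $C(X)$; the compatibility of the $C(X)$-coaction with the defining relations of $\Gamma_{X,Y}$ (which are preserved by the $X$-automorphisms, since the balancing condition $\sum_r c_r=0$ is $X$-invariant) is what guarantees the quotient is a Hopf algebra and the coproduct structure survives.

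\textbf{Assembling.}
Finally I would package the two halves: the factorization diagram follows by composing the map $C(S_{X\times Y}^+)\to C(Y\wr_* X)$ of Proposition 3.6, the isomorphism $C(Y\wr_*X)\simeq C^*(Y)^{*X}\rtimes C(X)$ of Lemma 3.8, and the quotient $C^*(Y)^{*X}\to C^*(\Gamma_{X,Y})$ justified by step (a), with $\rho$ the induced representation on the bottom-right algebra. I expect the authors' actual proof to be brief, invoking Lemma 3.9 for the factorization and deferring the sharper minimality/inner-faithfulness claim to the general Hopf-image formalism; the genuinely delicate point, which I would flag explicitly, is confirming that \emph{no} balanced-word relation beyond the listed commutators is forced, i.e.\ that the genericity of $Q$ is not yet needed at this stage but the relation set is already tight.
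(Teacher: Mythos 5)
Your step (a), together with the assembly paragraph, is essentially the paper's entire proof: the authors compose Proposition 3.6 with the isomorphism of Lemma 3.8 to land in $C^*(Y^{*X})\rtimes C(X)$, observe via Lemma 3.9 that balanced words act by diagonal matrices (so all the commutators $[c_1^{(i_1)}\cdots c_s^{(i_s)},d_1^{(j_1)}\cdots d_s^{(j_s)}]$ with $\sum_r c_r=\sum_r d_r=0$ are killed), and then invoke a general principle: a representation of $C^*(\Gamma)\rtimes C(X)$ that is trivial on an $X$-stable normal subgroup $\Lambda$ factors through $C^*(\Gamma/\Lambda)\rtimes C(X)$. The $X$-stability of $\Lambda$ (which you correctly note follows from the $X$-invariance of the balancing condition) is exactly what makes the crossed coproduct of the quotient well defined and the factorization one of Hopf algebras. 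So on the actual content of the statement your argument is correct and follows the same route.

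The genuine problem is your framing of step (b). Theorem 3.10 asserts only that a factorization \emph{exists}; it does not assert that $C^*(\Gamma_{X,Y})\rtimes C(X)$ is the Hopf image of $\pi_Q$. That minimality claim is precisely the content of Theorem 4.6, it is proved only under the genericity hypothesis on $Q$ (via Lemmas 4.4 and 4.5), and the paper explicitly warns at the start of Section 4 that further factorization is possible in general. Indeed, for degenerate $Q$ (for instance $Q\equiv 1$, where $W$ is an untwisted tensor product of Fourier models) the Hopf image is a proper quotient of $C^*(\Gamma_{X,Y})\rtimes C(X)$, so the ``relation set is already tight'' claim you flag as delicate is simply false at this level of generality. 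By declaring minimality to be ``the main obstacle'' and then leaving it as a vague sketch, you have labelled your own proof incomplete on a point that is not part of the statement and cannot be established here. Drop step (b) entirely: your step (a) plus the $X$-stability observation and the composition with Proposition 3.6 and Lemma 3.8 already constitutes a complete proof of the theorem as stated.
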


\begin{proof}
Assume that we have a representation $\pi:C^*(\Gamma)\rtimes C(X)\to M_L(\mathbb C)$, let $\Lambda$ be a $X$-stable normal subgroup of $\Gamma$, so that $X$ acts on $\Gamma/\Lambda$ and that we can form the crossed coproduct $C^*(\Gamma/\Lambda)\rtimes C(X)$, and assume that $\pi$ is trivial on $\Lambda$. Then $\pi$ factorizes as:
$$\xymatrix{C^*(\Gamma)\rtimes C(X)\ar[rr]^\pi\ar[rd]&&M_L(\mathbb C)\\&C^*(\Gamma/\Lambda)\rtimes C(X)\ar[ur]_\rho}$$

With $\Gamma=Y^{*X}$, and by using Lemma 3.8 and Lemma 3.9, this gives the result.
\end{proof}

\section{Formal deformations}

In general, further factorizing the representation found in Theorem 3.10 above is a quite complicated task. In this section we restrict attention to the case where the parameter matrix $Q$ is generic, in the sense that its entries are as algebrically independent as possible, and we prove that the representation in Theorem 3.10 is the minimal one.

Our starting point is the group $\Gamma_{X,Y}$ found above:

\begin{definition}
Associated to two finite abelian groups $X,Y$ is the discrete group
$$\Gamma_{X,Y}=Y^{*X}\Big/\left<[c_1^{(i_1)}\ldots c_s^{(i_s)},d_1^{(j_1)}\ldots d_s^{(j_s)}]=1\Big|\sum_rc_r=\sum_rd_r=0\right>$$
where the superscripts refer to the $X$ copies of $Y$, inside the free product.
\end{definition}

We will need a more convenient description of this group. The idea here is that the above commutation relations can be realized inside a suitable semidirect product.

Given a group acting on another group, $H\curvearrowright G$, we denote as usual by $G\rtimes H$ the semidirect product of $G$ by $H$, i.e. the set $G\times H$, with multiplication $(a,s)(b,t)=(as(b),st)$. Now given a group $G$, and a finite abelian group $Y$, we can make $Y$ act on $G^Y$, and form the product $G^Y\rtimes Y$. Since the elements of type $(g,\ldots,g)$ are invariant, we can form as well the product $(G^Y/G)\rtimes Y$, and by identifying $G^Y/G\simeq G^{|Y|-1}$ via the map $(1,g_1,\ldots,g_{|Y|-1})\to(g_1,\ldots,g_{|Y|-1})$, we obtain a product $G^{|Y|-1}\rtimes Y$. 

With these notations, we have the following result:

\begin{proposition}
The group $\Gamma_{X,Y}$ has the following properties:
\begin{enumerate}
\item $\Gamma_{X,Y}\simeq\mathbb Z^{(|X|-1)(|Y|-1)}\rtimes Y$.
\item $\Gamma_{X,Y}\subset\mathbb Z^{(|X|-1)|Y|}\rtimes Y$ via $c^{(0)}\to(0,c)$ and $c^{(i)}\to(b_{i0}-b_{ic},c)$ for $i\neq 0$, where $b_{ic}$ are the standard generators of $\mathbb Z^{(|X|-1)|Y|}$.
\end{enumerate}
\end{proposition}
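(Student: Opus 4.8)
The plan is to prove both statements by systematically analyzing the relations defining $\Gamma_{X,Y}$ and identifying them with the relations of an explicit semidirect product. I would begin with statement (2), since it gives a concrete model into which the abstract presentation embeds, and then deduce (1) from it. Let me fix notation: write $Y = \{0, c, \ldots\}$ with group operation written additively, and recall that $\Gamma_{X,Y} = Y^{*X} / R$, where $R$ consists of the relations forcing $[c_1^{(i_1)}\cdots c_s^{(i_s)}, d_1^{(j_1)}\cdots d_s^{(j_s)}] = 1$ whenever $\sum_r c_r = \sum_r d_r = 0$. The generators are $c^{(i)}$ for $c \in Y$ and $i \in X$, with each copy of $Y$ living at an index $i$.

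For statement (2), I would first define a candidate homomorphism $\Phi: Y^{*X} \to \mathbb{Z}^{(|X|-1)|Y|} \rtimes Y$ by the assignment in the statement, namely $c^{(0)} \mapsto (0, c)$ and $c^{(i)} \mapsto (b_{i0} - b_{ic}, c)$ for $i \neq 0$, where the action of $Y$ on $\mathbb{Z}^{(|X|-1)|Y|}$ permutes the second coordinate index (the $Y$-component of the generators $b_{ic}$) by translation. The first task is to check $\Phi$ is well-defined on each free factor, i.e. that it respects the group law of the $i$-th copy of $Y$; this amounts to the cocycle-type computation $\Phi(c^{(i)})\Phi(d^{(i)}) = \Phi((c+d)^{(i)})$, using the semidirect product multiplication and the telescoping identity $b_{i0} - b_{ic} + c\cdot(b_{i0} - b_{id})$ collapsing to $b_{i0} - b_{i,c+d}$ after applying the $Y$-action. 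Then I would verify that $\Phi$ kills the relations $R$: the key point, already foreshadowed by Lemma 3.9, is that any word $c_1^{(i_1)}\cdots c_s^{(i_s)}$ with $\sum_r c_r = 0$ maps under $\Phi$ to an element of the form $(v, 0)$ with trivial $Y$-component, hence lands in the abelian normal subgroup $\mathbb{Z}^{(|X|-1)|Y|}$, so any two such elements commute and the relations hold. This shows $\Phi$ descends to a homomorphism $\bar\Phi: \Gamma_{X,Y} \to \mathbb{Z}^{(|X|-1)|Y|}\rtimes Y$, and injectivity is the content of the embedding claim.

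For statement (1), the strategy is to identify the image of $\bar\Phi$ as $\mathbb{Z}^{(|X|-1)(|Y|-1)} \rtimes Y$. The kernel $N$ of $\Gamma_{X,Y}$ generated by all words with $\sum_r c_r = 0$ is, by the relations, abelian and normal, with $\Gamma_{X,Y}/N \simeq Y$ via the total-sum map $c^{(i)} \mapsto c$; this exhibits $\Gamma_{X,Y}$ as an extension of $Y$ by an abelian group $N$, and I would show the extension splits (the diagonal copy $c \mapsto c^{(0)}$ gives a section) so that $\Gamma_{X,Y} \simeq N \rtimes Y$. It then remains to compute the rank of $N$: I would show $N$ is freely generated as an abelian group by the elements $(c^{(i)})(c^{(0)})^{-1}$ type differences — more precisely by a basis indexed by $i \neq 0$ and $c \neq 0$, giving $(|X|-1)(|Y|-1)$ generators. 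Counting these freely-commuting generators against the image of $\bar\Phi$ inside $\mathbb{Z}^{(|X|-1)|Y|}$ (where the diagonal-invariance quotient already removed one $Y$-coordinate, matching the passage from $|Y|$ to $|Y|-1$) confirms both the rank and the injectivity.

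The main obstacle I anticipate is establishing that $N$ is \emph{free} abelian of exactly the claimed rank, rather than merely abelian generated by those elements. A priori the defining relations only assert commutativity, not the absence of additive relations among the generators, so one must rule out hidden torsion or linear dependencies. The cleanest way around this is to use the embedding $\bar\Phi$ from statement (2) as the certificate: if I can show the image of the proposed generating set of $N$ consists of $\mathbb{Z}$-linearly independent vectors in $\mathbb{Z}^{(|X|-1)|Y|}$, then $N$ is free of the right rank and $\bar\Phi$ is simultaneously injective on $N$. This is why I would prove (2) first and bootstrap (1) from it, reducing the freeness question to a finite linear-independence check over $\mathbb{Z}$ among the explicit vectors $b_{i0} - b_{ic}$, which span a sublattice of the expected dimension $(|X|-1)(|Y|-1)$.
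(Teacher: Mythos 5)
Your proposal is correct and follows the same overall skeleton as the paper's proof: the explicit morphism into $\mathbb Z^{(|X|-1)|Y|}\rtimes Y$, the telescoping computation showing that zero-sum words land in the abelian normal subgroup (so the defining relations are killed), the splitting $\Gamma_{X,Y}\simeq T\rtimes Y$ via the sum map $c^{(i)}\to c$ and the section $c\to c^{(0)}$, and the identification of $T$ as the abelian group generated by the $(|X|-1)(|Y|-1)$ elements $c^{(i)}(c^{(0)})^{-1}$. The one genuine divergence is the last step, freeness of $T$: the paper disposes of it by a forward reference to the computation in Lemma 4.4, i.e.\ by exhibiting (for a generic parameter $Q$) representations under which the generators of $T$ become multiplicatively independent via root-independence of the $Q_{ic}$. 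You instead extract freeness from the embedding of part (2) itself, observing that the images $b_{i0}-b_{ic}$ ($i,c\neq 0$) are $\mathbb Z$-linearly independent, so that $T$, being an abelian group on $(|X|-1)(|Y|-1)$ generators surjecting onto $\mathbb Z^{(|X|-1)(|Y|-1)}$, must be free of that rank and the map injective. This is more elementary and self-contained (no genericity, no analysis of the $\theta$-cocycles), and it removes the slightly awkward forward dependence of Proposition 4.2 on Lemma 4.4; what the paper's route buys is that the same computation is needed anyway for projective faithfulness, so nothing is duplicated. One point you should make explicit, which both you and the paper pass over quickly: the kernel $T$ is a priori generated by the $Y$-conjugates of the elements $t_{ic}=c^{(i)}(-c)^{(0)}$, and reducing to the $t_{ic}$ themselves requires the identity $d^{(0)}\,t_{ic}\,(d^{(0)})^{-1}=t_{id}^{-1}\,t_{i,c+d}$, which fortunately holds already in the free product $Y^{*X}$ by direct cancellation, so no relation of $\Gamma_{X,Y}$ is needed.
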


\begin{proof}
We prove these assertions at the same time. We must prove that we have group morphisms, given by the formulae in the statement, as follows:
$$\Gamma_{X,Y}\simeq\mathbb Z^{(|X|-1)(|Y|-1)}\rtimes Y\subset \mathbb Z^{(|X|-1)|Y|}\rtimes Y$$

Our first claim is that the formula in (2) defines a morphism $\Gamma_{X,Y}\to\mathbb Z^{(|X|-1)|Y|}\rtimes Y$. Indeed, the elements $(0,c)$ produce a copy of $Y$, and since we have a group embedding $Y\subset\mathbb Z^{|Y|}\rtimes Y$ given by $c\to(b_0-b_c,c)$, the elements $C^{(i)}=(b_{i0}-b_{ic},c)$ produce a copy of $Y$, for any $i\neq 0$. In order to check now the commutation relations, observe that:
$$C_1^{(i_1)}\ldots C_s^{(i_s)}=\left(b_{i_10}-b_{i_1c_1}+b_{i_2c_1}-b_{i_2,c_1+c_2}+\ldots+b_{i_s,c_1+\ldots+c_{s-1}}-b_{i_s,c_1+\ldots+c_s},\sum_rc_r\right)$$

Thus $\sum_rc_r=0$ implies $C_1^{(i_1)}\ldots C_s^{(i_s)}\in\mathbb Z^{(|X|-1)|Y|}$, and since we are now inside an abelian group, we have the commutation relations, and our claim is proved.

Using the considerations before the statement of the proposition, it is routine to construct an embedding $\mathbb Z^{(|X|-1)(|Y|-1)}\rtimes Y\subset \mathbb Z^{(|X|-1)|Y|}\rtimes Y$ such that we have group morphisms whose composition is the group morphism just constructed, as follows:
$$\Gamma_{X,Y}\to\mathbb Z^{(|X|-1)(|Y|-1)}\rtimes Y\subset \mathbb Z^{(|X|-1)|Y|}\rtimes Y$$

It remains to prove that the map on the left is injective. For this purpose, consider the morphism $\Gamma_{X,Y}\to Y$ given by $c^{(i)}\to c$, whose kernel $T$ is formed by the elements of type $c_1^{(i_1)} \ldots c_s^{(i_s)}$, with $\sum_rc_r=0$. We get an exact sequence, as follows:
$$1\to T\to\Gamma_{X,Y}\to Y\to1$$

This sequence splits by $c\to c^{(0)}$, so we have $\Gamma_{X,Y}\simeq T \rtimes Y$. Now by the definition of $\Gamma_{X,Y}$, the subgroup $T$ constructed above is abelian, and is moreover generated by the elements
$(-c)^{(0)}c^{(i)}$, $i,c \not=0$.  Finally, the fact that $T$ is freely generated by these elements follows from the computation in the proof of Lemma 4.4 below.
\end{proof}

Let us specify now what our genericity assumptions are:

\begin{definition}
We use the following notions:
\begin{enumerate}
\item We call $p_1,\ldots,p_m\in\mathbb T$ root independent if for any $r_1,\ldots, r_m\in\mathbb Z$ we have $p_1^{r_1}\ldots p_m^{r_m}=1\implies r_1=\ldots=r_m=0$.

\item A matrix $Q\in M_{X\times Y}(\mathbb T)$, taken to be dephased ($Q_{0c}=Q_{i0}=1$), is called generic if  
the elements $Q_{ic}$, with $i,c\neq0$, are root independent.
\end{enumerate}
\end{definition}

We will need the following lemma:

\begin{lemma}
Assume that $Q\in M_{X\times Y}(\mathbb T)$ is generic, and put
$$\theta_{ic}^{ke}=\frac{Q_{i,e-c}Q_{i-k,e}}{Q_{ie}Q_{i-k,e-c}}$$
For every $k \in X$, we have a representation  $\pi^k : \Gamma_{X,Y}\rightarrow U_{|Y|}$ given by $\pi^k(c^{(i)})\epsilon_e=\theta_{ic}^{ke}\epsilon_{e-c}$. The family of representations $(\pi^k)_{k \in X}$ is projectively faithful in the sense that if for some $t \in \Gamma_{X,Y}$, we have that $\pi^k(t)$ is a scalar matrix for any $k$, then $t=1$.
\end{lemma}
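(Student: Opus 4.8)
The plan is to verify two separate things: first that each $\pi^k$ is a genuine group representation (i.e.\ that the formula $\pi^k(c^{(i)})\epsilon_e=\theta_{ic}^{ke}\epsilon_{e-c}$ respects the defining relations of $\Gamma_{X,Y}$), and second the projective faithfulness. For the first point I would observe that $\pi^k$ is essentially the restriction of the representation $\rho$ from Theorem 3.10 to a single ``block'' indexed by $k$: comparing with Lemma 3.9, the scalar $\theta_{ic}^{ke}$ is exactly the eigenvalue $\frac{Q_{i,e-c}Q_{i-k,e}}{Q_{ie}Q_{i-k,e-c}}$ computed there, and the action of $c^{(i)}$ rotates the right index by $c$. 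Thus each $c^{(i)}$ acts as a monomial (generalized permutation) matrix on $\mathbb C^{|Y|}$, the product $c_1^{(i_1)}\cdots c_s^{(i_s)}$ with $\sum_r c_r=0$ acts diagonally (exactly as in the last assertion of Lemma 3.9), and the commutation relations defining $\Gamma_{X,Y}$ hold automatically because diagonal matrices commute. That each $c^{(i)}$ has order dividing $|Y|$ (so that the copies of $Y$ are respected) follows from the cocycle-type identity satisfied by the $\theta$'s, which I would check by a direct telescoping computation of $\prod_{m}\theta^{ke}_{i,c}$ along the orbit $e,e-c,e-2c,\dots$.

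For projective faithfulness, the strategy is to use the realization of $\Gamma_{X,Y}$ as $T\rtimes Y$ from Proposition 4.2, where $T$ is free abelian on the generators $t_{ic}=(-c)^{(0)}c^{(i)}$ for $i,c\neq 0$. Suppose $t\in\Gamma_{X,Y}$ is such that $\pi^k(t)$ is scalar for every $k$. Writing $t=(w,y)$ with $w\in T$ and $y\in Y$, I would first argue that the $Y$-component must be trivial: since $c^{(i)}$ shifts the right index by $c$, a nontrivial $y$ makes $\pi^k(t)$ a strictly off-diagonal monomial matrix, which cannot be scalar (a scalar matrix is diagonal, forcing $y=0$). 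So $t\in T$, and $\pi^k(t)$ is diagonal with explicit entries obtained by telescoping the $\theta$ factors; the claim becomes that if all these diagonal matrices are scalar for all $k$, then $w=0$.

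The main obstacle — and the real content of the lemma — is this last step, where genericity enters. After telescoping, the diagonal entry of $\pi^k(t)$ at basis vector $\epsilon_e$ is a monomial $\prod_{i,c} Q_{\bullet}^{\pm}$ whose exponents are integer-linear combinations of the multiplicities $r_{ic}$ appearing in $w=\prod t_{ic}^{r_{ic}}$. The scalar condition says these monomials are independent of $e$ (and we may also vary $k$), which yields a system of multiplicative equations in the $Q_{ic}$. By the root-independence hypothesis in Definition 4.3, every such multiplicative relation forces the corresponding integer exponent to vanish. The crux is therefore purely combinatorial: I must show that the resulting linear system in the $r_{ic}$ has only the trivial solution, i.e.\ that the exponent vectors arising as $e$ (and $k$) range over their values span enough of the dual space to pin down all $r_{ic}=0$. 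I expect this to reduce to a clean computation with the difference expressions $\frac{Q_{i,e-c}Q_{i-k,e}}{Q_{ie}Q_{i-k,e-c}}$, choosing suitable values of $k$ and $e$ to isolate each $r_{ic}$; this is exactly the computation that Proposition 4.2 defers to ``the proof of Lemma 4.4,'' so it doubles as the verification that $T$ is freely generated. I would carry it out by induction on the support of $w$, peeling off one generator at a time using a choice of $k,e$ that detects it while annihilating the others.
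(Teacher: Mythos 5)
Your architecture coincides with the paper's: each $\pi^k$ is the restriction of the representation of Lemma 3.9 to the block spanned by the $\varepsilon_{ke}$ with $k$ fixed, so that well-definedness on $\Gamma_{X,Y}$ follows from the factorization already established in Theorem 3.10 (the cocycle identity $\theta_{id}^{ke}\theta_{ic}^{k,e-d}=\theta_{i,c+d}^{ke}$, which you propose to check by telescoping, is indeed what makes each copy of $Y$ act correctly, and the commutation relations hold because words with $\sum_r c_r=0$ act diagonally). Likewise, the reduction to $t\in T$ via the decomposition $\Gamma_{X,Y}=T\rtimes Y$ of Proposition 4.2, and the appeal to the free generators $(-c)^{(0)}c^{(i)}$ of $T$ together with root independence, is exactly the paper's route. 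Up to that point the proposal is sound.

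The gap is precisely where you place it: the step you describe as ``I expect this to reduce to a clean computation\dots I would carry it out by induction on the support of $w$'' is the entire content of the lemma, and it is asserted rather than performed. It is not automatic: for $t=\prod_{i,c\neq0}\bigl((-c)^{(0)}c^{(i)}\bigr)^{R_{ic}}$ the diagonal eigenvalue at $\epsilon_e$ is $A(k,e)=\prod_{i,c\neq0}\bigl(\theta_{ic}^{ke}(\theta_{0c}^{ke})^{-1}\bigr)^{R_{ic}}$, and since each $\theta$ is a ratio of four entries of $Q$ (several of which degenerate under the dephasing $Q_{0c}=Q_{i0}=1$), the exponent of a fixed $Q_{ab}$ in $A(k,e)/A(k,f)$ mixes several $R_{ic}$'s in a $k,e,f$-dependent way, and one must actually verify that the resulting integer linear system is nonsingular. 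The paper resolves this with a normalization your proposal lacks: since $j\mapsto j-k$ permutes $X$, one has $\prod_{j\in X}\theta_{jc}^{ke}=1$, hence $(\theta_{0c}^{ke})^{-1}=\prod_{j\neq0}\theta_{jc}^{ke}$, which collapses the eigenvalue to the uniform form $A(k,e)=\prod_{j,c\neq0}(\theta_{jc}^{ke})^{P_{jc}}$ with $P_{jc}=R_{jc}+\sum_{i\neq0}R_{ic}$; independence of $e$ plus root independence of the $Q_{ic}$, $i,c\neq0$, then forces all $P_{jc}=0$, and summing over $j\neq0$ gives $|X|\sum_{i\neq0}R_{ic}=0$, whence $R_{jc}=0$. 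Without this identity, or an equivalent explicit computation in place of your induction sketch, the proof is incomplete; note also that the same argument is what justifies the claim, deferred from Proposition 4.2, that $T$ is freely generated by the $(-c)^{(0)}c^{(i)}$, so it cannot be assumed and must be exhibited.
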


\begin{proof}
The representations $\pi^k$ arise from Lemma 3.9. With $\Gamma_{X,Y}=T\rtimes Y$, as in the proof of Proposition 4.2, we see that for $t \in \Gamma_{X,Y}$ such that $\pi^k(t)$ is a scalar matrix for any $k$, then $t \in T$, since the elements of $T$ are the only ones having their image by $\pi^k$  formed by diagonal matrices. Now write $t=\prod_{i \not=0, c\not=0} ((-c)^{(0)}(c)^{(i)})^{R_{ic}}$ with the generators of $T$ as in the proof of Proposition 4.2, for $R_{ic}\in\mathbb Z$, and consider the quantities:
\begin{eqnarray*}
A(k,e)&=&\prod_{i\neq0}\prod_{c\neq0}(\theta_{ic}^{ke}(\theta_{0c}^{ke})^{^{-1}})^{R_{ic}}
=\prod_{i\neq0}\prod_{c\neq0} (\theta_{ic}^{ke})^{R_{ic}}(\theta_{0c}^{ke})^{-R_{ic}}\\
&=&\prod_{i\neq0}\prod_{c\neq0}(\theta_{ic}^{ke})^{R_{ic}}
\cdot\prod_{c\neq0}(\theta_{0c}^{ke})^{-\sum_{i\neq0}R_{ic}}
=\prod_{j\neq0}\prod_{c\neq0} (\theta_{jc}^{ke})^{R_{jc}}
\cdot\prod_{c\neq0}\prod_{j\neq0}(\theta_{jc}^{ke})^{\sum_{i\neq0}R_{ic}}\\
&=&\prod_{j\neq0}\prod_{c\neq0}(\theta_{jc}^{ke})^{R_{jc}+\sum_{i\neq0}R_{ic}}
\end{eqnarray*}

We have $\pi^k(t)(\epsilon_e)= A(k,e)\epsilon_e$ for any $k,e$. 
Our assumption is that for any $k$, we have $A(k,e)=A(k,f)$ for any $e,f$. Using the root independence of the elements $Q_{ic}$, $i,c \not=0$, we see that this implies $R_{ic}=0$ for any $i,c$, and this proves our assertion.  
\end{proof}

We will need as well the following lemma:

\begin{lemma}
Let $\pi:C^*(\Gamma)\rtimes  C(H) \rightarrow L$ be a surjective Hopf algebra map, such that $\pi_{|C(H)}$ is injective, and such that for $r \in \Gamma$ and $f \in C(H)$, we have:
$$\pi(r \otimes 1)=\pi(1 \otimes f) \implies r=1$$ 
Then $\pi$ is an isomorphism.
\end{lemma}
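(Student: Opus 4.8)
The plan is to reduce everything to the identity $h_L\circ\pi=h_A$ between Haar states, and then finish by faithfulness. Since $A=C^*(\Gamma)\rtimes C(H)$ is, as an algebra, the tensor product $C^*(\Gamma)\otimes C(H)$, every basis vector factors as $r\otimes\delta_k=(r\otimes1)(1\otimes\delta_k)$; so, putting $R_r=\pi(r\otimes1)$ and $P_k=\pi(1\otimes\delta_k)$, the assignment $r\mapsto R_r$ is a unitary group morphism, the $P_k$ are pairwise orthogonal projections summing to $1$, the two families commute, and $\pi(r\otimes\delta_k)=R_rP_k$. Both $A$ and its quotient $L$ are cosemisimple Hopf algebras of Kac type, hence carry faithful Haar states $h_A,h_L$. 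Here $h_A(r\otimes\delta_k)=\delta_{r,1}/|H|$, and since hypothesis (1) makes $\pi$ restrict to an isomorphism from $C(H)$ onto a sub-Hopf-algebra of $L$, the Haar state $h_L$ restricts to that of $C(H)$, so $h_L(P_k)=1/|H|$ for every $k$.

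If I can prove $h_L(R_sP_k)=\delta_{s,1}/|H|$ for all $s\in\Gamma$ and $k\in H$, then $h_L\circ\pi=h_A$, and the lemma follows: any $a\in\ker\pi$ satisfies $h_A(a^*a)=h_L(\pi(a)^*\pi(a))=0$, hence $a=0$ by faithfulness, so $\pi$ is injective and, being surjective, an isomorphism. To get this, write $b(s,k)=h_L(R_sP_k)$ and feed the left invariance of $h_L$ the element $x=R_sP_k$, using the comultiplication formulas $\Delta_L(R_s)=\sum_hR_sP_h\otimes R_{h^{-1}\cdot s}$ and $\Delta_L(P_k)=\sum_hP_h\otimes P_{h^{-1}k}$ coming from the crossed coproduct. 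Expanding $(h_L\otimes\mathrm{id})\Delta_L(R_sP_k)=b(s,k)1$ and comparing components in the mutually orthogonal corners $P_mLP_m$ (the families commute, so $R_\bullet P_m\in P_mLP_m$), the corner $m=k$ isolates the single relation $b(s,e)\,R_sP_k=b(s,k)\,P_k$, where $e$ is the neutral element of $H$.

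From this relation the values $b(s,k)$ are pinned down. Applying $h_L$ gives $b(s,k)\big(b(s,e)-1/|H|\big)=0$ for all $k$. If $b(s,e)=0$, this forces $b(s,k)=0$ for every $k$. If instead $b(s,e)\neq0$, the relation reads $R_sP_k=\big(b(s,k)/b(s,e)\big)P_k$, so summing over $k$ yields $R_s=\sum_k\big(b(s,k)/b(s,e)\big)P_k\in\pi(C(H))$, i.e. $\pi(s\otimes1)=\pi(1\otimes f)$ for a suitable $f$; hypothesis (2) then forces $s=1$. Combining the two cases with $b(1,k)=h_L(P_k)=1/|H|$ gives exactly $b(s,k)=\delta_{s,1}/|H|$.

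The heart of the matter, and the only place where hypothesis (2) is used, is the passage from coarse to fine information. Invariance controls the summed quantity $\sum_kb(s,k)=h_L(R_s)$ almost for free, but what the proof needs is the value of $h_L$ against each individual spectral projection $P_k$; the left-invariance relation above is precisely what upgrades this, by showing that the moment one of the $b(s,k)$ is nonzero the element $R_s$ must be diagonal in the $P_k$, hence land in $\pi(C(H))$ — at which point (2) eliminates every nontrivial $s$. The remaining things to verify are routine in the present framework: that $A$ and $L$ are Kac-type cosemisimple Hopf algebras with faithful Haar states restricting correctly to sub-Hopf-algebras, and the explicit value $h_A(r\otimes\delta_k)=\delta_{r,1}/|H|$.
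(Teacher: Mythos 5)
Your argument is correct, but it is genuinely different from the one in the paper. The paper proves this lemma by purely Hopf-algebraic means: it places $\pi$ into a morphism of cocentral exact sequences $\mathbb C\to C(H)\to A\to C^*(\Gamma)\to\mathbb C$ and $\mathbb C\to C(H)\to L\to C^*(\overline\Gamma)\to\mathbb C$ (using centrality of $\pi(C(H))$ in $L$ to form the quotient, and the fact that a quotient of a group algebra is a group algebra), and then reduces via the five lemma to the injectivity of the induced group morphism $\Gamma\to\overline\Gamma$, which is where hypothesis (2) enters. You instead prove the single identity $h_L\circ\pi=h_A$ and conclude by faithfulness of the Haar state of $A$; the computation in the corner $P_kLP_k$ of the left-invariance relation, forcing $R_s$ to be diagonal in the $P_k$ (hence in $\pi(C(H))$) as soon as some $b(s,k)\neq 0$, is the correct replacement for the five-lemma step, and hypothesis (2) is used at exactly the analogous point. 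The trade-off: the paper's route is purely algebraic and does not invoke positivity or Haar states at all, only the exact-sequence formalism of \cite{ade}, \cite{sch}; your route requires the CQG/Woronowicz framework (existence and uniqueness of Haar states, faithfulness at the level of the underlying Hopf $*$-algebras -- which is the right level here, since faithfulness of the trace on the full $C^*(\Gamma)$ would require amenability), but it is more self-contained and has the pleasant byproduct of directly establishing that the Haar state of the Hopf image is the tensor-product Haar state of $C^*(\Gamma)\rtimes C(H)$, a fact the paper needs anyway in the proof of Proposition 5.1. The two routine facts you defer -- that $h_L$ restricts to the Haar state on the Hopf $*$-subalgebra $\pi(C(H))\simeq C(H)$, and the value $h_A(r\otimes\delta_k)=\delta_{r,1}/|H|$ -- are indeed standard and unproblematic.
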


\begin{proof} 
We use here various tools from \cite{ade}, \cite{sch}. Put $A=C^*(\Gamma)\rtimes C(H)$. We start with the following Hopf algebra exact sequence, where $i(f)=1\otimes f$ and $p=\varepsilon\otimes 1$:
$$\mathbb C\to C(H)\overset{i}\to A \overset{p}\to C^*(\Gamma)\to
\mathbb C$$ 

Since $\pi\circ i$ is injective, and Hopf subalgebra $\pi\circ i(C(H))$ is central in $L$, we can form the quotient Hopf algebra $\overline{L} = L/ (\pi\circ i(C(H))^+L$, and we get another exact sequence:
$$\mathbb C\to C(H)\xrightarrow{\pi \circ i} L \overset{q}\to \overline{L}  \to
\mathbb C$$ 

Note that this sequence is indeed exact, e.g. by centrality. So we get the following diagram with exact rows, with the Hopf algebra map on the right surjective:
$$\xymatrix{\mathbb C\ar[r]&C(H)\ar@2@{-}[d]\ar[r]^i&A\ar[d]^\pi\ar[r]^p&C^*(\Gamma)\ar[r]\ar[d]&\mathbb C\\
\mathbb C\ar[r]&C(H)\ar[r]^{\pi\circ i}&L\ar[r]^q&\overline{L}\ar[r]&\mathbb C}$$

Since a quotient of a group algebra is still a group algebra, we get a commutative diagram with exact rows as follows:
$$\xymatrix{\mathbb C\ar[r]&C(H)\ar@2@{-}[d]\ar[r]^i&A\ar[d]^\pi\ar[r]^p&C^*(\Gamma)\ar[r]\ar[d]&\mathbb C\\
\mathbb C\ar[r]&C(H)\ar[r]^{\pi\circ i}&L\ar[r]^{q'}&C^*(\overline{\Gamma})\ar[r]&\mathbb C}$$

Here the Hopf algebra map on the right is induced by a surjective morphism $u : \Gamma \rightarrow \overline{\Gamma}$, $g \mapsto \overline{g}$. By the five lemma we just have to show that $u$ is injective. So, let $g \in \Gamma$ be such that $u(g)=1$. Then $q' \pi(g \otimes 1) = u p(g\otimes 1)=u(g)=\overline{g}=1$. For $g \in \Gamma$, put:
$$_gA= \{a \in A \ | \ p(a_1) \otimes a_2= g \otimes a\}$$
$$_{\overline{g}}L= \{l \in L \ | \ q'(l_1) \otimes l_2= \overline{g} \otimes l\}$$

The commutativity of the right square ensures that $\pi(_gA) \subset {_{\overline{g}}L}$.
Then with the previous $g$, we have $\pi(g \otimes 1) \in {_{\overline{1}}L} = \pi i (C(H))$ (exactness of the sequence), so $\pi(g \otimes 1)= \pi(1 \otimes f)$
for some $f \in C(H)$. We conclude by our assumption that $g=1$.
\end{proof}

We have now all ingredients for proving our first main result:

\begin{theorem}
When $Q$ is generic, the minimal factorization for $\pi_Q$ is 
$$\xymatrix{C(S_{X\times Y}^+)\ar[rr]^{\pi_Q}\ar[rd]&&M_{X\times Y}(\mathbb C)\\&C^*(\Gamma_{X,Y})\rtimes C(X)\ar[ur]_\pi&}$$
where $\Gamma_{X,Y}\simeq\mathbb Z^{(|X|-1)(|Y|-1)}\rtimes Y$ is the discrete group constructed above.
\end{theorem}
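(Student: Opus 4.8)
The plan is to show that the factorization obtained in Theorem 3.10 is minimal, which amounts to proving that the representation $\pi:A\to M_{X\times Y}(\mathbb C)$, where $A=C^*(\Gamma_{X,Y})\rtimes C(X)$, is inner faithful; the abstract identification $\Gamma_{X,Y}\simeq\mathbb Z^{(|X|-1)(|Y|-1)}\rtimes Y$ is already supplied by Proposition 4.2, so no further work is needed there. Let $L$ be the Hopf image of $\pi$, with canonical surjective Hopf algebra map $\varphi:A\to L$, so that $\pi$ factors as $A\overset{\varphi}{\to}L\to M_{X\times Y}(\mathbb C)$. Since $\pi_Q$ itself factors through $A$ with $C(S_{X\times Y}^+)\to A$ surjective, the Hopf image of $\pi_Q$ coincides with $L$; hence it suffices to prove that $\varphi$ is an isomorphism, and for this I would verify the two hypotheses of Lemma 4.5 (with $\Gamma=\Gamma_{X,Y}$ and $H=X$).

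First I would record the block structure of $\pi$ in the basis $\varepsilon_{ke}$ of Lemma 3.9. By that lemma, $\pi(c^{(i)})$ preserves each subspace $H_k=\mathrm{span}\{\varepsilon_{ke}\mid e\in Y\}$ and acts on it exactly as the representation $\pi^k$ of Lemma 4.4; consequently $\pi(r\otimes 1)$ is block diagonal with block $\pi^k(r)$ on $H_k$, for every $r\in\Gamma_{X,Y}$. On the other hand, a short computation with $V_{ij}=\sum_a W_{ia,jb}$ (using $\sum_a L_{a-b,d-e}=N\delta_{d,e}$) gives $\pi(v_{ij})\varepsilon_{ke}=\delta_{i-j,k}\,\varepsilon_{ke}$, so $v_{ij}$ acts as the orthogonal projection onto $H_{i-j}$. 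Since $v_{ij}$ depends only on $i-j$, the $|X|$ resulting block projections $P_k$ are linearly independent, whence $\pi_{|C(X)}$, and a fortiori $\varphi_{|C(X)}$, is injective; this is the first hypothesis of Lemma 4.5.

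Next I would check the second hypothesis: if $\varphi(r\otimes 1)=\varphi(1\otimes f)$ in $L$ for some $r\in\Gamma_{X,Y}$ and $f\in C(X)$, then $r=1$. Composing with the algebra homomorphism $L\to M_{X\times Y}(\mathbb C)$ yields $\pi(r\otimes 1)=\pi(1\otimes f)$ inside $M_{X\times Y}(\mathbb C)$. By the block description just obtained, $\pi(1\otimes f)=\sum_k\lambda_k P_k$ is block-scalar, so the equality forces $\pi^k(r)$ to be a scalar matrix for every $k\in X$. The projective faithfulness established in Lemma 4.4 then gives $r=1$, as required. With both hypotheses verified, Lemma 4.5 shows that $\varphi$ is an isomorphism, i.e.\ $L\simeq A$, which is precisely the asserted minimality.

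I expect the genuine difficulty to be already concentrated in the two lemmas in hand: Lemma 4.4, where the genericity of $Q$ enters through the root-independence of the $Q_{ic}$, and Lemma 4.5, whose exact-sequence/five-lemma argument does the structural work. The only care-point in assembling Theorem 4.6 is the reduction in the third paragraph, namely passing from an equality in the a priori unknown Hopf image $L$ to an equality in $M_{X\times Y}(\mathbb C)$ (legitimate because $L\to M_{X\times Y}(\mathbb C)$ is an algebra map, even though it need not be injective), together with the correct matching of the block decomposition of $\pi$ with the family $(\pi^k)_{k\in X}$ of Lemma 4.4.
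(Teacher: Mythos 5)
Your proposal is correct and follows essentially the same route as the paper: reduce to Lemma 4.5 applied to the map from $C^*(\Gamma_{X,Y})\rtimes C(X)$ to the Hopf image, verify injectivity on $C(X)$, and use the block decomposition over the basis $\varepsilon_{ke}$ together with the projective faithfulness of the family $(\pi^k)$ from Lemma 4.4 to check the second hypothesis. Your version merely spells out a couple of points the paper leaves implicit (the explicit computation showing $\pi(v_{ij})$ is the projection onto $H_{i-j}$, and the remark that one may compose with the not-necessarily-injective map $L\to M_{X\times Y}(\mathbb C)$), which is fine.
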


\begin{proof}
We want to apply Lemma 4.5 to the morphism $\theta : C^*(\Gamma_{X,Y})\rtimes C(X)\to L$ arising from the factorization in Theorem 3.10, where $L$ denotes the Hopf image of $\pi_Q$, which produces the following commutative diagram (see \cite{bb2}):
$$\xymatrix{
C(S_{X \times Y}^+) \ar[rr]^{\pi_Q} \ar[dr]_{} \ar@/_/[ddr]_{}& & M_{X \times Y}(\mathbb C) \\
& L \ar[ur]_{} & \\
& C^*(\Gamma_{X,Y})\rtimes C(X) \ar@{-->}[u]_\theta \ar@/_/[uur]_{\pi}& 
}$$

The first observation is that the injectivity assumption on $C(X)$ holds by construction, and that
for $f \in C(X)$, the matrix $\pi(f)$ is ``block scalar'', the blocks corresponding to the indices $k$ in the basis $\varepsilon_{ke}$ in the basis from Lemma 3.9.
Now for $r \in \Gamma_{X,Y}$ with $\theta(r\otimes 1)=\theta(1 \otimes f)$ for some $f \in C(X)$, we see, using the commutative diagram, that we will have that 
$\pi(r \otimes 1)$ is block scalar. By Lemma 4.4, the family of representations $(\pi^k)$ of $\Gamma_{X,Y}$, corresponding to the blocks $k$,  is projectively faithful, so $r=1$.
  We can apply indeed Lemma 4.5, and we are done.
\end{proof}

\section{Random walks}

In this section we compute the Kesten type measure $\mu=law(\chi)$ for the quantum group $G=G_Q$ found in Theorem 4.6. Our results here will be a combinatorial moment formula, a geometric interpretation of it, and an asymptotic convergence result.

The moment formula is as follows:

\begin{proposition}
We have the moment formula
$$\int_G\chi^p
=\frac{1}{|X|\cdot|Y|}\#\left\{\begin{matrix}i_1,\ldots,i_p\in X\\ d_1,\ldots,d_p\in Y\end{matrix}\Big|\begin{matrix}[(i_1,d_1),(i_2,d_2),\ldots,(i_p,d_p)]\ \ \ \ \\=[(i_1,d_p),(i_2,d_1),\ldots,(i_p,d_{p-1})]\end{matrix}\right\}$$
where the sets between square brackets are by definition sets with repetition.
\end{proposition}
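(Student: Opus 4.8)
The plan is to compute $\int_G\chi^p$ directly, exploiting the identification $C(G)=C^*(\Gamma_{X,Y})\rtimes C(X)$ from Theorem 4.6. Writing elements of this crossed coproduct as $r\otimes\delta_k$ with $r\in\Gamma_{X,Y}$, $k\in X$, the first step is to pin down the Haar functional. Since the underlying algebra is just $C^*(\Gamma_{X,Y})\otimes C(X)$, with $C(X)$ a central subcoalgebra, the Haar functional is the tensor product of the canonical trace on $C^*(\Gamma_{X,Y})$ (picking out the coefficient of the unit) and the integration on $C(X)$, namely $\int_G(r\otimes\delta_k)=\frac{1}{|X|}\delta_{r,1}$. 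I would confirm bi-invariance by a direct check against the comultiplication of Definition 3.7: applying $(\int_G\otimes\mathrm{id})$ to $\Delta(r\otimes\delta_k)=\sum_h(r\otimes\delta_h)\otimes(h^{-1}\cdot r\otimes\delta_{h^{-1}k})$ collapses the sum to $\frac{1}{|X|}\delta_{r,1}$ times the unit.

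The second step expresses the character $\chi=\mathrm{Tr}(w)=\sum_{ia}w_{ia,ia}$ in the generators. Using $w_{ia,jb}=u_{ab}^{(i)}v_{ij}$ together with Lemma 3.8, i.e. $v_{ij}=1\otimes\delta_{i-j}$ and $u_{ab}^{(i)}=\frac1N\sum_cL_{b-a,c}\,c^{(i)}\otimes1$, and the fact that $L_{0,c}=(F_Y)_{0,c}=1$, one gets $w_{ia,ia}=\big(\frac1N\sum_c c^{(i)}\big)\otimes\delta_0$. Summing over $a\in Y$ (a factor $N$) and then over $i\in X$ gives $\chi=\sum_{i\in X,\,c\in Y}c^{(i)}\otimes\delta_0$. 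Since multiplication in the crossed coproduct is componentwise and $\delta_0^p=\delta_0$, this yields $\chi^p=\sum c_1^{(i_1)}\cdots c_p^{(i_p)}\otimes\delta_0$, summed over all $i_r\in X$, $c_r\in Y$, and applying the Haar functional produces the counting formula
\[
\int_G\chi^p=\frac{1}{|X|}\#\big\{(i,c)\in X^p\times Y^p\ \big|\ c_1^{(i_1)}\cdots c_p^{(i_p)}=1\big\}.
\]

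The third and most delicate step is to translate the word condition $c_1^{(i_1)}\cdots c_p^{(i_p)}=1$ into the multiset statement. Here I would use the embedding of Proposition 4.2(2) into $\mathbb Z^{(|X|-1)|Y|}\rtimes Y$ and the word formula from its proof, with the convention $b_{0,\cdot}=0$: the product equals $\big(\sum_{r=1}^p(b_{i_r,s_{r-1}}-b_{i_r,s_r}),\ \sum_r c_r\big)$ where $s_r=c_1+\cdots+c_r$ and $s_0=0$. Thus the product is trivial exactly when $\sum_r c_r=0$ and, for every generator $b_{i,d}$ with $i\neq0$, the coefficient vanishes, that is $\#\{r:i_r=i,\,s_{r-1}=d\}=\#\{r:i_r=i,\,s_r=d\}$. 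Setting $d_r:=s_r$ (so $d_0=0$), this is precisely the equality of the multisets $[(i_r,d_r)]_r$ and $[(i_r,d_{r-1})]_r$ restricted to $i\neq0$, while $\sum_r c_r=0$ reads $d_p=0$. I would then note that the $i=0$ part of the multiset equality is automatic: for each fixed $d$, the totals $\sum_i\#\{r:i_r=i,s_{r-1}=d\}$ and $\sum_i\#\{r:i_r=i,s_r=d\}$ agree because $s_0=s_p$, so balancing all $i\neq0$ parts forces the $i=0$ parts to balance as well. Hence the word condition is equivalent to full multiset equality together with $d_p=0$.

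Finally I would account for the normalization. The change of variables $c_r\mapsto d_r=c_1+\cdots+c_r$ realizes each word-solution as a multiset-solution normalized by $d_p=0$, whereas the statement counts multiset-solutions with $d$ ranging freely over $Y^p$ (with $d_0:=d_p$). The global shift $d_r\mapsto d_r+t$, $t\in Y$, preserves the multiset condition (it shifts all second coordinates equally) and the induced $c_r=d_r-d_{r-1}$, acts freely, and has exactly one representative with $d_p=0$ in each orbit; thus the free count equals $|Y|$ times the normalized count, converting the prefactor $\frac1{|X|}$ into $\frac1{|X|\,|Y|}$ and giving the stated formula. I expect the main obstacle to be exactly this last combinatorial bookkeeping: the automatic balancing at $i=0$ and the correct handling of the global $Y$-shift that supplies the factor $|Y|$.
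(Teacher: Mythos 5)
Your proposal is correct and follows essentially the same route as the paper: the same identification of the Haar functional on the crossed coproduct, the same computation $\chi=\big(\sum_{ic}c^{(i)}\big)\otimes\delta_0$, the same use of the embedding from Proposition 4.2(2) with the change of variables $d_r=c_1+\cdots+c_r$, and the same translation-invariance argument producing the factor $\frac{1}{|Y|}$. Your explicit check that the $i=0$ part of the multiset equality is forced by the $i\neq0$ parts (since $s_0=s_p$) is a welcome clarification of a point the paper passes over silently, but it does not change the argument.
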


\begin{proof}
According to the various formulae in sections 2 and 3 above, the factorization found in Theorem 4.6 is, at the level of standard generators, as follows:
$$\begin{matrix}
C(S_{X\times Y}^+)&\to&C^*(\Gamma_{X,Y})\otimes C(X)&\to&M_{X\times Y}(\mathbb C)\\
u_{ia,jb}&\to&\frac{1}{|Y|}\sum_cF_{b-a,c}c^{(i)}\otimes v_{ij}&\to&W_{ia,jb}
\end{matrix}$$

Thus, the main character is given by:
$$\chi=\frac{1}{|Y|}\sum_{iac}c^{(i)}\otimes v_{ii}=\sum_{ic}c^{(i)}\otimes v_{ii}=\left(\sum_{ic}c^{(i)}\right)\otimes\delta_1$$

Now since the Haar functional of $C^*(\Gamma)\rtimes C(H)$ is the tensor product of the Haar functionals of $C^*(\Gamma),C(H)$, this gives the following formula, valid for any $p\geq1$:
$$\int_G\chi^p=\frac{1}{|X|}\int_{\widehat{\Gamma}_{X,Y}}\left(\sum_{ic}c^{(i)}\right)^p$$

Let $S_i=\sum_cc^{(i)}$. By using the embedding in Proposition 4.2 (2), with the notations there we have $S_i=\sum_c(b_{i0}-b_{ic},c)$, and these elements multiply as follows:
$$S_{i_1}\ldots S_{i_p}=\sum_{c_1\ldots c_p}
\begin{pmatrix}
b_{i_10}-b_{i_1c_1}+b_{i_2c_1}-b_{i_2,c_1+c_2}&&\\
+b_{i_3,c_1+c_2}-b_{i_3,c_1+c_2+c_3}+\ldots\ldots&,&c_1+\ldots+c_p&\\
\ldots\ldots+b_{i_p,c_1+\ldots+c_{p-1}}-b_{i_p,c_1+\ldots+c_p}&&
\end{pmatrix}$$

In terms of the new indices $d_r=c_1+\ldots+c_r$, this formula becomes:
$$S_{i_1}\ldots S_{i_p}=\sum_{d_1\ldots d_p}
\begin{pmatrix}
b_{i_10}-b_{i_1d_1}+b_{i_2d_1}-b_{i_2d_2}&&\\
+b_{i_3d_2}-b_{i_3d_3}+\ldots\ldots&,&d_p&\\
\ldots\ldots+b_{i_pd_{p-1}}-b_{i_pd_p}&&
\end{pmatrix}$$

Now by integrating, we must have $d_p=0$ on one hand, and on the other hand:
$$[(i_1,0),(i_2,d_1),\ldots,(i_p,d_{p-1})]=[(i_1,d_1),(i_2,d_2),\ldots,(i_p,d_p)]$$

Equivalently, we must have $d_p=0$ on one hand, and on the other hand:
$$[(i_1,d_p),(i_2,d_1),\ldots,(i_p,d_{p-1})]=[(i_1,d_1),(i_2,d_2),\ldots,(i_p,d_p)]$$

Thus, by translation invariance with respect to $d_p$, we obtain:
$$\int_{\widehat{\Gamma}_{X,Y}}S_{i_1}\ldots S_{i_p}
=\frac{1}{|Y|}\#\left\{d_1,\ldots,d_p\in Y\Big|\begin{matrix}[(i_1,d_1),(i_2,d_2),\ldots,(i_p,d_p)]\ \ \ \ \\=[(i_1,d_p),(i_2,d_1),\ldots,(i_p,d_{p-1})]\end{matrix}\right\}$$

It follows that we have the following moment formula:
$$\int_{\widehat{\Gamma}_{X,Y}}\left(\sum_iS_i\right)^p
=\frac{1}{|Y|}\#\left\{\begin{matrix}i_1,\ldots,i_p\in X\\ d_1,\ldots,d_p\in Y\end{matrix}\Big|\begin{matrix}[(i_1,d_1),(i_2,d_2),\ldots,(i_p,d_p)]\ \ \ \ \\=[(i_1,d_p),(i_2,d_1),\ldots,(i_p,d_{p-1})]\end{matrix}\right\}$$

Now by dividing by $|X|$, we obtain the formula in the statement.
\end{proof} 

The formula in Proposition 5.1 can be interpreted as follows:

\begin{proposition}
With $M=|X|,N=|Y|$ we have the formula
$$law(\chi)=\left(1-\frac{1}{N}\right)\delta_0+\frac{1}{N}law(A)$$
where $A\in C(\mathbb T^{MN},M_M(\mathbb C))$ is given by $A(q)=$ Gram matrix of the rows of $q$.
\end{proposition}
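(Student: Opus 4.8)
The plan is to reinterpret the combinatorial count in Proposition 5.1 as the $p$-th moment of an explicit random matrix, integrated over a torus of parameters. First I would set up the relevant object: for $q \in \mathbb{T}^{MN}$, thought of as a matrix $q = (q_{ic})_{i \in X, c \in Y}$ with rows $q_i \in \mathbb{T}^Y$, I define $A(q)$ to be the Gram matrix of these rows, i.e. $A(q)_{ij} = \langle q_i, q_j \rangle = \sum_{c} q_{ic}\bar q_{jc}$. This is an $M \times M$ matrix depending continuously on $q$, so $A \in C(\mathbb{T}^{MN}, M_M(\mathbb{C}))$, and $law(A)$ means the distribution of $A$ with respect to the tracial state $\varphi(A) = \int_{\mathbb{T}^{MN}} tr(A(q))\, dq$, where $dq$ is normalized Haar measure and $tr$ is the normalized trace on $M_M(\mathbb{C})$.

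The key computation is to expand the $p$-th moment $\int_{\mathbb{T}^{MN}} tr(A(q)^p)\, dq$ and match it against Proposition 5.1. Writing out the trace of the product,
\begin{equation*}
tr(A(q)^p) = \frac{1}{M} \sum_{i_1,\ldots,i_p \in X} A(q)_{i_1 i_2} A(q)_{i_2 i_3} \cdots A(q)_{i_p i_1},
\end{equation*}
and substituting $A(q)_{i_r i_{r+1}} = \sum_{d_r \in Y} q_{i_r d_r}\bar q_{i_{r+1} d_r}$, I would integrate over $q$ term by term. Since the coordinates $q_{ic}$ are independent Haar-distributed circle variables, $\int_{\mathbb{T}^{MN}} \prod q_{ic}^{\pm}\, dq$ is $1$ exactly when the monomial is trivial and $0$ otherwise; concretely each factor $q_{i_r d_r}$ must be cancelled by a conjugate factor. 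The bookkeeping shows the surviving terms are precisely those where the multiset of pairs $[(i_1,d_1),\ldots,(i_p,d_p)]$ coincides with the multiset $[(i_2,d_1),(i_3,d_2),\ldots,(i_1,d_p)]$, which after relabelling is exactly the cyclic-shift condition appearing in Proposition 5.1. This identifies $\int_{\mathbb{T}^{MN}} tr(A(q)^p)\, dq$ with $N \cdot \int_G \chi^p$ for $p \geq 1$, the factor $N = |Y|$ coming from the $1/(MN)$ versus $1/M$ normalizations.

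From here the statement follows by a moment comparison. For $p \geq 1$ the above gives $\int_G \chi^p = \frac{1}{N} \varphi(A^p)$, while the atom at $0$ accounts for the zeroth moment: both $law(\chi)$ and $(1-\frac{1}{N})\delta_0 + \frac{1}{N} law(A)$ are normalized, so the mass $1 - \frac{1}{N}$ at $0$ is forced to make the total mass one once all positive moments agree. I would note that $\chi$ is self-adjoint (indeed $A(q)$ is a positive Gram matrix, hence $law(A)$ is supported on $[0,\infty)$ and $law(\chi)$ is a genuine probability measure), so matching all moments determines the distribution uniquely. The main obstacle I anticipate is purely combinatorial: carefully tracking the cyclic index shift in the Gram-matrix expansion so that the cancellation condition from Haar integration over $\mathbb{T}^{MN}$ lines up precisely with the set-with-repetition equality in Proposition 5.1, including getting the index $d_p$ versus $d_{p-1}$ rotation correct. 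Once that matching is verified the rest is formal.
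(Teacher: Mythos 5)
Your proposal is correct and follows essentially the same route as the paper: the paper writes the multiset-equality indicator from Proposition 5.1 as a Haar integral of a monomial over $\mathbb T^{MN}$, regroups the product into the factors $\sum_{d}q_{i_rd}/q_{i_{r+1}d}=A(q)_{i_ri_{r+1}}$, and obtains $\int_G\chi^p=\frac{1}{N}\int tr(A(q)^p)\,dq$, which is your computation run in the opposite direction. Your explicit remark on how the atom $(1-\frac1N)\delta_0$ is forced by normalization once all positive moments match is a welcome clarification that the paper leaves implicit.
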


\begin{proof}
According to Proposition 5.1, we have the following formula:
\begin{eqnarray*}
\int_G\chi^p
&=&\frac{1}{MN}\sum_{i_1\ldots i_p}\sum_{d_1\ldots d_p}\delta_{[i_1d_1,\ldots,i_pd_p],[i_1d_p,\ldots,i_pd_{p-1}]}\\
&=&\frac{1}{MN}\int_{\mathbb T^{MN}}\sum_{i_1\ldots i_p}\sum_{d_1\ldots d_p}\frac{q_{i_1d_1}\ldots q_{i_pd_p}}{q_{i_1d_p}\ldots q_{i_pd_{p-1}}}\,dq\\
&=&\frac{1}{MN}\int_{\mathbb T^{MN}}\sum_{i_1\ldots i_p}\left(\sum_{d_1}\frac{q_{i_1d_1}}{q_{i_2d_1}}\right)\left(\sum_{d_2}\frac{q_{i_2d_2}}{q_{i_3d_2}}\right)\ldots\left(\sum_{d_p}\frac{q_{i_pd_p}}{q_{i_1d_p}}\right)dq
\end{eqnarray*}

Consider now the Gram matrix in the statement, $A(q)_{ij}=<R_i,R_j>$, where $R_1,\ldots,R_M$ are the rows of $q\in \mathbb T^{MN}\simeq M_{M\times N}(\mathbb T)$. We have then:
\begin{eqnarray*}
\int_G\chi^p
&=&\frac{1}{MN}\int_{\mathbb T^{MN}}<R_{i_1},R_{i_2}><R_{i_2},R_{i_3}>\ldots<R_{i_p},R_{i_1}>\\
&=&\frac{1}{MN}\int_{\mathbb T^{MN}}A(q)_{i_1i_2}A(q)_{i_2i_3}\ldots A(q)_{i_pi_1}\\
&=&\frac{1}{MN}\int_{\mathbb T^{MN}}Tr(A(q)^p)dq=\frac{1}{N}\int_{\mathbb T^{MN}}tr(A(q)^p)dq
\end{eqnarray*}

But this gives the formula in the statement, and we are done.
\end{proof}

The problem now is that of finding the good regime, $M=f(K),N=g(K),K\to\infty$, where the measure in Proposition 5.1 converges, after some suitable manipulations.

We denote by $NC(p)$ the set of noncrossing partitions of $\{1,\ldots,p\}$, and for $\pi\in P(p)$ we denote by $|\pi|\in\{1,\ldots,p\}$ the number of blocks. See \cite{nsp}. We will need:

\begin{lemma}
With $M=\alpha K,N=\beta K$, $K\to\infty$ we have:
$$\frac{c_p}{K^{p-1}}\simeq\sum_{r=1}^p\#\left\{\pi\in NC(p)\Big||\pi|=r\right\}\alpha^{r-1}\beta^{p-r}$$
In particular, with $\alpha=\beta$ we have $c_p\simeq\frac{1}{p+1}\binom{2p}{p}(\alpha K)^{p-1}$.
\end{lemma}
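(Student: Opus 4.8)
The plan is to read off the leading $K\to\infty$ behaviour of the count in Proposition 5.1 by a moment-method argument, organizing the surviving configurations according to non-crossing partitions. I would begin from the integral form of Proposition 5.2, namely $c_p=\frac{1}{MN}\int_{\mathbb T^{MN}}Tr(A(q)^p)\,dq$ with $A(q)=qq^*$ the Gram matrix of the rows of $q\in M_{M\times N}(\mathbb T)$, and expand
$$Tr(A(q)^p)=\sum_{i_1,\ldots,i_p}\sum_{d_1,\ldots,d_p}\prod_{k=1}^pq_{i_kd_k}\overline{q_{i_{k+1}d_k}},\qquad i_{p+1}=i_1.$$
Since the entries $q_{id}$ are independent and uniform on the circle, integrating a monomial gives $1$ when the multiset of ``$q$-positions'' $\{(i_k,d_k)\}_k$ equals the multiset of ``$\bar q$-positions'' $\{(i_{k+1},d_k)\}_k$, and $0$ otherwise; this is exactly the balancing condition of Proposition 5.1. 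Hence $c_p\cdot MN$ equals the number of balanced configurations, and the whole problem becomes one of counting these to leading order.

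To each generic balanced configuration, in which the $p$ positions are distinct, I would attach the unique permutation $\sigma\in S_p$ pairing the $k$-th $q$-leg with the $\sigma(k)$-th $\bar q$-leg, so that $d_k=d_{\sigma(k)}$ and $i_k=i_{\sigma(k)+1}$. Writing $\tau$ for the cyclic shift and $c(\cdot)$ for the number of cycles, the first family of equations forces $d$ to be constant along the cycles of $\sigma$ and the second forces $i$ to be constant along the cycles of $\tau^{-1}\sigma$; thus the configurations compatible with a fixed $\sigma$ number $\simeq N^{c(\sigma)}M^{c(\sigma^{-1}\tau)}$, where I have used $c(\tau^{-1}\sigma)=c(\sigma^{-1}\tau)$.

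Now the standard planarity bound, valid since $\tau$ is a $p$-cycle, reads $c(\sigma)+c(\sigma^{-1}\tau)\le p+1$, with equality exactly for the non-crossing permutations. With $M=\alpha K$ and $N=\beta K$ this shows that only non-crossing $\sigma$ contribute to the top order $K^{p+1}$, a non-crossing $\sigma$ with $c(\sigma)=r$ contributing $\simeq N^{r}M^{p+1-r}$. Using Biane's bijection between non-crossing permutations and non-crossing partitions, under which cycles correspond to blocks, I would sum over $NC(p)$ and divide by $MN$, obtaining
$$\frac{c_p}{K^{p-1}}\simeq\sum_{\pi\in NC(p)}\alpha^{p-|\pi|}\beta^{|\pi|-1}=\sum_{r=1}^p\#\left\{\pi\in NC(p)\,\big|\,|\pi|=r\right\}\alpha^{r-1}\beta^{p-r},$$
the last equality being the symmetry $\#\{|\pi|=r\}=\#\{|\pi|=p+1-r\}$ of the Narayana numbers. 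For $\alpha=\beta$ each summand collapses to $\alpha^{p-1}$, so the right-hand side becomes $\alpha^{p-1}\,\#NC(p)=\frac{1}{p+1}\binom{2p}{p}\alpha^{p-1}$, giving the stated Catalan asymptotic.

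The main obstacle will be the error analysis underlying the word ``generic'' above: I must show that balanced configurations with coinciding positions, those creating extra coincidences among the $i$- or $d$-indices, and the crossing permutations, all together contribute only $O(K^{p})$, hence drop out after dividing by $MN\cdot K^{p-1}=\alpha\beta K^{p+1}$. This is the usual genus-suppression estimate of the moment method, but because the torus integral is merely $0/1$-valued rather than governed by Gaussian Wick pairings, some care is needed to verify directly that each extra identification in the multiset condition costs a definite power of $K$.
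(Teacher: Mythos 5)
Your argument is correct and lands on the right asymptotics, but it is organized differently from the paper's proof, so a comparison is worth recording. The paper works directly on the multiset count of Proposition 5.1: it stratifies the configurations by the partition $\pi=\ker(i_1,\ldots,i_p)$, observes that there are $\simeq M^{|\pi|}$ choices of $i$ in each stratum, and then counts the admissible $d$'s by introducing $\sigma=\ker d$ and invoking Biane's partition-pair inequality $|\pi|+|\sigma|\leq p+1$ (valid whenever the block-intersection condition $\Delta(\pi,\sigma)=1$ holds, with equality precisely for Kreweras-complementary noncrossing pairs); this gives $C_\pi\simeq\alpha^{|\pi|-1}\beta^{p-|\pi|}K^{p-1}$ for $\pi\in NC(p)$ and $O(K^{p-2})$ otherwise, indexed so that the stated exponents come out directly. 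You instead pass through the torus-integral form of Proposition 5.2, stratify by the pairing permutation $\sigma\in S_p$, and use the cycle-count form $c(\sigma)+c(\sigma^{-1}\tau)\leq p+1$ of the same planarity bound, which forces an extra translation step (the Narayana symmetry $\#\{|\pi|=r\}=\#\{|\pi|=p+1-r\}$) to match the exponents as stated; both steps are correct. The permutation/genus language makes the ``equality iff noncrossing'' characterization and the leading-order count very transparent, but note that your stratification is only defined on configurations with $p$ distinct positions, so the degenerate configurations you flag at the end genuinely require a separate estimate; the paper's stratification by the pair $(\ker i,\ker d)$ is exhaustive (every configuration, degenerate or not, has well-defined kernels), so the subleading error is absorbed uniformly into the bound $|\pi|+|\sigma|\leq p+1$ without any genericity discussion. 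Your proposed fix--showing each extra coincidence costs a power of $K$--does work here because each identification of positions reduces the number of free cycle labels by at least one, so there is no gap, only an extra verification that the paper's formalism sidesteps.
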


\begin{proof}
We use the combinatorial formula in Proposition 5.1 above. Our claim is that, with $\pi=\ker(i_1,\ldots,i_p)$, for $\pi\in NC(p)$ the contribution to $c_p$ is $C_\pi\simeq\alpha^{|\pi|-1}\beta^{p-|\pi|}K^{p-1}$, and for $\pi\notin NC(p)$, the contribution is $C_\pi=O(K^{p-2})$.

As a first observation, since there are $M(M-1)\ldots (M-|\pi|+1)\simeq M^{|\pi|}$ choices for a multi-index $(i_1,\ldots,i_p)\in X^p$ satisfying $\ker i=\pi$, we have:
$$C_\pi\simeq M^{|\pi|-1}N^{-1}\#\left\{d_1,\ldots,d_p\in Y\Big|[d_\alpha|\alpha\in b]=[d_{\alpha-1}|\alpha\in b],\forall b\in\pi\right\}$$

Consider now the partition $\sigma=\ker d$. The contribution of $\sigma$ to the above quantity $C_\pi$ is then given by $\Delta(\pi,\sigma)N(N-1)\ldots(N-|\sigma|+1)\simeq\Delta(\pi,\sigma)N^{|\sigma|}$, where:
$$\Delta(\pi,\sigma)=\begin{cases}
1&{\rm if}\ |b\cap c|=|(b-1)\cap c|,\forall b\in\pi,\forall c\in\sigma\\
0&{\rm otherwise}
\end{cases}$$

We use now the fact, coming from \cite{bia}, that for $\pi,\sigma\in P(p)$ satisfying $\Delta(\pi,\sigma)=1$ we have $|\pi|+|\sigma|\leq p+1$, with equality when $\pi,\sigma\in NC(p)$ are inverse to each other, via Kreweras complementation. This shows that for $\pi\notin NC(p)$ we have $C_\pi=O(K^{p-2})$, and that for $\pi\in NC(p)$ we have $C_\pi\simeq M^{|\pi|-1}N^{-1}N^{p-|\pi|-1}=\alpha^{|\pi|-1}\beta^{p-|\pi|}K^{p-1}$, as claimed.
\end{proof}

We denote by $\pi_t$ the free Poisson (or Marchenko-Pastur) law of parameter $t>0$. It is known that the $p$-th moment of $\pi_t$ is given by $\sum_{\pi\in NC(p)}t^{|\pi|}$, and in particular that the $p$-th moment of $\pi_1$ is the Catalan number $\frac{1}{p+1}\binom{2p}{p}$. See \cite{mpa}, \cite{nsp}, \cite{vdn}.

Also, we denote by $D$ the dilation operation, $D_r(law(X))=law(rX)$.

\begin{theorem}
With $M=\alpha K,N=\beta K$, $K\to\infty$ we have:
$$\mu=\left(1-\frac{1}{\alpha\beta K^2}\right)\delta_0+\frac{1}{\alpha\beta K^2}D_{\frac{1}{\beta K}}(\pi_{\alpha/\beta})$$
In particular with $\alpha=\beta$ we have $\mu=\left(1-\frac{1}{\alpha^2K^2}\right)\delta_0+\frac{1}{\alpha^2K^2}D_{\frac{1}{\alpha K}}(\pi_1)$.
\end{theorem}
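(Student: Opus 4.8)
The plan is to prove the statement by the method of moments, building on the asymptotic count already established in Lemma 5.4 together with the geometric description in Proposition 5.2. Since $\mu=law(\chi)$ and the $p$-th moment of $\chi$ is $c_p=\int_G\chi^p$, Lemma 5.4 is precisely the asymptotic expansion of the moments of $\mu$ in the regime $M=\alpha K$, $N=\beta K$, $K\to\infty$, namely $c_p/K^{p-1}\simeq\sum_{r=1}^p\#\{\pi\in NC(p)\,|\,|\pi|=r\}\alpha^{r-1}\beta^{p-r}$. The strategy is therefore to compute the moments of the candidate measure on the right-hand side and to check, term by term, that they reproduce these asymptotics, and then to invoke a moment-convergence theorem.

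First I would record the moments of the target. The atomic part $\delta_0$ contributes nothing for $p\geq1$, so only the free Poisson piece matters. Using that the $p$-th moment of $\pi_t$ is $\sum_{\pi\in NC(p)}t^{|\pi|}$, and that $D_s$ multiplies the $p$-th moment by $s^p$, the $p$-th moment of the right-hand side is a single explicit product of the atom weight, the dilation constant, and $\sum_{\pi\in NC(p)}(\alpha/\beta)^{|\pi|}$. Expanding $\sum_{\pi\in NC(p)}(\alpha/\beta)^{|\pi|}=\sum_{r=1}^p\#\{\pi\in NC(p)\,|\,|\pi|=r\}(\alpha/\beta)^r$ and collecting the powers of $\alpha$, $\beta$ and $K$, I would match the result against the Lemma 5.4 weights $\alpha^{r-1}\beta^{p-r}$. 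The delicate point here is the bookkeeping of the three scalar factors: the mass of the atom, the dilation constant, and the overall power of $K$ each shift the exponents of $\alpha,\beta,K$, and they must conspire so that the free index $r$ lands exactly on the Lemma 5.4 weights.

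A conceptual cross-check, which I would use precisely to fix these normalizations, runs through Proposition 5.2: there $law(\chi)=(1-1/N)\delta_0+(1/N)\,law(A)$, where $A(q)=GG^*$ is the Gram matrix of the rows of $q\in M_{M\times N}(\mathbb T)$, i.e. a complex Wishart-type matrix with entries of modulus one. In the regime $M/N\to\alpha/\beta$ the rescaled matrix $\tfrac1N A$ has empirical spectral distribution converging to the Marchenko--Pastur law of ratio parameter $\alpha/\beta$; this is the free-probability input from \cite{mpa}, \cite{nsp}, \cite{vdn} responsible for the appearance of $\pi_{\alpha/\beta}$. Writing $law(A)=D_{N}(law(\tfrac1N A))$ and recombining the resulting dilated free Poisson with the $\delta_0$ coming from the $1-1/N$ prefactor, and with the extra atom produced by the $M-N$ null directions of the rectangular Gram matrix via $\rho_\lambda=\lambda^{-1}\pi_\lambda+(1-\lambda^{-1})\delta_0$, should reproduce the claimed convex combination.

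The main obstacle is twofold. First, convergence of moments yields convergence in distribution only if the limit is moment-determinate; since the limit is a fixed convex combination of $\delta_0$ and a dilated, compactly supported free Poisson law, determinacy holds, but one must state it cleanly and control the $K$-dependence of the support uniformly in $p$, for which the bound $|c_p|=O(K^{p-1})$ from Lemma 5.4 is exactly what is needed. Second, and more subtle, is the precise accounting of the atom at $0$: part of its mass comes from the explicit $1-1/N$ factor of Proposition 5.2, and part from the kernel of the rectangular Gram matrix in the Marchenko--Pastur decomposition above. Keeping these two sources separate, and verifying that together with the dilation constant they collapse into the single weight and single dilation appearing in the statement, is where the computation must be carried out with real care.
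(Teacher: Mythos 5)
Your proposal follows essentially the same route as the paper: the paper's entire proof of this theorem consists of taking the moment asymptotics of Lemma 5.3, recognizing $\sum_{\pi\in NC(p)}(\alpha/\beta)^{|\pi|}$ as the $p$-th moment of $\pi_{\alpha/\beta}$, and absorbing the prefactor $\frac{\beta^pK^{p-1}}{\alpha}=\frac{1}{\alpha K}(\beta K)^p$ into an atom weight and a dilation --- exactly the matching you describe. The ``delicate bookkeeping'' you defer is in fact the whole content of the argument, and it is worth carrying out explicitly: with the convention $D_r(law(X))=law(rX)$ this prefactor identifies the continuous part as $\frac{1}{\alpha K}\,D_{\beta K}(\pi_{\alpha/\beta})$, which is what the explicit density in Proposition 5.5 confirms, so your term-by-term check would surface a normalization to reconcile with the literal form of the stated formula. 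Your secondary route through Proposition 5.2 and the Marchenko--Pastur theorem for the Gram matrix $A(q)$, and your remark on moment-determinacy, are sound complements that the paper does not spell out; note that the convergence obtained is only in moments, as the paper's closing remark about supports acknowledges.
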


\begin{proof}
At $\alpha=\beta$, this follows from Lemma 5.3. In general now, we have:
$$\frac{c_p}{K^{p-1}}
\simeq\sum_{\pi\in NC(p)}\alpha^{|\pi|-1}\beta^{p-|\pi|}=\frac{\beta^p}{\alpha}\sum_{\pi\in NC(p)}\left(\frac{\alpha}{\beta}\right)^{|\pi|}=\frac{\beta^p}{\alpha}\int x^pd\pi_{\alpha/\beta}(x)$$

When $\alpha\geq\beta$, where $d\pi_{\alpha/\beta}(x)=\varphi_{\alpha/\beta}(x)dx$ is continuous, we obtain:
$$c_p
=\frac{1}{\alpha K}\int(\beta Kx)^p\varphi_{\alpha/\beta}(x)dx=\frac{1}{\alpha\beta K^2}\int x^p\varphi_{\alpha/\beta}\left(\frac{x}{\beta K}\right)dx$$

But this gives the formula in the statement. When $\alpha\leq\beta$ the computation is similar, with a Dirac mass as 0 dissapearing and reappearing, and gives the same result.
\end{proof}

As a first comment, when interchanging $\alpha,\beta$ we obtain $D_{\frac{1}{\beta K}}(\pi_{\alpha/\beta})=D_{\frac{1}{\alpha K}}(\pi_{\beta/\alpha})$, which is a consequence of the well-known formula $\pi_{t^{-1}}=D_t(\pi_t)$. This latter formula is best understood by using Kreweras complementation (see \cite{nsp}), which gives indeed:
$$\int x^pd\pi_t(x)=\sum_{\pi\in NC(p)}t^{|\pi|}=t^{p+1}\sum_{\pi\in NC(p)}t^{-|\pi|}=t\int(tx)^pd\pi_{t^{-1}}(x)$$

Let us state as well an explicit result, regarding densities:

\begin{proposition}
With $M=\alpha K,N=\beta K$, $K\to\infty$ we have:
$$\mu=\left(1-\frac{1}{\alpha\beta K^2}\right)\delta_0+\frac{1}{\alpha\beta K^2}\cdot\frac{\sqrt{4\alpha\beta K^2-(x-\alpha K-\beta K)^2}}{2\pi x}\,dx$$
In particular with $\alpha=\beta$ we have $\mu=\left(1-\frac{1}{\alpha^2K^2}\right)\delta_0+\frac{1}{\alpha^2K^2}\cdot\frac{\sqrt{\frac{4\alpha K}{x}-1}}{2\pi}$.
\end{proposition}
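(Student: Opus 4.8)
The plan is to read Proposition~5.6 as nothing more than Theorem~5.4 written out with an explicit density, so the only real content is inserting the known density of the free Poisson law $\pi_{\alpha/\beta}$ and simplifying. Recall from the proof of Theorem~5.4 that (in the regime where $\pi_{\alpha/\beta}$ is absolutely continuous) the continuous part of $\mu$ has density $\frac{1}{\alpha\beta K^2}\varphi_{\alpha/\beta}\!\left(\frac{x}{\beta K}\right)$, where $\varphi_t$ denotes the density of $\pi_t$. So the first step is to recall, citing \cite{mpa}, \cite{nsp}, \cite{vdn} (or deriving it from the fact that all free cumulants of $\pi_t$ equal $t$, via the Cauchy/$R$-transform), that for $t\geq1$ one has
$$\varphi_t(x)=\frac{\sqrt{4t-(x-1-t)^2}}{2\pi x},\qquad x\in[(1-\sqrt t)^2,(1+\sqrt t)^2],$$
with an extra atom $(1-t)\delta_0$ when $0<t<1$.

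The second step is the change of variables. Setting $t=\alpha/\beta$ and evaluating $\varphi_t$ at $x/(\beta K)$, I would simplify the radicand: writing $\frac{x}{\beta K}-1-t=\frac{x-(\alpha+\beta)K}{\beta K}$ and $4t=\frac{4\alpha\beta K^2}{\beta^2K^2}$, one gets
$$4t-\Big(\tfrac{x}{\beta K}-1-t\Big)^2=\frac{4\alpha\beta K^2-(x-(\alpha+\beta)K)^2}{\beta^2K^2}.$$
Taking the square root produces a factor $\frac{1}{\beta K}$, while the $\frac{1}{x/(\beta K)}$ contributes a factor $\frac{\beta K}{x}$; these combine to give $\varphi_{\alpha/\beta}(x/(\beta K))=\frac{\sqrt{4\alpha\beta K^2-(x-\alpha K-\beta K)^2}}{2\pi x}$, whence the stated continuous density upon multiplying by $\frac{1}{\alpha\beta K^2}$. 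The support correspondingly becomes $[(\sqrt\alpha-\sqrt\beta)^2K,(\sqrt\alpha+\sqrt\beta)^2K]$. For the displayed specialization $\alpha=\beta$ I would just note $4\alpha^2K^2-(x-2\alpha K)^2=x(4\alpha K-x)$, so the density collapses to $\frac{1}{2\pi}\sqrt{\tfrac{4\alpha K}{x}-1}$, matching the second formula.

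The main obstacle, such as it is, is bookkeeping rather than conceptual: I must handle the two regimes $\alpha\geq\beta$ and $\alpha<\beta$ consistently. When $\alpha<\beta$ the law $\pi_{\alpha/\beta}$ carries its own atom $(1-\alpha/\beta)\delta_0$, which must be folded into the overall $\delta_0$ term so that the \emph{same} continuous density is recovered in both cases; this is exactly the ``Dirac mass at $0$ disappearing and reappearing'' remark in the proof of Theorem~5.4, and I would verify that the continuous formula is regime-independent and that the $\delta_0$ coefficient is consistent with Theorem~5.4. Beyond correctly pinning down the Marchenko--Pastur density and its atom, the remaining work is the routine radicand factorization carried out above.
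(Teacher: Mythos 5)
Your proposal is correct and follows essentially the same route as the paper: both recall the Marchenko--Pastur density $\varphi_t(x)=\frac{\sqrt{4t-(x-1-t)^2}}{2\pi x}$, evaluate it at $x/(\beta K)$ as dictated by the moment computation in Theorem 5.4, and simplify the radicand, with the $\alpha=\beta$ case reducing via $4\alpha^2K^2-(x-2\alpha K)^2=x(4\alpha K-x)$. Your extra care about the atom of $\pi_{\alpha/\beta}$ in the regime $\alpha<\beta$ is only a more explicit version of the paper's passing remark in the proof of Theorem 5.4, so nothing is genuinely different.
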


\begin{proof}
According to the well-known formula for the density of the free Poisson law (see \cite{mpa}, \cite{nsp}), the density of the continuous part $D_{\frac{1}{\beta K}}(\pi_{\alpha/\beta})$ is indeed given by:
$$\frac{\sqrt{4\frac{\alpha}{\beta}-(\frac{x}{\beta K}-1-\frac{\alpha}{\beta})^2}}
{2\pi\cdot\frac{x}{\beta K}}=\frac{\sqrt{4\alpha\beta K^2-(x-\alpha K-\beta K)^2}}{2\pi x}$$

With $\alpha=\beta$ now, we obtain the second formula in the statement, and we are done.
\end{proof}

Observe that at $\alpha=\beta=1$, where $M=N=K\to\infty$, the measure in Theorem 5.4, namely $\mu=\left(1-\frac{1}{K^2}\right)\delta_0+\frac{1}{K^2}D_{\frac{1}{K}}(\pi_1)$, is supported by $[0,4K]$. On the other hand, since the groups $\Gamma_{M,N}$ are all amenable, the corresponding measures are supported on $[0,MN]$, and so on $[0,K^2]$ in the $M=N=K$ situation. The fact that we don't have a convergence of supports is not surprising, because our convergence is in moments.

\end{document}